\documentclass[a4paper,12pt]{amsart}
\usepackage[colorlinks,linkcolor=blue,anchorcolor=Periwinkle,
    citecolor=blue,urlcolor=Emerald]{hyperref}
\usepackage[all]{xy}
\usepackage{stmaryrd}
\usepackage{hyperref}
\usepackage{mathabx}
\usepackage{cite}
\usepackage{qtree}
\usepackage{amssymb}
\usepackage{amsthm}
\usepackage{amsmath} 
\usepackage{mathtools}
\usepackage{graphicx}
\usepackage{adjustbox}
\usepackage{xcolor}
\hypersetup{colorlinks=false,allbordercolors=blue,pdfborderstyle={/S/U/W 1}}
\usepackage{epsfig}
\usepackage{indentfirst}
\usepackage[usenames,dvipsnames]{pstricks}
\usepackage{mathrsfs}
\usepackage{tikz}
\usetikzlibrary{arrows.meta}
\SelectTips{cm}{}
\usepackage[mathscr]{euscript}
\usepackage{listings}
\usepackage{young}
\usepackage{ytableau}
\usepackage{enumitem}
\usetikzlibrary{arrows}
\usepackage{indentfirst}
\usepackage[usenames,dvipsnames]{pstricks}
\usepackage{pst-grad}
\usepackage{pst-plot}
\usepackage{mathrsfs}
\usepackage{subcaption}
\usepackage{youngtab}
\usepackage{tikz-cd}
\usepackage{ifthen}
\usepackage{color}
\usepackage[normalem]{ulem}
\usetikzlibrary{arrows}
\usepackage{quiver}
\usetikzlibrary{automata}
\usepackage{ragged2e}
\usepackage{bbm}
\usepackage{tikz}
\usetikzlibrary{arrows.meta}

\newtheorem{theorem}{Theorem}[section]
\newtheorem{corollary}[theorem]{Corollary}
\newtheorem{definition}[theorem]{Definition}
\newtheorem{example}[theorem]{Example}
\newtheorem{lemma}[theorem]{Lemma}

\newtheorem{proposition}[theorem]{Proposition}

\newtheorem{question}[theorem]{Question}
\newtheorem{remark}[theorem]{Remark}

\newtheorem{conjecture*}[theorem]{Conjecture}

\newtheorem{innercustomgeneric}{\customgenericname}
\providecommand{\customgenericname}{}
\newcommand{\newcustomtheorem}[2]{%
  \newenvironment{#1}[1]
  {%
   \renewcommand\customgenericname{#2}%
   \renewcommand\theinnercustomgeneric{##1}%
   \innercustomgeneric
  }
  {\endinnercustomgeneric}
}

\newcustomtheorem{customtheorem}{\bf Theorem}
\newcustomtheorem{customlemma}{Lemma}

\DeclareMathOperator{\Hom}{Hom}
\DeclareMathOperator{\Ext}{Ext}
\DeclareMathOperator{\Tor}{Tor}
\DeclareMathOperator{\End}{End}
\DeclareMathOperator{\im}{im}
\DeclareMathOperator{\pd}{pd}
\DeclareMathOperator{\id}{id}
\DeclareMathOperator{\fp}{FP}
\DeclareMathOperator{\CF}{CF}
\DeclareMathOperator{\IF}{IF}
\DeclareMathOperator{\F}{F}
\DeclareMathOperator{\Mod}{Mod}
\DeclareMathOperator{\Add}{Add}

\DeclareMathOperator{\Prod}{Prod}

\DeclareMathOperator{\Gen}{Gen}

\newcommand{\eat}[1]{}

\makeatletter
\DeclareRobustCommand*\cal{\@fontswitch\relax\mathcal}
\makeatother

\begin{document} 


\title[Pure projective tilting and Goresntein pure projective tilting modules]{Pure projective tilting modules associated with a special ring and Gorenstein properties}
\author{Umamaheswaran Arunachalam}
\address{SASTRA Deemed to be University, Thirumalaisamudram, Thanajvur
}
\address{Institut des Hautes Études Scientifiques,
   Université Paris-Saclay, 
   Le Bois-Marie,  
   35 route de Chartres  CS 40001, 
   91893 Bures-sur-Yvette, France.}
\email{umamaheswaran@maths.sastra.edu, arunachalam@ihes.fr}
\urladdr{https://sites.google.com/site/ruthreswaranuma/home}

\keywords{Pure projective tilting module, von Neumann regular ring, Grothendieck Category, Gorenstein pure projective tilting module} 
\subjclass[2020]{16D40, 16D60 16E30, 16E50, 16E60}

\begin{abstract}
In this paper, we study pure-projective tilting modules and related classes of rings. We introduce the notion of a pure-tilting hereditary ring, namely, a ring over which every ideal is pure-projective tilting, and investigate its structural properties. We prove that $R$ is a pure-tilting hereditary ring if and only if $R$ is hereditary noetherian over a von Neumann regular ring $R$. In the commutative case, we show that $R$ is a pure $1$-tilting hereditary  ring precisely when $R$ is hereditary noetherian.  Using Kaplansky’s conjecture, we establish a connection between pure-tilting hereditary rings and the hereditary Noetherian property of prime factor rings. In the category theory, for the torsion pair $(\mathcal T, \mathcal F)=(\operatorname{Gen}(I), I^{\perp})$ in $R\text{-}\Mod$, we establish that the associated Happel--Reiten--Smal\o heart $\mathcal H_I$ is a Grothendieck category. We also examine the characterize $\Ext$-orthogonal classes determined by pure projective tilting modules. In addition, we show that every Gorenstein pure projective tilting module is Gorenstein flat if and only if every Gorenstein pure projective tilting module is strict $\mathscr{T}$-stationary, where $\mathscr{T}$ denotes the class of all finitely presented tilting modules. These results establish new links between tilting theory, hereditary ring conditions, and Gorenstein homological structures.
\end{abstract}

\maketitle

\section{Introduction}	
Throughout this paper, $R$ denotes an associative ring with identity, and all modules are assumed to be left $R$-modules unless explicitly stated otherwise. The category of left $R$-modules is denoted by $R$-$\Mod.$

A module is said to be pure-projective if it is projective with respect to pure-
exact sequences (See Preliminaries). According to Warfield’s criteria \cite[Corollary 3]{WAR:1969}, a module is pure-projective if and only if it is a direct summand of a direct sum of finitely presented modules. However, a pure-projective module need not itself decompose as a direct sum of finitely presented modules. This naturally leads to the question of identifying classes of rings over which such a decomposition property does hold. In \cite{PP:2005}, Puninski and Rothmaler investigated the broader problem of determining when every pure-projective module is a direct sum of finitely generated modules. In particular, they proved that if $R$ is a hereditary Noetherian ring, then every pure-projective left (respectively, right) $R$-module is a direct sum of finitely generated left (respectively, right) $R$-modules \cite[Corollary 6.5]{PP:2005}. Earlier, Warfield showed that over a commutative complete Noetherian local ring, every pure-projective module decomposes as a direct sum of finitely presented modules \cite[Corollary 4]{WAR:1969}. Motivated by these results, the purpose of this paper is to extend such decomposition theorems to broader classes of rings.

Pure-projective tilting modules were studied by Bazzoni et al. in \cite{BH}. In that work, they addressed the pure-projective version of a question raised by Saorín, namely: for which rings R is every pure-projective 1-tilting module classical? Their results are fundamental to our approach and will be used in the proof of Theorem~\ref{2.2}, one of the main results of this paper.

 Recall that a ring $R$ is hereditary whenever every ideal of $R$ is projective. In a similar spirit, one may call $R$ pure hereditary in \cite{MD:2017} provided every ideal of $R$ is pure-projective. Motivated by this analogy, we introduce the notion of a pure-tilting hereditary ring, namely, a ring for which every ideal is pure-projective tilting.

 By \cite[Exercise 4.22, p.~161]{LM:1999}, a commutative ring $R$ is coherent and perfect if and only if $R$ is Artinian. Moreover, Remark~\ref{3.8.R} provides an example showing that a \textit{pure-tilting hereditary ring} need not be semisimple because a commutative von Neumann regular ring which is not Artinian cannot be pure-tilting hereditary. This establishes the existence of a nontrivial class of pure-tilting hereditary rings properly containing the class of semisimple rings. Consequently, we obtain the following hierarchy of implications and structural relationships among these classes of rings. In the following diagram, all the arrows represent inclusions.
\begin{center}
\[\xymatrix@C-0.15pc@R-.18pc{
&  & {\{\text{\small Semisimple ring}\}} \ar[d] \ar@/{}^{-1pc}/[ld] \ar[d]\ar@/{}^{1pc}/[rd]& & &\\ 
  &  {\{\text{\small Hereditary ring}\}} \ar@/{}^{0.4pc}/[rdd]\ar[rd] & \boxed{\color{red} \{\text{\small Pure-tilting hereditary ring}\}} \ar@/{}^{-0.0pc}/[d]  & {\{\text{\small Noetherian ring}\}}\ar[ld]\ar@/{}^{-0.4pc}/[ldd]&&\\
&   & {\{\text{\small Pure hereditary ring}\}} \ar[d] & & &\\
 && {\{\text{\small Coherent ring}\}}& & 
}\]
\end{center}

 Note that every pure projective module is a direct summand of a direct sum of of finitely presented modules. Every finitely presented module is a pure projective tilting module but converse not necessarily true. It follows that all $\mathscr{T}$-injective modules are $\fp$-injective modules and converse not necessarily true. Over a coherent ring all finitely presented modules are equal to all pure projective tilting modules. So $\fp$-injective modules coincides with the $\mathscr{T}$-injective modules over a coherent ring.

\begin{question}
Is every pure-tilting hereditary ring hereditary noetherian?
\end{question}

We provide an affirmative answer in two important special cases. The first is the case of von Neumann regular rings in the non-commutative setting, where the conclusion follows from Proposition~\ref{2.3}. The second occurs when R is a commutative pure $1$-tilting hereditary ring.  These results indicate that the pure-tilting hereditary condition imposes strong structural constraints, often forcing hereditary noetherian behavior in a wide range of contexts. This observation motivates a more detailed investigation of the relationship between flat modules and pure projective modules, leading to the following main results.

\begin{customtheorem}{\ref{2.3}}\label{MAIN-1}
 Let $R$ be a von Neumann regular ring. Then $R$ is a  pure-tilting hereditary ring if and only if $R$ is hereditary noetherian. 
 \end{customtheorem}

To complement the previous result, we next consider the commutative setting. In this case, the structure of pure $1$-tilting hereditary rings is more rigid due to the stronger interplay between purity and finiteness conditions in commutative module categories.

\begin{customtheorem}{\ref{2.2}}
 Let $R$ be a commutative ring. Then $R$ is a  pure $1$-tilting hereditary ring if and only if $R$ is hereditary Noetherian. 
\end{customtheorem}

A central aspect in the study of torsion pairs arising from the tilting theory is the structure of the associated Happel--Reiten--Smal{\o} (RHS) heart. In particular, determining when this heart forms a Grothendieck category is important, since Grothendieck categories provide a natural framework for studying exactness properties and direct limits. In the present setting, this question is closely related to the structural properties of the induced torsion pair. The study of pure $1$-tilting hereditary rings naturally leads to the examination of the torsion theory determined by the corresponding pure projective tilting module. In particular, it is important to understand the definability properties of the torsion-free class, the finiteness conditions of the induced torsion pair, and the structure of the associated heart. We summarize these properties in the following theorem.

\begin{customtheorem}{\ref{GROTH}}
Let $R$ be a pure 1-tilting hereditary ring. Suppose $(\mathcal T,\mathcal F)=(\operatorname{Gen}(I),I^{\perp})$ is the induced torsion pair in the Grothendieck category $R\text{-}\Mod$. Then:
\begin{enumerate}
    \item the class  $I^{\perp} = \{M\in R\text{-Mod}\mid \operatorname{Ext}_R^1(I,M)=0\}$
    is a definable subcategory of $R\text{-}\Mod$;
    \item the torsion pair $(\mathcal T,\mathcal F)$ is of finite type;
    \item the associated Happel--Reiten--Smal\o{} heart $\mathcal H_I$ is a Grothendieck category.
\end{enumerate}
\end{customtheorem}

The following result establishes a characterization of Gorenstein flatness for Gorenstein pure projective tilting modules in terms of strict stationarity with respect to the class of finitely presented tilting modules. The theorem below shows that, within the class of Gorenstein pure projective tilting modules, the Gorenstein flat property can be completely detected through this stationarity condition. Thus, it gives a homological criterion that connects Gorenstein flatness with the structural properties arising from tilting theory.

\begin{customtheorem}{\ref{MAIN:GOREN}}
Let $\mathscr{T}$ be the class of all finitely presented tilting modules. Then every Gorenstein pure projective tilting module is Gorenstein flat if and only if every Gorenstein pure projective tilting module is a strict $\mathscr{T}$-stationary module. 
\end{customtheorem}

We conclude this section by providing a concise summary of the paper's concepts. Section 2 contains necessary notation and definitions required for the subsequent sections. 

Section 3 is devoted to the introduction and study of pure-tilting hereditary rings. In this section, we establish several equivalent characterizations of these rings and investigate their fundamental properties. We prove that a ring $R$ is pure hereditary if and only if $R$ is Noetherian over a $\CF$-ring. We further show that, for a von Neumann regular ring $R$, the ring $R$ is pure-tilting hereditary if and only if it is hereditary Noetherian. We also consider semiregular rings and prove that, for such a ring $R$, the conditions that $R$ is Noetherian and that $R/J(R)$ is pure-tilting hereditary are equivalent to $R$ being Artinian and perfect. In addition, we establish that a ring $R$ is semisimple if and only if it is an $\IF$-ring and hereditary Noetherian, provided $R$ is pure-tilting hereditary. 

In Section 4, we investigate pure $1$-tilting hereditary ring with the commutative setting and analyze the framework of category theory. We prove that a ring $R$ is pure $1$-tilting hereditary if and only if $R$ is semisimple. Finally, we show that a ring $R$ is a Dedekind domain if and only if it is a pure $0$-tilting hereditary ring if and only if it is a pure-tilting hereditary ring. In Category theory, we assume that $R$ is a pure $1$-tilting hereditary ring. Suppose $(\mathcal T,\mathcal F)=(\operatorname{Gen}(I),I^{\perp})$ is the induced torsion pair in the Grothendieck category $R\text{-}\Mod$. Then we prove the main Theorem \ref{GROTH} of the section. Moreover, we prove that the induced torsion pair $({\rm Gen}(I),I^{\perp})$ is a tilting torsion pair of finite type in the category $R\text{-Mod}$. Finally, for every $1$-tilting ideal $I$, the heart $\mathcal H_t$ of the $t$-structure in $\mathcal D(\Mod\text{-}R)$ induced by the torsion pair $(\Gen(I),\mathcal F)$ is a Grothendieck category.

 In Section 5, we investigate the $\Ext$-right orthogonal class of pure projective tilting modules over pure-tilting hereditary ring. Let $\mathcal{P}$ denote the class of all pure projective $R$-modules. We first prove that over a Noetherian ring, a module is injective if and only if $\mathcal{P}$-injective. We also show that over an arbitrary ring, every cotorsion module is $\mathscr{T}$-injective if and only if all pure projective tilting modules are projective, and all $R$-modules are $\mathscr{T}$-injective, equivalently, every $R$-module is $\mathscr{T}$-injective. Moreover, the class $\mathscr{T}^{\bot}$ of all $\mathscr{T}$-injective modules is closed under pure submodules. We further prove that every $R$-module has an $\mathscr{T}$-injective preenvelope over a  pure-tilting hereditary ring. The main result of this section establishes that the following conditions are true  over a  pure-tilting hereditary ring $R$. (i) Every factor module of an $\mathscr{T}$-injective module by an absolutely pure submodule is $\mathscr{T}$-injective, (ii) Every absolutely pure module is $\mathscr{T}$-injective, (iii) every absolutely pure module has injective dimension at most $1.$

In Section 6, we introduce and study the notion of Gorenstein pure-projective tilting modules. We establish the equivalent conditions characterizing the relationship between Gorenstein projective modules and Gorenstein pure-projective tilting modules over simple Artinian rings. The section culminates with the proof of Theorem~\ref{MAIN:GOREN}, which constitutes the main result of this section.

\section{Preliminaries}

In this section, we collect several known definitions and terminological conventions that will be used throughout the remainder of the paper.

Recall that an exact sequence $0 \rightarrow A \rightarrow B \rightarrow C \rightarrow 0$ of left $R$-modules is said to be pure exact if and only if for every finitely presented $R$-modules $M,$ the induced sequence $0 \rightarrow \Hom_R(M, A) \rightarrow \Hom_R(M, B) \rightarrow \Hom_R(M, c) \rightarrow 0$ is exact. A left $R$-module $M$ is said to be pure-projective if it is projective with respect to pure exact sequences.

 \begin{lemma}\label{2.1}
 Let $M$ be an $R$-module. Then the following conditions are equivalent:
 \begin{enumerate}
     \item [(1)] $M$ is pure projective;
     \item [(2)] Every pure exact sequence $0 \rightarrow A \rightarrow B \rightarrow M \rightarrow 0$ of $R$-modules splits;
     \item [(3)] $M$ is a direct summand of a direct sum of finitely presented $R$-modules.
 \end{enumerate}
 \end{lemma}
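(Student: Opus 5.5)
We prove the cycle (1)$\Rightarrow$(2)$\Rightarrow$(3)$\Rightarrow$(1). The main tool is a pure epimorphism onto $M$ from a direct sum of finitely presented modules.

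\emph{(1)$\Rightarrow$(2).} Let $0\to A\to B\xrightarrow{g} M\to 0$ be pure exact. Since $M$ is projective with respect to pure exact sequences, the identity $1_M$ lifts along $g$. This gives $s\colon M\to B$ with $gs=1_M$, so the sequence splits.

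\emph{(2)$\Rightarrow$(3).} First we construct the epimorphism. Let $\Lambda$ be the set of all pairs $(F,f)$ with $F$ running over a set of representatives of isomorphism classes of finitely presented modules and $f\in\Hom_R(F,M)$. This is a set, since finitely presented modules are cokernels of maps $R^m\to R^n$. Put $P=\bigoplus_{(F,f)\in\Lambda}F$ and let $\pi\colon P\to M$ be induced by the maps $f$. Then $\pi$ is surjective, because every cyclic submodule $Rx\subseteq M$ is the image of $R\to M$, $1\mapsto x$, and $R$ is finitely presented.

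Next we check that $0\to\ker\pi\to P\xrightarrow{\pi} M\to 0$ is pure exact. By the definition of purity recalled above, it suffices that every $h\colon N\to M$ with $N$ finitely presented factors through $\pi$. We may take $N$ to be the chosen representative of its class. Then $(N,h)\in\Lambda$, and the coprojection of $N$ into $P$ composed with $\pi$ equals $h$.

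Finally, hypothesis (2) splits this sequence, so $M$ is a direct summand of $P$, which is a direct sum of finitely presented modules.

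\emph{(3)$\Rightarrow$(1).} Suppose $M\oplus M'=\bigoplus_i F_i$ with each $F_i$ finitely presented. Let $g\colon B\to C$ be the epimorphism of a pure exact sequence and $u\colon M\to C$ any map. Extend $u$ by zero on $M'$. Its restriction to each $F_i$ lifts through $g$, by the definition of purity, since $F_i$ is finitely presented. These lifts assemble into a lift on $\bigoplus_i F_i$ by the universal property of the coproduct, and restricting to $M$ lifts $u$. Hence $M$ is pure projective.

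The main point needing care is the set-theoretic bookkeeping in (2)$\Rightarrow$(3): one must index over representatives of finitely presented modules so that $P$ is a genuine set-indexed direct sum. Given that, the purity check reduces directly to the definition.
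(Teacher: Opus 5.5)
Your proof is correct and complete. The paper gives no argument for this lemma; it simply quotes it as Warfield's criterion \cite[Corollary 3]{WAR:1969}, as noted in the introduction. Your cycle is exactly the standard proof of that criterion: lift $1_M$, split the canonical pure epimorphism $\bigoplus_{(F,f)\in\Lambda}F\to M$ using (2), and assemble the lifts over the coproduct. The only implicit step is replacing a finitely presented $N$ by its chosen representative via an isomorphism $\phi\colon N\to N_0$, so that $h=\pi\circ\iota_{(N_0,h\phi^{-1})}\circ\phi$, and this is routine.
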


Given a class $\mathscr{C}$ of left $R$-modules, we write

\[
\mathscr{C}^{\perp}
=
\left\{
N \in R\text{-}\mathrm{Mod}
\;\middle|\;
\Ext_R^{i}(M,N)=0
\text{ for all } M\in \mathscr{C}
\right\},
\]
\[
{}^{\perp}\mathscr{C}
=
\left\{
N \in R\text{-}\mathrm{Mod}
\;\middle|\;
\Ext_R^{i}(N,M)=0
\text{ for all } M\in \mathscr{C}
\right\}.
\]

%

Let $\mathscr{C}$ be a class of left $R$-modules. Following \cite{Eno1}, 
we say that a map $f \in \Hom_{R}(C, M)\mbox{ with }C \in \mathscr{C}$ is a $\mathscr{C}$-precover of $M$, if the group homomorphism \[\Hom_{R}(C^\prime, f) \colon \Hom_{R}(C^\prime, C) \rightarrow \Hom_{R}(C^\prime, M)\] is surjective for each $C^\prime$ $\in \mathscr{C}$. A $\mathscr{C}$-precover $f \in \Hom_{R}(C, M)$ of $M$ is called a $\mathscr{C}$-cover of $M$ if $f$ is right minimal. That is, if $fg = f$ implies that $g$ is an automorphism for each $g \in$ $\End_{R}(C)$. $\mathscr{C} \subseteq$ $R\mbox{-}\Mod$ is a precovering class (covering class) provided that each module has a $\mathscr{C}$-precover ($\mathscr{C}$-cover). Dually, we have the definition of $\mathscr{C}$ preenvelope ($\mathscr{C}$ envelope).

Let $\mathcal{A}, \mathcal{B} \subseteq R$-$\Mod$. The pair $(\mathcal{A}, \mathcal{B})$ is called a \textit{cotorsion pair} \cite{RGT:2006}, if $\mathcal{A} =\,^\perp\mathcal{B}$ and $\mathcal{A}^\perp =\mathcal{B}$

An $R$-module $T$ is called a $n$-tilting \cite{RGT:2006} provided that 
    \begin{enumerate}
        \item $\pd_R(T) \leq n$
        \item $\Ext_R^i(T, T^(\kappa)) = 0$ for all $1 \leq i \leq \omega$ and for all cardinals $\kappa.$
        \item There are $r \geq 0$ and a long exact sequence $0 \rightarrow R \rightarrow T_0 \rightarrow \cdots \rightarrow T_r \rightarrow 0,$ where $T_i \in \Add (T) $ for all $i \leq r.$
    \end{enumerate}
    
 A \textit{classical tilting} module is a tilting module having a projective resolution by finitely generated projectives. 

 Let $\mathcal{C}$ be a class of modules. Then $\mathcal{C}$ is \textit{definable} if $\mathcal{C}$ is closed under direct limits, direct products, and pure submodules.

 Recall that a ring $R$ is said to be left hereditary (resp. left semihereditary) if every left ideal (resp. finitely generated left ideal) of $R$ is projective. A ring $R$ is semiperfect if every finitely generated left (or right) $R$-module has a projective cover or, equivalently, if $R/J(R)$ is semisimple and the idempotents of $R/J(R)$ can be lifted to $R$. Also, a ring $R$ is called left perfect if every left $R$-module has a projective cover, or, equivalently, if $R/J(R)$ is semisimple and $J(R)$ is left $T$-nilpotent.

 The category $\mathscr{A}$ is \textit{cocomplete} \cite{BOS:1975} if $\lim\limits_{\longrightarrow}F$ \textit{(colimit or inductive limit)} exists for every $F \colon \mathscr{I} \rightarrow \mathscr{A},$ where $\mathscr{I}$ is a small category. A cocomplete abelian category $\mathscr{G}$ is called a \textit{Grothendieck category} if direct limits are exact in $\mathscr{G}$ and $\mathscr{G}$ has a generator. 

 A module is \textit{Gorenstein projective} if it occurs as a syzygy in an acyclic complex 
 \begin{equation}
   \mathrm{P}_\bullet \colon \cdots \rightarrow P_1 \rightarrow P_0 \rightarrow P^0 \rightarrow P^1 \cdots 
\end{equation}
 of projective modules that remains exact after applying the functor $\Hom_R(-, P)$ for every projective $R$-module $P.$ Similarly, an $R$-module is called \textit{Gorenstein flat} if it occurs as a syzygy in an acyclic complex $(2.1)$ of flat $R$-modules that remains exact after tensoring with an arbitrary injective right $R$-module.

\begin{proposition} \cite[Proposition ~1.5]{ET:2011} \label{P1}
Let $M,N$ be two left $R$-modules and fix a non-negative integer $n$. Then the following conditions are equivalent.
\begin{enumerate}
    \item[(i)] The natural map
    \[
    \Phi_M^{(n)} :
    \operatorname{Tor}^{R}_{n}(\operatorname{Hom}(N,D),M)
    \longrightarrow
    \operatorname{Hom}(\operatorname{Ext}^{n}_{R}(M,N),D)
    \]
    is injective for any divisible adelian group $D$.

    \item[(ii)] The $n$th syzygy module of $M$ is strict Mittag--Leffler over $N$.
\end{enumerate}
\end{proposition}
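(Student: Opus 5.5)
The plan is to reduce the statement, by dimension shifting, to the case $n=0$. In that case it becomes a purely "tensor versus double dual" criterion for strict Mittag--Leffler modules. For a left $R$-module $X$, the right module $\Hom(N,D)$ and an abelian group $D$, consider the natural map
\[
\nu_X:\ \Hom(N,D)\otimes_R X\longrightarrow \Hom_{\mathbb Z}(\Hom_R(X,N),D),\qquad f\otimes x\mapsto\bigl(g\mapsto f(g(x))\bigr).
\]
When $n=0$, $\Phi^{(0)}_M=\nu_M$ and the zeroth syzygy is $M$ itself. So the first step is to prove that $X$ is strict Mittag--Leffler over $N$ if and only if $\nu_X$ is injective for every divisible group $D$.

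For this equivalence I would write $X=\varinjlim F_i$ as a direct limit of finitely presented modules. For each $F_i$ the map $\nu_{F_i}$ is an isomorphism: this holds for $F_i=R$, extends to finite direct sums, and then to finitely presented modules by the five lemma, using that $\Hom(-,D)$ is exact because $D$ is divisible. An element of $\Hom(N,D)\otimes X$ comes from some $\Hom(N,D)\otimes F_i$. Its image under $\nu_X$ vanishes exactly when the corresponding functional on $\Hom(F_i,N)$ vanishes on the image of the restriction map $\Hom(X,N)\to\Hom(F_i,N)$. Strict Mittag--Leffler over $N$ says that this image equals the image of $\Hom(F_j,N)\to\Hom(F_i,N)$ for a suitable factorization $F_i\to F_j$ through $X$. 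Comparing the two descriptions of the kernel gives one direction. For the converse, I would choose $D$ to be an injective cogenerator of abelian groups, such as $\mathbb Q/\mathbb Z$ or a large product of copies of it, so that a proper inclusion of images can be detected by a functional that kills the smaller image but not the larger one. I expect this step to be the main obstacle, since it requires the precise factorization form of the strict Mittag--Leffler condition over $N$ (as in Angeleri H\"ugel--Herbera) and a careful choice of $D$.

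For general $n$, I take a free resolution of $M$ with $n$th syzygy $K_n$ and use the short exact sequence $0\to K_n\to P_{n-1}\to K_{n-1}\to 0$ with $P_{n-1}=R^{(I)}$ free. Dimension shifting gives
\[
\Tor^R_n(\Hom(N,D),M)\cong\ker\bigl(\Hom(N,D)\otimes K_n\to\Hom(N,D)\otimes P_{n-1}\bigr).
\]
On the other side, $\Ext^n_R(M,N)=\operatorname{coker}\bigl(\Hom(P_{n-1},N)\to\Hom(K_n,N)\bigr)$, so $\Hom(\Ext^n_R(M,N),D)$ is the kernel of $\Hom(\Hom(K_n,N),D)\to\Hom(\Hom(P_{n-1},N),D)$. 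The maps $\nu_{K_n}$ and $\nu_{P_{n-1}}$ give a morphism between these left exact rows, and $\Phi^{(n)}_M$ is the induced map on kernels. The next thing to verify is that this identification is compatible with the definition of $\Phi^{(n)}_M$ in \cite{ET:2011}; this is a routine naturality check.

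Now $\nu_{P_{n-1}}$ is the canonical map $\Hom(N,D)^{(I)}\to\Hom(N^I,D)$, which is injective. Hence $\ker\nu_{K_n}$ maps to zero in $\Hom(N,D)\otimes P_{n-1}$, that is, it lies inside the $\Tor$-part. Therefore $\Phi^{(n)}_M$ is injective if and only if $\nu_{K_n}$ is injective. Applying the $n=0$ equivalence to $X=K_n$ then finishes the proof. Finally, independence of the choice of resolution follows because strict Mittag--Leffler over $N$ is preserved under adding projective (in particular free) summands.
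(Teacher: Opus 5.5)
\textbf{Verdict: correct, and self-contained.} The paper gives no proof of this statement; it simply quotes it from Emmanouil--Talelli. So your proposal is a complete argument where the text has only a citation, and it is correct. Its advantage over the bare citation is that it shows all the content sits in the case $n=0$. The only property of projectives it uses is that $\nu_P$ is injective.

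\textbf{The reduction to $n=0$.} This step is exactly right. Naturality of $\nu$, together with injectivity of $\nu_{P_{n-1}}$, forces
$\ker\nu_{K_n}\subseteq\ker\bigl(\Hom(N,D)\otimes K_n\to\Hom(N,D)\otimes P_{n-1}\bigr)=\Tor^R_n(\Hom(N,D),M)$.
Hence $\ker\Phi^{(n)}_M=\ker\nu_{K_n}$. The same argument works for any projective $P_{n-1}$, since $\nu_P$ is a direct summand of $\nu_F$ for $F$ free. So independence of the chosen resolution is automatic, and you do not need a separate Schanuel-type argument. The forward direction of the $n=0$ case (strict Mittag--Leffler implies $\nu_X$ injective) holds for every divisible $D$, because $\nu_F$ is an isomorphism for finitely presented $F$.

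\textbf{The step you flagged needs one precise choice.} Write $u_i\colon F_i\to X$ and $u_{ji}\colon F_i\to F_j$. The element $t=(1\otimes u_i)(t_i)$ vanishes as soon as $(1\otimes u_{ji})(t_i)=0$ for \emph{some} $j\ge i$. So to produce a nonzero element of $\ker\nu_X$ you need a single functional $\lambda$ on $\Hom(F_i,N)$ with two properties:
\begin{enumerate}
\item $\lambda$ kills $\operatorname{Im}\Hom(u_i,N)$;
\item $\lambda$ is nonzero on $\operatorname{Im}\Hom(u_{ji},N)$ for \emph{all} $j\ge i$ simultaneously.
\end{enumerate}
Take $D=(\mathbb Q/\mathbb Z)^{J}$ with $J=\{j : j\ge i\}$, and $\lambda=(\lambda_j)_{j\in J}$. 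Choose each $\lambda_j$ by the cogenerator property so that it kills $\operatorname{Im}\Hom(u_i,N)$ but not $\operatorname{Im}\Hom(u_{ji},N)$. Then $t_i=\nu_{F_i}^{-1}(\lambda)$ gives the required element.

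\textbf{A single copy of $\mathbb Q/\mathbb Z$ is not enough.} Take $R=N=\mathbb Z$ and $X=\mathbb Q=\varinjlim(\mathbb Z\xrightarrow{2}\mathbb Z\xrightarrow{3}\cdots)$.
\begin{itemize}
\item $\Hom(\mathbb Z,\mathbb Q/\mathbb Z)\otimes\mathbb Q=0$, so $\nu_X$ is injective for $D=\mathbb Q/\mathbb Z$.
\item The images $j!\,\mathbb Z$ in $\Hom(F_1,\mathbb Z)=\mathbb Z$ never equal $\operatorname{Im}\Hom(\mathbb Q,\mathbb Z)=0$, so $\mathbb Q$ is not strict Mittag--Leffler over $\mathbb Z$.
\end{itemize}
Thus the quantifier ``for any divisible $D$'' in (i) is genuinely needed, and your option of a large product of copies is the one that works.
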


The following lemma follows from 

\begin{lemma} \cite[Proposition ~1.10]{LH:2008} \label{P2}
    Let $A$ be a left $R$-module and $B$ a right $R$-module. Then \[\Tor^R_n((B)^\kappa, A) \longrightarrow \Tor^R_n(B, A)^\kappa,\] is injective for any cardinal $\kappa$ if and only if the $n^{th}$-syzygy $\Omega^n(M)$ of $M$ is $B$-Mittag-Leffler. 
\end{lemma}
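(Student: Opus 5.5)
The plan is dimension shifting along a projective resolution, followed by a diagram chase that compares the two natural ``product'' maps. I read the $M$ in the statement as $A$, so the claim concerns $\Omega^n(A)$. Recall that a left module $X$ is $B$-Mittag-Leffler when the natural map $B^{\kappa}\otimes_R X\to (B\otimes_R X)^{\kappa}$ is injective for every cardinal $\kappa$.

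For $n=0$ there is nothing to prove: $\Omega^0(A)=A$ and $\Tor_0=\otimes$, so the two conditions coincide. Now let $n\ge 1$. Fix a projective resolution of $A$ and the exact sequence
\[
0\to \Omega^n(A)\to P_{n-1}\to \Omega^{n-1}(A)\to 0 .
\]
By dimension shifting, for every right module $X$ we get a natural isomorphism
\[
\Tor^R_n(X,A)\cong \Tor^R_1(X,\Omega^{n-1}(A)) \cong \ker\bigl(X\otimes_R\Omega^n(A)\to X\otimes_R P_{n-1}\bigr),
\]
natural in $X$. Apply this with $X=B^{\kappa}$ and with $X=B$. Taking $\kappa$-fold products is exact in abelian groups, so we obtain a commutative diagram with exact rows:
\[
0\to \Tor_n(B^\kappa,A)\to B^\kappa\otimes\Omega^n(A)\to B^\kappa\otimes P_{n-1}
\]
above
\[
0\to \Tor_n(B,A)^\kappa\to (B\otimes\Omega^n(A))^\kappa\to (B\otimes P_{n-1})^\kappa .
\]
The three vertical maps are the natural ones; call them $\alpha,\beta,\gamma$ from left to right.

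\emph{($\Leftarrow$)} If $\Omega^n(A)$ is $B$-Mittag-Leffler, then $\beta$ is injective. Since $\alpha$ is the restriction of $\beta$ to a subgroup, $\alpha$ is injective as well.

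\emph{($\Rightarrow$)} The key extra input is that $\gamma$ is injective. This holds because $P_{n-1}$ is projective, hence a direct summand of some $R^{(J)}$. For $R^{(J)}$ the map $B^\kappa\otimes R^{(J)}\cong (B^\kappa)^{(J)}\to (B^{(J)})^\kappa$ is clearly injective, and injectivity passes to direct summands.

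Now take $x\in\ker\beta$. Its image in $B^\kappa\otimes P_{n-1}$ is sent by $\gamma$ to the image of $\beta(x)=0$, so it is $0$ because $\gamma$ is injective. Hence $x$ lies in $\Tor_n(B^\kappa,A)$. There $\alpha(x)=\beta(x)=0$, and the hypothesis that $\alpha$ is injective gives $x=0$. Thus $\beta$ is injective for every $\kappa$, which says exactly that $\Omega^n(A)$ is $B$-Mittag-Leffler.

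The only steps needing care are these two:
\begin{itemize}
\item the naturality in $X$ of the dimension-shift identification, which is what makes the left square commute;
\item the Mittag-Leffler property of projective modules, which gives the injectivity of $\gamma$.
\end{itemize}
I expect the naturality check to be the main (mild) obstacle; everything else is a standard four-term diagram chase. The proof is also independent of the choice of resolution, since different choices change $\Omega^n(A)$ only up to projective summands, which do not affect the Mittag-Leffler property by the same argument.
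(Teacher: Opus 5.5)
Your proof is correct. You rightly read the statement's $M$ as $A$, and the argument then runs as follows. The identification $\Tor^R_n(X,A)\cong\ker\bigl(X\otimes_R\Omega^n(A)\to X\otimes_R P_{n-1}\bigr)$ is natural in $X$, projective modules are Mittag-Leffler so $\gamma$ is injective, and the four-term diagram chase then gives both directions. The paper gives no argument of its own, only the citation to \cite[Proposition~1.10]{LH:2008}. Your dimension shift followed by the diagram chase is the standard proof of that cited result, so there is nothing to add.
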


The following lemma follows from \cite[Proposition ~8.14]{LH:2008}

 \begin{lemma} \label{P3}
     Let $M$ and $B$ be left $R$-modules. Then $M$ is $\Hom_{\mathbb{Z}}(B, \mathbb{Q}/\mathbb{Z})$-Mittag-Leffler if $M$ is strict $B$-stationary.
 \end{lemma}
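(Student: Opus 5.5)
The statement is Lemma~\ref{P3}: if $M$ is strict $B$-stationary, then $M$ is $\Hom_{\mathbb{Z}}(B,\mathbb{Q}/\mathbb{Z})$-Mittag-Leffler. I would prove it by unwinding the definitions and using the standard duality between $\Hom$ into $B$ and tensoring with the character module $B^{+}=\Hom_{\mathbb{Z}}(B,\mathbb{Q}/\mathbb{Z})$.

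\textbf{Setup.} Write $M=\varinjlim_{i\in I} F_i$ as a direct limit of finitely presented modules, with structure maps $u_{ij}\colon F_i\to F_j$ and limit maps $u_i\colon F_i\to M$. Recall two definitions.
\begin{enumerate}
\item $M$ is \emph{$Q$-Mittag-Leffler} (for a right module $Q$) if the inverse system $(\Hom_R(F_i,Q^{+}))_i$, which is isomorphic to $((Q\otimes_R F_i)^{+})_i$, satisfies the Mittag-Leffler condition. Equivalently, for every $i$ there is $j\ge i$ such that $\ker(Q\otimes u_{ij})=\ker(Q\otimes u_{ik})$ for all $k\ge j$.
\item $M$ is \emph{strict $B$-stationary} if for every $i$ there is $j\ge i$ such that $\operatorname{im}\bigl(\Hom(u_j,B)\bigr)$ and $\operatorname{im}\bigl(\Hom(u_{ij},B)\bigr)$ have the same image in $\Hom(F_i,B)$. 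That is, every map $F_j\to B$, restricted along $u_{ij}$, extends to a map $M\to B$ (the strict stationarity condition).
\end{enumerate}

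\textbf{Key step: apply the character dual.} For a finitely presented module $F$ there is a natural isomorphism $B^{+}\otimes_R F\cong \Hom_R(F,B)^{+}$. The relevant system is therefore $\bigl(\Hom(F_i,B)^{+}\bigr)$, with maps dual to $\Hom(u_{ij},B)$. Since $\mathbb{Q}/\mathbb{Z}$ is an injective cogenerator, kernels of dual maps are annihilators of images. Hence
\[
\ker\bigl(B^{+}\otimes u_{ik}\bigr)=\operatorname{im}\bigl(\Hom(u_{ik},B)\bigr)^{\perp}.
\]
This converts the Mittag-Leffler condition into the statement that the descending chain of images $\operatorname{im}\Hom(u_{ik},B)\subseteq \Hom(F_i,B)$, for $k\ge j$, stabilizes.

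\textbf{Use strictness.} Strict stationarity gives more than stabilization. With $j$ chosen for $i$, we have
\[
\operatorname{im}\Hom(u_{ij},B)=\operatorname{im}\Hom(u_i,B)\subseteq \operatorname{im}\Hom(u_{ik},B)\subseteq \operatorname{im}\Hom(u_{ij},B)
\]
for all $k\ge j$. The first inclusion holds because $u_i=u_k u_{ik}$; the second because $u_{ik}=u_{jk}u_{ij}$. So all these images are equal, and taking annihilators gives equality of the kernels, which is the Mittag-Leffler condition for $B^{+}$.

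\textbf{Main obstacle.} The delicate point is the identification $B^{+}\otimes_R F\cong\Hom_R(F,B)^{+}$ and its naturality in $F$. This needs $F$ finitely presented, so one must work with the finitely presented presentation of $M$, not arbitrary generators. One also needs care that the annihilator correspondence is taken inside $\Hom(F_i,B)^{+}$ with the correct sides (left versus right modules). Should a more direct argument be wanted, I would instead cite \cite[Proposition~8.14]{LH:2008}, from which the lemma is stated to follow.
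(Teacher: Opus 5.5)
Your proof is correct. It differs from the paper only in that the paper gives no argument at all: it simply asserts that the lemma follows from \cite[Proposition~8.14]{LH:2008}.

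You instead prove it directly from the definitions. The natural isomorphism $B^{+}\otimes_R F\cong\Hom_R(F,B)^{+}$ for finitely presented $F$ identifies $\ker(B^{+}\otimes u_{ik})$ with the annihilator of $\operatorname{im}\Hom(u_{ik},B)$. Strictness then supplies the sandwich $\operatorname{im}\Hom(u_i,B)\subseteq\operatorname{im}\Hom(u_{ik},B)\subseteq\operatorname{im}\Hom(u_{ij},B)$, which forces all these images to be equal.

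The citation buys brevity. Your argument buys transparency and in fact a sharper statement. Your key step shows that $M$ is $B^{+}$-Mittag-Leffler if and only if the images $\operatorname{im}\Hom(u_{ik},B)$ stabilize, i.e.\ if and only if $M$ is $B$-stationary. So the word ``strict'' in the hypothesis is not needed. Also, in the direction you actually use, equal images trivially have equal annihilators. Only the injectivity of $\mathbb{Q}/\mathbb{Z}$ (for the evaluation isomorphism) is used; the cogenerator property matters only for the converse.

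Two minor presentational points:
\begin{enumerate}
\item The first formulation of strict stationarity in your item (2) is garbled. The clean condition is $\operatorname{im}\Hom(u_{ij},B)=\operatorname{im}\Hom(u_i,B)$, which is what you use later.
\item The kernel criterion in your item (1) is a characterization rather than the definition, which is injectivity of $Q^{I}\otimes_R M\to(Q\otimes_R M)^{I}$ for all sets $I$. It follows in one line: apply injectivity to the family indexed by $\ker(Q\otimes u_i)$, and use $\ker(Q\otimes u_i)=\bigcup_{k\ge i}\ker(Q\otimes u_{ik})$.
\end{enumerate}
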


 The fundamental concepts of quivers and representation theory relevant to our examples can be found in \cite{ASS:2010}.

\section{Pure projective tilting modules over Pure-tilting hereditary rings}

In this section, we introduce the notion of a pure-tilting hereditary ring and investigate several equivalent characterizations of this class of rings. We establish a number of fundamental results concerning their structural and homological properties. In particular, we examine the relationship between semisimple modules and pure tilting hereditary rings.

\begin{proposition}\label{2.5}
Let $M$ be an $R$-module. If $M$ is finitely generated, $M$ is finitely presented if and only if $M$ is pure projective.
\end{proposition}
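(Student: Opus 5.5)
One direction is immediate. If $M$ is finitely presented, then $M$ is trivially a direct sum (with one summand) of finitely presented modules. Lemma~\ref{2.1}(3)$\Rightarrow$(1) then shows that $M$ is pure projective. Alternatively, this follows straight from the definition: $\Hom_R(M,-)$ preserves pure exact sequences for finitely presented $M$, by the very definition of purity.

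For the converse, assume $M$ is finitely generated and pure projective. Since $M$ is finitely generated, we can choose an epimorphism $R^n \to M$. Since $M$ is pure projective, Lemma~\ref{2.1}(3) shows that $M$ is a direct summand of some direct sum $\bigoplus_{i\in I} F_i$ with each $F_i$ finitely presented. Let $\iota\colon M\to \bigoplus_{i\in I} F_i$ and $\pi\colon \bigoplus_{i\in I} F_i\to M$ be the inclusion and projection, so that $\pi\iota=\mathrm{id}_M$.

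The key step, and the only real point, is a finiteness reduction. Because $M$ is finitely generated, the image $\iota(M)$ lies in a finite subsum $F:=\bigoplus_{i\in I_0} F_i$ with $I_0$ finite. Then $\iota$ factors through $F$, and the restriction $\pi|_F$ satisfies $\pi|_F\circ\iota=\mathrm{id}_M$. Hence $M$ is a direct summand of the finitely presented module $F$, say $F\cong M\oplus N$.

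It remains to check that a direct summand of a finitely presented module is finitely presented. I would argue via the standard characterisation: $M$ is finitely presented if and only if $\Hom_R(M,-)$ commutes with direct limits. This property passes to direct summands, since $\Hom_R(F,-)\cong \Hom_R(M,-)\oplus\Hom_R(N,-)$ and direct limits commute with finite direct sums.

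A more elementary alternative uses the presentation directly. Take a presentation $R^m\to R^k\to F\to 0$. Write $e\colon F\to F$ for the idempotent with image $M$, and let $q\colon R^k\to F\to M$ be the resulting epimorphism. One checks that $\ker q$ is generated by the finitely many generators of $\ker(R^k\to F)$ together with the images in $R^k$ of generators of $N$. Hence $M$ is finitely presented.

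I expect no serious obstacle. The only care needed is the factorisation of $\iota$ through a finite subsum, which uses finite generation of $M$ essentially.
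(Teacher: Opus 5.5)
Your proof is correct, but it is not the route the paper takes.

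\textbf{The paper's route.} The paper never uses the summand description in Lemma~\ref{2.1}(3). It notes that for an absolutely pure module $A$ the sequence $0\to A\to E\to E/A\to 0$, with $E$ injective, is pure exact. Pure projectivity of $M$ then makes $\Hom_R(M,E)\to\Hom_R(M,E/A)$ surjective, so $\Ext^1_R(M,A)=0$. The paper then appeals to a result of Enochs: a finitely generated module with $\Ext^1_R(M,A)=0$ for every absolutely pure $A$ is finitely presented. That argument only uses $\Ext$-orthogonality of $M$ to absolutely pure modules (FP-projectivity), which is weaker than pure projectivity. So it proves a stronger statement, but the decisive finiteness step is handed to an external theorem.

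\textbf{Your route.} You replace that black box with three ingredients: Warfield's characterization, the finite-support observation that $\iota(M)$ lands in a finite subsum, and closure of finitely presented modules under direct summands. This is self-contained given Lemma~\ref{2.1}, and it shows exactly where finite generation is used. It does not, by itself, give the FP-projective generalization.

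\textbf{A small slip.} In your elementary variant, $\ker q$ is generated by the generators of $\ker(R^k\to F)$ together with \emph{preimages} in $R^k$ of finitely many generators of $N$, not their images. Here $N$ is finitely generated because it is a quotient of $F$.
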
 
\begin{proof}
The direct implication is clear because every finitely presented module is pure projective. Conversely, let $M$ be a pure projective $R$-module. For any absolutely pure module $A,$ there is an exact sequence $0 \rightarrow A \rightarrow E \rightarrow E/A \rightarrow 0,$ which inducs an exact sequence $\Hom_R(M, E) \rightarrow \Hom_R(M, E/M) \rightarrow \Ext_R^1(M, A) \rightarrow 0.$ By hypothesis, $\Ext_R^1(M, A) = 0.$ The rest of the proof will be followed by \cite[Proposition]{Eno1}.
\end{proof}

An $R$-module $M$ is called \textit{pure projective tilting} if it is both pure projective and tilting. Any ring over itself is a pure projective $0$-tilting module. In general, the class of all finitely presented tilting modules is pure projective tilting, but the converse need not be true. However, the converse does hold for classical tilting modules. Consequently, the class of all classical pure projective tilting modules and the class of all finitely presented tilting modules coincide by Proposition \ref{2.5}. 
 
 \begin{definition}
 A ring $R$ is called pure-tilting hereditary if every ideal is pure projective tilting.
 \end{definition}

 \begin{remark}
 Every  pure-tilting hereditary ring is pure hereditary. Assume that every tilting $R$-module is finitely presented. Then every  pure-tilting hereditary ring is Noetherian. Consequently, if the class of tilting modules coincides with the class of finitely presented modules, then $R$ is a  pure-tilting hereditary ring if and only if $R$ is Noetherian. In particular, this condition is satisfied over a von Neumann regular rings by \cite[Corollary 6.2.4]{RGT:2006}.
 \end{remark}

 We will discuss with a basic quiver theoretic example illustrating the definition of pure-tilting hereditary ring. The following example shows that the class of pure $0$-tilting hereditary rings naturally includes certain path algebras arising from quiver representations. In particular, the path algebra associated with the Jordan quiver provides a standard nontrivial example.

 \begin{example}
     Consider the quiver $Q:\quad \bullet \xrightarrow{;\alpha;} \bullet.$ with a single vertex and one loop $(\alpha)$. The corresponding path algebra is $kQ \cong k[x]$. Since $k[x]$ is a principal ideal domain, it is a Dedekind domain. Hence, every nonzero ideal of $k[x]$ is invertible, and therefore every nonzero ideal is a progenerator. Recall that $0$-tilting modules are precisely the progenerators. Consequently, every nonzero ideal of $k[x]$ is a $0$-tilting module. Moreover, every ideal of $k[x]$ is finitely generated and projective, and hence pure projective. Therefore, every nonzero ideal of $k[x]$ is a pure-projective $0$-tilting module. Thus, the path algebra of the Jordan quiver provides a standard nontrivial quiver-theoretic example of a pure $0$-tilting hereditary ring.
 \end{example}

\begin{example}
Every Noetherian von Neumann regular ring R is a  pure-tilting hereditary ring. In particular, every semisimple ring is pure-tilting hereditary. However, the converse does not hold in general.
\end{example}

\begin{proof}
By \cite[Corollary 6.2.4]{RGT:2006}, over a von Neumann regular ring the class of all tilting modules coincides with the class of all projective generators. Since $R$ is Noetherian, every ideal of $R$ is finitely generated, and hence projective. Therefore, every ideal is a pure projective tilting module. It follows that $R$ is a  pure-tilting hereditary ring. In particular, every semisimple ring is von Neumann regular and Noetherian, and therefore it is pure-tilting hereditary. Conversely, assume that $R$ is a  pure-tilting hereditary ring. Then every ideal of $R$ is a pure projective tilting module. However, pure projective tilting modules need not be projective. Consequently, ideals of $R$ are not necessarily projective, and hence not necessarily flat. Therefore, $R$ need not be a von Neumann regular ring. Thus, the converse implication does not hold.
\end{proof}

We observe that every semisimple ring is an example of a pure-tilting hereditary ring. However, by the above example and Remark (\ref{3.8.R}), there exist pure-tilting hereditary rings which are not necessarily semisimple. Thus, the class of  pure-tilting hereditary rings properly contains the class of semisimple rings.

  \begin{proposition}\label{2.0.2}
Every pure hereditary ring is coherent.
 \end{proposition}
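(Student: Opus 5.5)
The plan is to show directly that every finitely generated ideal of $R$ is finitely presented. This is one standard characterization of (left) coherence: $R$ is left coherent if and only if every finitely generated left ideal is finitely presented. Here "pure hereditary" means that every ideal of $R$ is pure projective, following the convention recalled in the introduction.

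First, fix a finitely generated left ideal $I$ of $R$. By hypothesis $I$ is pure projective. Since $I$ is finitely generated, Proposition~\ref{2.5} immediately gives that $I$ is finitely presented. Because $I$ was an arbitrary finitely generated left ideal, $R$ is left coherent. If "ideal" is read two-sidedly or on both sides, the same argument applied to right ideals gives right coherence.

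The main obstacle is that the proof of Proposition~\ref{2.5} in the paper is sketchy, so I would rather not rest the argument entirely on it. I would therefore give a self-contained proof of the key step: a finitely generated pure projective module $M$ is finitely presented. By Lemma~\ref{2.1}, $M$ is a direct summand of $\bigoplus_{j\in J} F_j$ with each $F_j$ finitely presented. Since $M$ is finitely generated, its image under the inclusion into $\bigoplus_{j\in J} F_j$ lies in a finite subsum $F=\bigoplus_{j\in J_0}F_j$. Composing with the projection $\pi$ onto the summand $M$ shows that $M$ is a direct summand of $F$: the inclusion $\iota\colon M\to F$ and the restriction $\pi|_F$ satisfy $\pi|_F\circ\iota=\mathrm{id}_M$.

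Finally, a direct summand of a finitely presented module is finitely presented. Writing $F\cong M\oplus M'$, both $M$ and $M'$ are finitely generated. The kernel of $R^n\to M$ is then a direct summand of the kernel of $R^n\oplus R^m\to M\oplus M'$, namely up to adding $R^m$. The latter kernel is finitely generated, since $F$ is finitely presented and Schanuel's lemma makes finite generation of the syzygy independent of the chosen finite presentation. Hence the kernel of $R^n\to M$ is finitely generated, so $M$ is finitely presented. This gives the coherence claim.
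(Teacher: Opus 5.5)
Your argument is correct and is the paper's: a finitely generated left ideal is pure projective by hypothesis, so it is finitely presented by Proposition~\ref{2.5}, and this is exactly the characterization of coherence. Your self-contained proof of the key step via Lemma~\ref{2.1}(3) is sound and more elementary than the paper's sketch of Proposition~\ref{2.5}: a finitely generated summand of $\bigoplus_j F_j$ lies in and splits off a finite subsum, and summands of finitely presented modules are finitely presented, whereas the paper goes through $\Ext^1_R(M,A)=0$ for absolutely pure $A$ and an appeal to Enochs.
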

 
 \begin{proof}
 Every finitely generated ideal is evidently pure projective, and by Proposition \ref{2.5}, it is also finitely presented.
 \end{proof}
 
\begin{remark}\label{REM:3.8}
Every  pure-tilting hereditary ring is coherent. This follows immediately from the fact that every  pure-tilting hereditary ring is pure hereditary, together with Proposition~\ref{2.0.2}.
\end{remark}

We have the following
\begin{proposition} \label{2.4.0}
    Every noetherian ring $R$ is pure hereditary.
\end{proposition}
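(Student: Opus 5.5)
The claim is that every ideal of a noetherian ring $R$ is pure projective. By Lemma~\ref{2.1}, this amounts to showing that each ideal is a direct summand of a direct sum of finitely presented modules. The natural plan is to prove the stronger statement that each ideal is itself finitely presented. Throughout we read ``noetherian'' on the side matching the ideals in question, left noetherian for left ideals, as in the paper's conventions on left modules.

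The first step is finite generation. Let $I$ be an ideal of $R$. Since $R$ is noetherian, $I$ is finitely generated, so there is an epimorphism $\pi\colon R^n \to I$ for some $n \geq 1$.

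The second step is to show the kernel $K = \ker \pi$ is also finitely generated. Here $K$ is a submodule of the finitely generated module $R^n$. Over a noetherian ring, finitely generated modules are noetherian: by induction on $n$, using the exact sequences $0 \to R^{n-1} \to R^n \to R \to 0$ and the fact that extensions of noetherian modules are noetherian. Hence $K$ is finitely generated, and we get an exact sequence
\[
R^m \longrightarrow R^n \longrightarrow I \longrightarrow 0 ,
\]
so $I$ is finitely presented.

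The final step applies Lemma~\ref{2.1}(3), or simply the direct implication of Proposition~\ref{2.5}: a finitely presented module is pure projective, being trivially a direct summand of a direct sum (with one term) of finitely presented modules. Hence every ideal of $R$ is pure projective, so $R$ is pure hereditary.

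There is no real obstacle here. The only point requiring care is the standard fact that submodules of finitely generated modules over a noetherian ring are finitely generated, which we take from the induction above. The argument needs no tilting-theoretic input.
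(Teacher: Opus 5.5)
Your proof is correct, and it matches the paper. The paper states this proposition without any proof, implicitly relying on the same standard chain you spell out (ideals of a noetherian ring are finitely generated, hence finitely presented, hence pure projective), which is the reasoning it also uses in the converse direction of Theorem~\ref{2.3}.
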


We now turn to the natural question: When does the converse hold? Before addressing this, recall that a ring $R$ is $CF$ if every cyclic $R$-module can be embedded in a free $R$-module. We prove the following theorem over $CF$-ring 
\begin{theorem}
 Let $R$ be a $CF$-ring. Then $R$ is a pure hereditary ring if and only if $R$ is noetherian. 
 \end{theorem}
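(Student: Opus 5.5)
The backward implication is Proposition~\ref{2.4.0}: a noetherian ring is pure hereditary, and the $CF$ hypothesis is not needed for it. For completeness, if $R$ is noetherian then every ideal is finitely generated, hence finitely presented, hence pure projective by Lemma~\ref{2.1}(3).

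For the forward implication, assume $R$ is a pure hereditary $CF$-ring and let $I$ be an ideal of $R$. The aim is to show that $I$ is finitely generated. I plan to use the $CF$ condition to move from "pure projective" to "finitely generated". The cyclic module $R/I$ embeds into a free module $F = R^{(X)}$. The image of $1+I$ has finite support, so the embedding factors through a finite free module $R^n$. This gives a monomorphism $R/I \hookrightarrow R^n$, and $I$ is the kernel of the composite $R \to R^n$, $1 \mapsto (a_1,\dots,a_n)$. So $I = \bigcap_{j} \operatorname{ann}(a_j)$ is an annihilator of a finite subset of $R$.

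Next I would use coherence. By Proposition~\ref{2.0.2}, $R$ is coherent. Over a coherent ring the kernel of a map between finitely presented modules is finitely presented. Here $R \to R^n$ is such a map, so its kernel $I$ is finitely generated. This proves that every ideal is finitely generated, so $R$ is noetherian.

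Note that this argument only used the coherence consequence of pure heredity, together with the $CF$ condition. So it actually shows that a coherent $CF$-ring is noetherian, which is consistent since $CF$-rings are very restrictive. The main point to check is the reduction to finite rank. The embedding $R/I \hookrightarrow R^{(X)}$ is determined by where it sends the generator $1+I$. Its image therefore lies in a finitely generated free summand $R^n$, and injectivity is preserved, so the reduction holds. A second point to check is side conventions (left versus right ideals): the kernel $\ker(r \mapsto r(a_1,\dots,a_n))$ is a left ideal, matching left coherence and left modules. If Proposition~\ref{2.0.2} only gives coherence on the side where "ideal" is understood, the argument runs on that same side.
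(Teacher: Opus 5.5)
Your proof is correct, but it takes a different route from the paper's.

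\textbf{The paper's route.} The paper goes from ``every ideal is pure projective'' straight to ``every cyclic module $R/I$ is pure projective.'' From this it gets $\Ext_R^1(R/I,A)=0$ for every absolutely pure $A$, so absolutely pure modules are injective by Baer's criterion. It then cites Megibben's theorem, which characterizes noetherian rings by this property.

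\textbf{Where the paper's argument needs justification.} The middle step does not follow from pure heredity alone. By Proposition~\ref{2.5}, the finitely generated module $R/I$ is pure projective exactly when it is finitely presented, that is, exactly when $I$ is finitely generated. That is already the conclusion. The paper also never uses the $CF$ hypothesis.

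\textbf{What your argument adds.} Your argument supplies exactly what is missing, and it is where $CF$ is genuinely used. The $CF$ embedding gives $R/I\cong R(a_1,\dots,a_n)\subseteq R^n$. Left coherence from Proposition~\ref{2.0.2}, which only uses that finitely generated ideals are pure projective, then makes this cyclic submodule finitely presented. Hence $I$ is finitely generated.

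\textbf{Comparison.} Your approach is more elementary: it needs no absolutely pure modules and no Megibben. It makes the role of $CF$ explicit and proves the sharper statement that every left coherent left $CF$-ring is left noetherian. The paper's homological route only becomes valid once your argument is used to justify that cyclic modules are pure projective, and at that point the detour through injectivity of absolutely pure modules is redundant.
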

 
 \begin{proof}
  Assume that $R$ is a pure hereditary  ring. Then every ideal is pure projective. In particular, every cyclic module is pure projective. It follows that $R/I$ is pure projective. Let $A$ be an absolutely pure module. Then the following sequence $0 \rightarrow A \rightarrow E \rightarrow E/A \rightarrow 0$ is pure exact. Thus, $\Ext_R^1(R/I, A) = 0.$ Hence, every absolutely pure module is injective. Then the theorem follows from \cite[Theorem ~3]{CM}. The converse of the assertion is true from the Proposition ~\ref{2.4.0}.
 \end{proof}

 \begin{corollary}
    Let $R$ be a $CF$-ring. Then every  pure-tilting hereditary ring is Noetherian.
 \end{corollary}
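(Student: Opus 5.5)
The plan is to reduce the corollary to the preceding theorem on $CF$-rings. Let $R$ be a $CF$-ring that is pure-tilting hereditary. By definition every ideal of $R$ is a pure projective tilting module, so in particular every ideal is pure projective. Hence $R$ is pure hereditary; this is the first observation of the Remark following the definition of pure-tilting hereditary rings, and it only requires forgetting the tilting condition. Since $R$ is a $CF$-ring and pure hereditary, the preceding theorem applies directly and gives that $R$ is Noetherian.

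If one wants the argument to be self-contained instead of quoting the theorem, the steps are as follows. First, pure projectivity of the ideals makes $R$ coherent by Proposition~\ref{2.0.2}. Next, I would show that every absolutely pure module $A$ is injective. The embedding $0\to A\to E(A)\to E(A)/A\to 0$ is pure exact, and pure projectivity of the relevant cyclic modules $R/I$ then forces $\Ext^1_R(R/I,A)=0$. Baer's criterion then gives that $A$ is injective. Finally, I would invoke \cite[Theorem~3]{CM}: over a ring (here a $CF$-ring) on which absolutely pure modules are injective, the ring is Noetherian.

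The main obstacle in the self-contained route is the vanishing of $\Ext^1_R(R/I,A)$ for every ideal $I$. The hypothesis only says that the ideals $I$ themselves are pure projective, not the quotients $R/I$, and the paper's proof of the theorem passes over this step quickly. I would handle it with the dimension-shift sequence $\Hom_R(R,A)\to\Hom_R(I,A)\to\Ext^1_R(R/I,A)\to 0$. The task then reduces to extending every map $I\to A$ to $R$. For this I would use that $I$ is a direct summand of a direct sum of finitely presented modules (Lemma~\ref{2.1}), together with the fact that absolutely pure modules are injective relative to finitely presented modules over the coherent ring $R$. Since the corollary itself is just an application, for the written proof I will simply cite the theorem rather than redo this verification.
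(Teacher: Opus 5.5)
Your written argument is exactly how the paper obtains this corollary, which it states without proof right after the $CF$ theorem. A pure-tilting hereditary ring is pure hereditary once you forget the tilting condition, and a pure hereditary $CF$-ring is Noetherian by that theorem.

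You are right that the theorem's proof jumps from ``every ideal is pure projective'' to ``$R/I$ is pure projective''. However, the repair you sketch for the self-contained route would fail.

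\textbf{Why the repair fails.} Pure projectivity of $I$ only yields $\Ext^1_R(I,A)=0$ for absolutely pure $A$, equivalently $\Ext^2_R(R/I,A)=0$. Extending maps along $I\subseteq R$ is the different statement $\Ext^1_R(R/I,A)=0$, which is governed by the cokernel $R/I$. That cokernel is not finitely presented when $I$ is not finitely generated, so injectivity of $A$ relative to finitely presented modules says nothing about it. Writing $I$ as a summand of $\bigoplus F_\alpha$ only lets you extend maps along $I\subseteq \bigoplus F_\alpha$, not along $I\subseteq R$.

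\textbf{Counterexample.} Let $R\subseteq k^{\mathbb N}$ be the ring of eventually constant sequences.
\begin{enumerate}
\item $R$ is von Neumann regular, and each ideal is generated by countably many idempotents, hence projective. So $R$ is hereditary, pure hereditary and coherent.
\item The ideal $I=k^{(\mathbb N)}$ is projective and, like every $R$-module, absolutely pure.
\item The identity of $I$ does not extend to a map $R\to I$: otherwise $I$ would be a direct summand of $R$, hence cyclic, but $I$ is not finitely generated.
\end{enumerate}
Hence $\Ext^1_R(R/I,I)\neq 0$. Your repair never uses the $CF$ hypothesis, so it would prove that every pure hereditary ring is Noetherian, which this $R$ refutes.

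\textbf{The correct fix uses $CF$.} Embed $R/I$ into some $R^n$ as a cyclic submodule $Rx$. Then $C=R^n/Rx$ is finitely presented, and $\Ext^1_R(R/I,A)$ embeds in $\Ext^2_R(C,A)\cong\Ext^1_R(\Omega C,A)$. This vanishes because over the coherent ring $R$ the first syzygy $\Omega C$ is again finitely presented. Baer's criterion and \cite[Theorem~3]{CM} then finish, and this route never needs $R/I$ to be pure projective.
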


Kaplansky formulated the following conjecture in \cite{KAP:1970}, which was subsequently proved by J. W. Fisher and R. L. Snider in \cite{FS:1974}. 
 
\begin{conjecture*} [(Kaplansky Conjecture:)]
    A ring $R$ is von Neuman regular if and only if $R$ is semiprime and each prime factor ring of $R$ is von Neumann regular. 
\end{conjecture*}

Motivated by this result, in the present work we establish a connection between pure-tilting hereditary rings and the hereditary Noetherian property of prime factor rings. As a first step toward proving this relation, we prove the following result.

 \begin{theorem}\label{2.3}
 Let $R$ be a von Neumann regular ring. Then $R$ is a  pure-tilting hereditary ring if and only if $R$ is hereditary noetherian. 
 \end{theorem}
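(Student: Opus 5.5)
We prove the two implications separately. In both directions the key input is \cite[Corollary 6.2.4]{RGT:2006}: over a von Neumann regular ring, the tilting modules are exactly the projective generators.

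\emph{($\Leftarrow$)} Assume $R$ is von Neumann regular and hereditary noetherian. Let $I$ be an ideal of $R$. Since $R$ is noetherian, $I$ is finitely generated. Since $R$ is hereditary, $I$ is projective. A finitely generated projective module is finitely presented, so $I$ is pure projective by Lemma~\ref{2.1}. It remains to show that $I$ is tilting. By \cite[Corollary 6.2.4]{RGT:2006}, it suffices to check that $I$ is a projective generator. A finitely generated left ideal of a von Neumann regular ring is a direct summand, say $R=I\oplus I'$. In the spirit of the earlier Example (every noetherian von Neumann regular ring is pure-tilting hereditary), we then want $\operatorname{Tr}(I)=R$.

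\emph{($\Rightarrow$)} Assume $R$ is von Neumann regular and pure-tilting hereditary.
\begin{enumerate}
\item Each ideal $I$ is tilting, so by \cite[Corollary 6.2.4]{RGT:2006} it is a projective generator. Hence every ideal is projective, and $R$ is hereditary.
\item Each $I$ is also pure projective, so by Lemma~\ref{2.1} it is a direct summand of a direct sum of finitely presented modules.
\item To get noetherianity, take an arbitrary ideal $I$ and a finitely generated subideal $I_0\subseteq I$. Over a von Neumann regular ring, $I_0$ is a direct summand of $R$, hence of $I$. If $I$ were not finitely generated, choose a strictly ascending chain of such summands $I_0\subsetneq I_1\subsetneq\cdots$, and let $J=\bigcup_n I_n$.
\item Then $J=\bigoplus_n (I_{n}/I_{n-1})$ is a countably generated ideal. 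It is pure projective tilting by hypothesis, hence a projective generator.
\item Argue that $R/J$ is pure projective. One route: $R/J$ is flat, so $0\to J\to R\to R/J\to 0$ is pure exact; if $R/J$ is a summand of a pure projective module, this sequence splits, making $J$ a summand of $R$ and hence cyclic. That contradicts strict ascent.
\item Alternatively, use Proposition~\ref{2.5}. Pure-tilting hereditary implies pure hereditary, so the Remark and the coherence argument of Proposition~\ref{2.0.2} apply. Combined with the CF-ring theorem (von Neumann regular rings embed cyclic modules $R/I_0\cong I_0'$ into $R$ whenever $I_0$ is finitely generated), this yields the noetherian property. Concretely, cyclic modules $R/J$ being pure projective forces every absolutely pure module to be injective. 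Since every module over a von Neumann regular ring is absolutely pure, every module is injective, so $R$ is semisimple and in particular noetherian.
\end{enumerate}

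\emph{Main obstacle.} The crux is step (5): passing from ``every ideal is pure projective'' to ``every cyclic module $R/I$ is pure projective'', which is exactly the step used in the CF-ring theorem. For a von Neumann regular ring this should follow because $0\to I\to R\to R/I\to 0$ is pure exact ($R/I$ is flat), and pure projectivity of $I$, together with $I$ being a generator, must be leveraged to split it. I would first try to prove that this sequence splits directly. If that fails, I would fall back on the countable-chain argument to contradict the non-finite generation of $J$.

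A further caveat for the converse: whether a finitely generated ideal is a generator is not automatic (it requires $\operatorname{Tr}(I)=R$). I would handle it by arguing that the proper-ideal case reduces via the decomposition $R=I\oplus I'$, and I would note that the statement implicitly relies on the same reasoning as the earlier Example.
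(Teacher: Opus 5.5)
You have found exactly the two weak points, but neither can be closed. The statement is false in both directions, so no argument will repair them; the paper's proof also asserts both missing steps without justification. Read ``ideal'' as ``nonzero left ideal'', since the zero module is never tilting over $R\neq 0$; the examples below also work for two-sided ideals.

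Over a regular ring the tilting modules are exactly the projective generators. So the hypothesis says every nonzero left ideal $I$ is projective with $\operatorname{Tr}(I)=R$. A nonzero two-sided ideal $J$ of a regular ring contains a nonzero idempotent $e$. If $Re$ is a generator, then $R=\operatorname{Tr}(Re)=ReR\subseteq J$, so the hypothesis forces $R$ to be simple. By contrast, regular hereditary noetherian rings are exactly the semisimple ones.

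\emph{The converse fails.} Take $R=k\times k$ and $I=k\times 0$. Then $I$ is a finitely generated projective summand of $R$. However, $(0,1)$ annihilates every module in $\Add(I)$, so $R$ embeds in none of them and $I$ is not tilting. Your proposed ``reduction via $R=I\oplus I'$'' therefore cannot produce $\operatorname{Tr}(I)=R$. The paper's sentence ``since $R$ is von Neumann regular, every ideal is pure projective tilting'' fails for the same reason, as does its Example claiming that all noetherian regular rings are pure-tilting hereditary.

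\emph{The forward direction fails.} Steps (1)--(4) are fine, but step (5) cannot succeed. Let $k$ be a countable field and $R=\varinjlim_n M_{2^n}(k)$ along the unital embeddings $A\mapsto\operatorname{diag}(A,A)$. Then:
\begin{enumerate}
\item[(a)] $R$ is regular, simple and countable.
\item[(b)] Every left ideal is countably generated, hence a direct sum of principal left ideals $Re_i$, hence projective.
\item[(c)] By simplicity, every nonzero left ideal is a projective generator, i.e.\ a pure projective $0$-tilting module.
\item[(d)] Yet $R$ is not noetherian, since it contains infinite families of pairwise orthogonal nonzero idempotents.
\end{enumerate}
For a non-finitely generated left ideal $J$ of this ring, $R/J$ is flat. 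So $R/J$ is not pure projective: otherwise it would be projective and $J$ would be a cyclic summand of $R$.

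Thus ``every ideal is pure projective'' does not yield ``every cyclic module is pure projective''; the paper's CF-ring theorem asserts this step without proof. Step (6) cannot import the CF hypothesis either: a cyclic submodule of a free module over a regular ring is a direct summand, so a regular ring is CF only if it is semisimple. The paper's own reason for noetherianity, that projective generators are finitely presented, is also false, since $R^{(\mathbb N)}$ is a projective generator.

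What is actually true for regular $R$ is the following. Pure-tilting hereditary means $R$ is simple and left hereditary. Hereditary noetherian means $R$ is semisimple. Both hold precisely for simple artinian rings.
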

 
 \begin{proof}
 Assume that \(R\) is a  pure-tilting hereditary ring. Let \(I\) be an ideal of \(R\). By hypothesis, \(I\) is a pure projective tilting module. Consider a short exact sequence $\eta \colon 0 \rightarrow K \rightarrow F \rightarrow I \rightarrow 0,$ where $F$ is free. Since \(I\) is flat, the sequence \(\eta\) is pure exact. Hence, by Lemma~\ref{2.1}, the sequence splits. Therefore, \(I\) is projective. It follows that every ideal of \(R\) is projective, and thus \(R\) is hereditary. Moreover, \(R\) is Noetherian because the class of tilting modules coincides with the class of projective generators, all of which are finitely presented. Conversely, suppose that \(R\) is hereditary Noetherian. Then every ideal of \(R\) is finitely presented, and hence pure projective. Since $R$ is von Neumann regular, then every ideal is pure projective tilting.  as required. Therefore, \(R\) is a  pure-tilting hereditary ring.
 \end{proof}

The following corollary gives a connection between pure-tilting hereditary rings and the hereditary Noetherian property of prime factor rings.

\begin{corollary}
    Let $R$ be a von Neuman regular ring.  Then $R$ is a pure-tilting hereditary ring if and only if each factor ring of $R$ is semiprime and each prime factor ring is hereditary noetherian. 
\end{corollary}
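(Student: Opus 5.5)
The plan is to combine Theorem~\ref{2.3} with two facts about von Neumann regular rings: every factor ring of a von Neumann regular ring is again von Neumann regular, and a von Neumann regular ring is semiprime (indeed $J(R)=0$ and every element $a$ satisfies $a=axa$, so there are no nonzero nilpotent ideals). Kaplansky's Conjecture (the Fisher--Snider theorem) then lets us pass between $R$ and its prime factor rings. Both implications go through the equivalence ``pure-tilting hereditary $\Leftrightarrow$ hereditary noetherian'' of Theorem~\ref{2.3}, applied once to $R$ and once to each factor ring.

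For the forward direction, assume $R$ is pure-tilting hereditary. Every factor ring $R/I$ is von Neumann regular, hence semiprime, so the first condition holds automatically. By Theorem~\ref{2.3}, $R$ is hereditary noetherian, and every factor ring $R/P$ with $P$ prime is then noetherian, being a quotient of a noetherian ring. For heredity, $R/P$ is von Neumann regular and noetherian, and a noetherian von Neumann regular ring is semisimple: each finitely generated one-sided ideal is generated by an idempotent, hence is a direct summand. So $R/P$ is semisimple, in particular hereditary noetherian.

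For the converse, assume every factor ring is semiprime and every prime factor ring is hereditary noetherian. By Theorem~\ref{2.3} it suffices to show that $R$ is noetherian, since a noetherian von Neumann regular ring is semisimple, hence hereditary. Each prime factor $R/P$ is von Neumann regular and noetherian, so it is semisimple. Being prime as well, it is simple artinian, so every prime ideal is maximal.

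The main obstacle is lifting noetherianity from the prime factors to $R$ itself. This step is not formal: an infinite product of fields is von Neumann regular, its prime factors are fields, every factor ring is semiprime, and yet the ring is not noetherian. So the stated hypotheses do not obviously suffice. I would first try to extract noetherianity from an ascending chain of ideals $I_1\subseteq I_2\subseteq\cdots$ via the idempotent generators of finitely generated ideals, and look for a prime ideal containing $\bigcup I_n$ that witnesses stabilization. If that fails, as the product-of-fields example suggests it must, I would strengthen the hypothesis: require, for instance, that $R$ have finitely many prime ideals, or that $R$ be noetherian. Then $R$ is semiprime with finitely many maximal primes, hence a finite product of simple artinian rings, hence semisimple. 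This would be the point to flag the statement as needing an extra finiteness assumption.
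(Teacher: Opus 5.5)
Your analysis is correct, and it is more accurate than the paper. The paper gives no proof of this corollary; the intended argument is evidently Theorem~\ref{2.3} combined with the Fisher--Snider theorem. The ``if'' direction is false.

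Your forward direction is fine, granted Theorem~\ref{2.3}. For the converse, drop the hedging and the ascending-chain attempt: $R=\prod_{i\in\mathbb N}F_i$ is an outright counterexample, not merely evidence that the hypotheses ``do not obviously suffice''.
\begin{itemize}
\item Its factor rings are regular, hence semiprime.
\item Its prime ideals are maximal, so its prime factors are fields, hence hereditary noetherian.
\item But $R$ is not noetherian, so by Theorem~\ref{2.3} it is not pure-tilting hereditary.
\end{itemize}
The paper asserts exactly this about the same ring, in the paragraph after Theorem~\ref{3.3.3} and in Remark~\ref{3.8.R}. So the corollary contradicts the paper's own results.

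The failure is where you located it. Fisher--Snider lifts regularity from the prime factors to $R$, but no chain condition lifts. The semiprimeness hypothesis is automatic for regular rings and adds nothing.

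Your repair is correct. Under the right-hand hypotheses every prime $P$ is maximal: $R/P$ is noetherian and regular, hence semisimple, and prime, hence simple artinian. If there are only finitely many primes $P_1,\dots,P_n$, semiprimeness and the Chinese remainder theorem give $R\cong\prod_j R/P_j$, which is semisimple. Conversely, a semisimple ring has only finitely many primes. So the statement becomes true once ``each factor ring is semiprime'' is replaced by ``$R$ has only finitely many prime ideals''.

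One caution about the half you proved: it is only as sound as the ``noetherian'' part of Theorem~\ref{2.3}. The paper justifies that part by claiming projective generators are finitely presented, which is false (e.g.\ $R^{(\mathbb N)}$).

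If the definition is read as ``every \emph{nonzero} ideal is pure projective tilting'', take $R=\varinjlim M_{2^n}(k)$ along $A\mapsto\operatorname{diag}(A,A)$. This ring is simple, regular, of countable dimension, and not artinian. It is pure-tilting hereditary: each nonzero left or two-sided ideal is countably generated, hence projective. It is then a projective generator because $R$ is simple, hence tilting in the sense of Section~2. Yet its prime factor $R/0=R$ is not noetherian, so your forward direction fails under this reading.

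Read literally, the zero ideal is never tilting over a nonzero ring. Then only $R=0$ qualifies, and the forward direction is vacuous.
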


  The $\F$-semiperfect ring introduced by Oberst and Schneider \cite{OS:1971}. A ring $R$ is said to be semiregular if $R/J(R)$ is von Neumann regular and idempotents lift modulo $J(R)$ in \cite{LA}.

 \begin{theorem}\label{3.3.3}
Let \(R\) be a semiregular ring. Then \(R\) is Noetherian and \(R/J(R)\) is a  pure-tilting hereditary ring if and only if \(R\) is Artinian and perfect.
\end{theorem}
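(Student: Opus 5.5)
The plan is to reduce everything to Theorem~\ref{2.3}, applied to the von Neumann regular ring $\bar R = R/J(R)$, together with standard facts about semisimple and perfect rings. Since $R$ is semiregular, $\bar R$ is von Neumann regular, so Theorem~\ref{2.3} applies to $\bar R$. Thus $\bar R$ is pure-tilting hereditary if and only if $\bar R$ is hereditary Noetherian.

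For the forward direction, assume $R$ is Noetherian and $\bar R$ is pure-tilting hereditary. By Theorem~\ref{2.3}, $\bar R$ is Noetherian; this also follows directly from $R$ being Noetherian.
\begin{enumerate}
\item The key step is that a Noetherian von Neumann regular ring is semisimple. Every left ideal $L$ of $\bar R$ is finitely generated. In a von Neumann regular ring, finitely generated left ideals are generated by an idempotent, so $L$ is a direct summand of $\bar R$. Hence every left ideal is a summand, and $\bar R$ is semisimple.
\item Next, $J(R)$ is nilpotent. If "Noetherian" here means left and right Noetherian, this is immediate from Hopkins--Levitzki once we know $R$ is Artinian, so the argument would be circular. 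Instead I would note that $R$ is now semilocal, since $R/J(R)$ is semisimple.
\item To obtain $R$ Artinian, I would try the classical characterization: a left Noetherian ring with $R/J(R)$ semisimple and $J(R)$ nilpotent is left Artinian. Each $J^k/J^{k+1}$ is then a finitely generated semisimple module, hence of finite length.
\item Given nilpotence, $J(R)$ is in particular left $T$-nilpotent. Together with semisimplicity of $\bar R$, this makes $R$ perfect by the definition in the preliminaries.
\end{enumerate}

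For the converse, assume $R$ is Artinian and perfect. Then $R$ is Noetherian by Hopkins--Levitzki, and $\bar R$ is semisimple by the Artinian structure theory. A semisimple ring is pure-tilting hereditary by the example following the definition. Equivalently, $\bar R$ is hereditary Noetherian and von Neumann regular, so Theorem~\ref{2.3} gives the conclusion.

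The main obstacle is step 2 of the forward direction: deducing that $J(R)$ is nilpotent from "$R$ Noetherian and $R/J(R)$ semisimple." This fails in general; a commutative Noetherian local domain such as $k[[x]]$ is a counterexample, since it is semiregular with residue field semisimple but is not Artinian. So the statement as written needs an additional hypothesis. Possible fixes are:
\begin{itemize}
\item interpret "Noetherian" so that $J(R)$ is nil, for instance by combining it with semiregularity in the sense of idempotents lifting and using that one-sided Noetherian rings have nilpotent nil ideals (Levitzki);
\item require $J(R)$ to be nil.
\end{itemize}
I would first check whether the paper's notion of semiregular, or the flat/pure-projective machinery of Section~3, can force $J(R)$ to be nil. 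Failing that, I would record the implication under the extra assumption that $J(R)$ is nil, with Levitzki then giving nilpotence.
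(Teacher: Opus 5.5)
Your skeleton is the paper's: apply Theorem~\ref{2.3} to $R/J(R)$, get semisimplicity, then nilpotence of $J(R)$, then Artinian and perfect, and prove the converse via Theorem~\ref{2.3}. Your objection to step 2 is correct. The paper's proof simply asserts that ``$R$ Noetherian and $R/J(R)$ semisimple Artinian'' forces $J(R)$ to be nilpotent, and that is false.

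Your example settles it. $R=k[[x]]$ (or $\mathbb{Z}_{(p)}$) is local, hence semiregular, and it is Noetherian. Its quotient $R/J(R)=k$ is pure-tilting hereditary, since its only nonzero ideal is the progenerator $k$. (The zero ideal has to be excluded from the definition anyway, or no nonzero ring qualifies.) Yet $R$ is not Artinian, and not perfect, since $J(R)=(x)$ is not even nil. So the forward implication is false and no proof of it exists.

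Of your two repairs, the first cannot work. The same ring is Noetherian and semiregular with $J(R)$ not nil, so neither semiregularity nor anything in Section~3 can force nilness, and there is nothing left to check there. The second repair does work. With $J(R)$ nil, Levitzki makes it nilpotent, your step 3 gives left Artinian, and nilpotence gives $T$-nilpotence, hence perfectness.

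The step you did not question is the converse. Like the paper, you take it from the claim that every semisimple ring is pure-tilting hereditary, i.e.\ the ``if'' half of Theorem~\ref{2.3}. That claim ignores condition (3) in the definition of tilting: over a semisimple ring, a module is tilting exactly when it is a generator. In $R=k\times k$ the ideal $k\times 0$ is not a generator. Every module in $\Add(k\times 0)$ is annihilated by $(0,1)$, so $R$ cannot embed in one.

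Hence $R=k\times k$ is semiregular with $J(R)=0$, Artinian and perfect, yet $R/J(R)=R$ is not pure-tilting hereditary. The converse fails as well, so your proposal inherits a false step there. In fact a semisimple ring is pure-tilting hereditary precisely when it is simple Artinian, since then every nonzero left ideal is a progenerator. A true version is therefore, for instance: if $R$ is semiregular with $J(R)$ nil, then $R$ is left Noetherian with $R/J(R)$ pure-tilting hereditary if and only if $R$ is left Artinian with $R/J(R)$ simple Artinian.
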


\begin{proof}
Assume that \(R\) is Noetherian and that \(R/J(R)\) is a  pure-tilting hereditary ring. Since \(R\) is semiregular, the quotient ring \(R/J(R)\) is von Neumann regular. Hence, by Proposition~\ref{2.3}, \(R/J(R)\) is hereditary Noetherian. A von Neumann regular hereditary Noetherian ring is semisimple Artinian. Therefore, $R/J(R)$ is semisimple Artinian. Since \(R\) is semiregular, \(J(R)\) is Jacobson radical and idempotents lift modulo \(J(R)\). Consequently, \(R\) is semiperfect. Moreover, because \(R\) is Noetherian and \(R/J(R)\) is semisimple Artinian, it follows that \(J(R)\) is nilpotent. Hence \(R\) is Artinian. In particular, every Artinian ring is perfect, and therefore \(R\) is perfect. Conversely, suppose that \(R\) is Artinian and perfect. Then \(R/J(R)\) is semisimple Artinian. Every semisimple Artinian ring is hereditary Noetherian and von Neumann regular. Therefore, by Proposition~\ref{2.3}, \(R/J(R)\) is a  pure-tilting hereditary ring. Since every Artinian ring is Noetherian, the conclusion follows.
\end{proof}

A standard example of a commutative von Neumann regular ring which is not Artinian is the infinite direct product of fields $R=\prod_{i\in I} F_i,$ where $I$ is an infinite index set. It is well known that arbitrary direct products of fields are von Neumann regular, while infinite products fail to be Artinian. Hence, $R=\prod_{i\in I} F_i,$ is not perfect because \cite[Exercise 4.22, Page 161]{LM:1999}. By the previous theorem, \(R/J(R)\) is an example for a von Neumann regular ring but it is not pure-tilting hereditary. This example naturally leads to the following remark.

\begin{remark} \label{3.8.R}
A commutative von Neumann regular ring which is not Artinian cannot be pure-tilting hereditary. 
\end{remark}

\begin{proof}
    By \cite[Exercise 4.22, Page 161]{LM:1999}, $R$ is not perfect. Therefore, $R$ cannot be a pure-tilting hereditary ring by Proposition \ref{3.3.3}.
\end{proof}

  In the following proposition we will give an equivalent for von Neumann regular ring with $IF$-ring. These rings have been studied by Colby. We recall that a ring R is called $IF$ if every injective left $R$-module is flat.
 
 \begin{proposition}\label{2.4}
 Let $R$ be a  pure-tilting hereditary ring. Then $R$ is von Neumann regular if and only if $R$ is $IF$ and hereditary. 
 \end{proposition}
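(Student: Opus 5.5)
We argue both implications directly, using only the pure-tilting hereditary hypothesis through the facts that $R$ is pure hereditary and coherent (Remark~\ref{REM:3.8}) and, in the von Neumann regular case, Theorem~\ref{2.3}.

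\emph{Forward implication.} Assume $R$ is von Neumann regular. By Theorem~\ref{2.3}, since $R$ is also pure-tilting hereditary, $R$ is hereditary Noetherian. It remains to show that $R$ is $\IF$. Over a von Neumann regular ring every module is flat, because every finitely generated left ideal is a direct summand of $R$, so $\Tor_1^R(R/I,-)$ vanishes for all finitely generated $I$. In particular every injective module is flat, and $R$ is $\IF$. This direction is routine.

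\emph{Converse.} Assume $R$ is $\IF$ and hereditary, and fix a principal left ideal $Ra$. The plan is to show that $Ra$ is a direct summand of $R$, which gives von Neumann regularity. The steps are as follows.

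1. Since $R$ is pure-tilting hereditary, it is coherent by Remark~\ref{REM:3.8}.
2. By Colby's characterization, a coherent $\IF$ ring embeds every finitely presented module in a free module. Hence the cyclic finitely presented module $R/Ra$ embeds in a free module $F$; its image is finitely generated, so we may take $F$ finitely generated.
3. Since $R$ is hereditary, the submodule $R/Ra \subseteq F$ is projective.
4. The sequence $0\to Ra\to R\to R/Ra\to 0$ therefore splits, so $Ra$ is a direct summand of $R$.

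Step 4 for every $a\in R$ is exactly von Neumann regularity.

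The main obstacle is step 2. In this step I would invoke the standard result (Colby) that a left coherent ring is left $\IF$ if and only if every finitely presented left module embeds in a free module. The one-sided conventions must be checked: the $\IF$ condition is on left modules, while Colby's equivalence is sometimes stated using right coherence. If the sides do not match, I would fall back on a more hands-on argument. Since $R$ is $\IF$, the injective hull $E(R/Ra)$ is flat. An embedding of the finitely presented module $R/Ra$ into a flat module then factors through a finitely generated free module, because finitely presented maps into flat modules factor through free modules by Lazard's theorem. That factorization gives the required embedding into a free module, after which steps 3 and 4 go through unchanged.
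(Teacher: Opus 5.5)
Your proof is correct. The forward direction matches the paper's. Only the hereditary half of Theorem~\ref{2.3} is actually needed there, and that half is just the observation that a flat pure-projective left ideal is projective.

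For the converse you take a different route from the paper. The paper argues globally. Over a hereditary ring, $E(M)/M$ is injective for every module $M$, hence flat by the $\IF$ hypothesis. So $0\to M\to E(M)\to E(M)/M\to 0$ is pure exact and every module is absolutely pure, which forces von Neumann regularity. The paper leaves implicit both the injectivity of $E(M)/M$ and the standard fact that the rings over which all modules are absolutely pure are exactly the von Neumann regular rings. It also adds a Noetherian assumption it never uses.

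You work instead with the single cyclic module $R/Ra$. $\IF$ makes $E(R/Ra)$ flat, so the embedding factors through a finitely generated free module. The other characterization of hereditary rings (submodules of projectives are projective) then makes $R/Ra$ projective, so $Ra$ is a direct summand.

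The two proofs use dual forms of the hereditary hypothesis. The paper's is shorter once the absolutely-pure characterization is granted. Yours is more self-contained and checks directly that every principal left ideal is generated by an idempotent.

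Two simplifications are available to you. First, your fallback is exactly the proof of the direction of Colby's theorem you need, and it holds over every ring with matching sides. You can make it the main argument and drop the side-convention concern. Second, step~1 is unnecessary. $R/Ra$ is finitely presented over any ring (it is the cokernel of right multiplication by $a$ on $R$), and the factorization lemma requires no coherence.

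Consequently, as in the paper, the pure-tilting hereditary hypothesis plays no role in the converse: $\IF$ together with hereditary implies von Neumann regular for an arbitrary ring.
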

 
 \begin{proof}
  The direct implication follows by Proposition \ref{2.3} and every $R$-module is flat. Conversely, we assume that $R$ is an $IF$ and hereditary noetherian ring. We need to show that for any $R$-module $M$ is an absolutely pure. Consider an exact sequence $\epsilon \colon 0 \rightarrow M \rightarrow E(M) \rightarrow E(M)/M \rightarrow 0,$ where $E(M)$ is an injective envelope of $M.$ The sequence $\epsilon$ is pure exact since by hypothesis $E(M)/M$ is flat. It follows that $M$ is an absolutely pure. Thus, $R$ is a von Neumann regular ring.
 \end{proof}
 
 In Proposition \ref{2.4}, we get $R$ is noetherian in the direct implication of Proposition \ref{2.3}. Then we have the following
 \begin{corollary}
 Let $R$ be a  pure-tilting hereditary ring. Then $R$ is semisimple if and only if $R$ is $IF$ and hereditary noetherian. 
 \end{corollary}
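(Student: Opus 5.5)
The corollary follows by combining Proposition~\ref{2.4} with Theorem~\ref{2.3}, together with the standard fact that a von Neumann regular Noetherian ring is semisimple. Throughout, $R$ is assumed to be a pure-tilting hereditary ring.

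For the forward direction, suppose $R$ is semisimple. Then every $R$-module is projective, hence flat. In particular every injective module is flat, so $R$ is $\IF$. Every ideal is a direct summand of $R$, hence projective, so $R$ is hereditary. Every ideal is finitely generated, so $R$ is Noetherian. This direction does not even need the pure-tilting hereditary hypothesis. Alternatively, one can note that $R$ is von Neumann regular and argue as in the direct implication of Proposition~\ref{2.4}, then invoke Theorem~\ref{2.3} to get hereditary Noetherian.

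For the converse, suppose $R$ is $\IF$ and hereditary Noetherian. By the converse part of Proposition~\ref{2.4}, $R$ is von Neumann regular; this uses the pure-tilting hereditary hypothesis. Being Noetherian as well, every left ideal of $R$ is finitely generated. In a von Neumann regular ring every finitely generated left ideal is generated by an idempotent, hence is a direct summand of $R$. So every left ideal is a direct summand, and $R$ is semisimple Artinian. Theorem~\ref{2.3} can be cited to confirm consistency: a von Neumann regular pure-tilting hereditary ring is hereditary Noetherian.

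The main obstacle is the converse step through Proposition~\ref{2.4}, namely showing that $\IF$ together with hereditary forces every module to be absolutely pure. The argument there uses that the cokernel $E(M)/M$ is flat. For this I would use the hereditary hypothesis: $E(M)/M$ is a quotient of an injective module, hence injective, since quotients of injectives are injective over a hereditary ring. The $\IF$ hypothesis then makes it flat, so the sequence $0\to M\to E(M)\to E(M)/M\to 0$ is pure. Therefore $M$ is absolutely pure, and every module being absolutely pure characterizes von Neumann regularity. I would make this justification explicit, since Proposition~\ref{2.4}'s proof asserts the flatness of $E(M)/M$ without giving the reason.
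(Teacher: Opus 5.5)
Your proof is correct and follows essentially the paper's route: the paper gets the corollary from Proposition~\ref{2.4} (IF plus hereditary gives von Neumann regular) together with Noetherianity, as you do, differing only in that you check the forward direction directly. Your explanation of why $E(M)/M$ is flat (over a hereditary ring a quotient of an injective module is injective, and IF then makes it flat) supplies the step the paper's proof of Proposition~\ref{2.4} leaves unstated, and since that argument never uses the pure-tilting hereditary hypothesis, your proof gives the equivalence for every ring, despite your remark that the converse step depends on it.
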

 
\section{Pure 1-tilting hereditary ring}

In this section, we investigate pure $1$-tilting hereditary rings and explore their structural relationships with semisimple rings and Dedekind domains. Moreover, within the setting of category theory, we study their correspondence with Grothendieck categories and establish several important results.

We begin our discussion with the following observations.

\begin{lemma} \cite[Corollary ~6.5]{PP:2005} \label{LMA:3.1}
    Every pure projective module over a hereditary noetherian ring is a direct sum of finitely generated modules
\end{lemma}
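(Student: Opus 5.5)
This lemma is cited from \cite[Corollary~6.5]{PP:2005}, so the proof is essentially a reference. Still, a self-contained route would run as follows.

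Let $R$ be hereditary noetherian and $P$ a pure projective module. By Lemma~\ref{2.1}, $P$ is a direct summand of $\bigoplus_{i\in I} F_i$ with each $F_i$ finitely presented; over a noetherian ring these are just the finitely generated modules. The first step is to fix the shape of the $F_i$. Over a hereditary noetherian ring, every finitely generated module $F$ splits as $F\cong t(F)\oplus F/t(F)$. Here $t(F)$ is the largest submodule of finite length (or the torsion part in the prime case), and $F/t(F)$ is a finitely generated submodule of a free module, hence projective because $R$ is hereditary. So $\bigoplus F_i$ is a direct sum of a projective module and a direct sum of finite-length or torsion finitely generated modules.

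The second step is to pass from the sum to its summand $P$. I would use a Kaplansky-type argument. Any direct summand of a direct sum of countably generated modules is again such a direct sum, so we may assume $P$ is countably generated. For the projective part, Kaplansky's theorem together with the hereditary hypothesis gives that projectives are direct sums of finitely generated ideals. That reduces the problem to showing that a countably generated pure projective $P$ is a direct sum of finitely generated modules.

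For that reduction I would write $P=\bigcup_n P_n$ with each $P_n$ finitely generated. Purity, together with the fact that finitely generated modules have endomorphism rings that are semiperfect over the completion, is the natural tool for enlarging each $P_n$ to a finitely generated direct summand $Q_n$ of $P$ with $Q_n\subseteq Q_{n+1}$ and $Q_{n+1}=Q_n\oplus Q_n'$. Then $P=\bigoplus_n Q_n'$.

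The main obstacle is exactly this enlargement step: showing that every finitely generated submodule of $P$ lies in a finitely generated direct summand. This is where Puninski and Rothmaler use the model-theoretic description of pure projectives, namely that pure-injective indecomposables over hereditary noetherian rings are well behaved. Since reproducing that argument is beyond the scope here, we take the lemma directly from \cite[Corollary~6.5]{PP:2005}.
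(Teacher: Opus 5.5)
Your proposal ends where the paper does: the paper gives no argument of its own for this lemma and simply quotes \cite[Corollary~6.5]{PP:2005}, so relying on that reference is the same approach. Your self-contained sketch is extra and, as you note, leaves the enlargement step open, but that step can be avoided. Once your first step is justified beyond the prime case (e.g.\ via Chatters' decomposition of a noetherian hereditary ring into an artinian ring and hereditary noetherian prime rings), $t(P)$ is a direct summand of $\bigoplus_i t(F_i)$, a direct sum of finite-length modules, so the Crawley--J\'onsson--Warfield theorem applies to it; meanwhile $P/t(P)$ is a direct summand of the projective module $\bigoplus_i F_i/t(F_i)$, hence splits off and is a direct sum of finitely generated left ideals by Kaplansky's theorem.
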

 Note that a $1$-tilting module equivalent to a finitely presented $1$-tilting module is called classical. Bazzoni et al. in \cite[Corollary ~2.8]{BH} proved that every pure projective $1$-tilting module is classical if $R$ is a ring over which every pure projective module is a direct sum of finitely presented modules. Then $T$ is classical because $R$ is the right hereditary right Noetherian by Lemma \ref{LMA:3.1} and Proposition \ref{2.5}.

\begin{example}
Let $k$ be a field, and consider the Kronecker quiver and let 
$Q:\quad
1 \overset{\alpha,\beta}{\rightrightarrows} 2$
and let $R=kQ$ be the corresponding path algebra. Since $Q$ is acyclic, $R$ is a hereditary ring. Moreover, $R$ is a finite-dimensional $k$-algebra, and hence it is left and right Noetherian. Let $P(1)$ and $P(2)$ denote the indecomposable projective $R$-modules corresponding to the vertices $1$ and $2$, respectively, and let $S(2)$ be the simple module associated to the vertex $2$. Consider the module $T=P(1)\oplus P(2)\oplus S(2).$ Then $T$ is a pure projective tilting module over the hereditary Noetherian ring $R$. Since $R$ is hereditary, we have $\pd_R(T)\leq 1.$ Further, $\Ext_R^1(T, T) = 0.$ Indeed, the projective summands have trivial first extension groups, and $S(2)$ is injective in this orientation of the Kronecker quiver. Moreover, there exists an exact sequence $0\longrightarrow R\longrightarrow T_0\longrightarrow T_1\longrightarrow 0$ with $T_0,T_1\in \Add(T)$. Hence $T$ is a $1$-tilting module. Finally, since every finitely generated module over an Artin algebra is pure projective and $T$ is finitely generated, it follows that $T$ is a pure projective tilting module over the hereditary Noetherian ring $R$.
\end{example}

The corollary \cite[Corollary ~2.8]{BH} motivates us to prove the following
 
 \begin{proposition}
 Let $R$ be a pure $1$-tilting hereditary  ring over which every pure projective module is a direct sum of finitely presented modules, then every pure projective $1$-tilting module is classical and $R$ is semihereditary if and only if $R$ is hereditary noetherian.
 \end{proposition}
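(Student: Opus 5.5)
The statement has two parts, and I would handle them separately. The first part is that every pure projective $1$-tilting module is classical. The second is the equivalence ``$R$ semihereditary $\iff$ $R$ hereditary noetherian''.

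\emph{Classicality.} This should follow directly from \cite[Corollary ~2.8]{BH}. The hypothesis that every pure projective module is a direct sum of finitely presented modules is exactly the hypothesis of that corollary. Hence every pure projective $1$-tilting module $T$ is equivalent to a finitely presented $1$-tilting module, i.e.\ it is classical. No use of the pure $1$-tilting hereditary assumption is needed here.

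\emph{Hereditary noetherian $\Rightarrow$ semihereditary.} This direction is immediate: if every ideal is projective, then in particular every finitely generated ideal is projective.

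\emph{Semihereditary $\Rightarrow$ hereditary noetherian.} I would take an arbitrary ideal $I$. Since $R$ is pure $1$-tilting hereditary, $I$ is a pure projective $1$-tilting module. By the standing hypothesis, $I\cong\bigoplus_{\lambda}F_\lambda$ with each $F_\lambda$ finitely presented. Each $F_\lambda$ is isomorphic to a direct summand of $I$, hence to a finitely generated ideal of $R$. Semihereditary then makes each $F_\lambda$ projective, so $I$ is projective and $R$ is hereditary.

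For noetherianity I would use the classical part. $I$ is equivalent to a finitely presented $1$-tilting module $T'$, meaning $\Add(I)=\Add(T')$. The plan is to show the decomposition $I=\bigoplus F_\lambda$ has only finitely many nonzero summands. A direct sum of nonzero left ideals inside $R$ that sits in $R$ as an ideal and is tilting should be forced to be finite, because $1\in R$ and the coresolution $0\to R\to T_0\to T_1\to 0$ can be reduced inside $\Add(T')$ to finitely many summands. Once every ideal is a finite direct sum of finitely generated modules, every ideal is finitely generated, and $R$ is noetherian.

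The main obstacle is exactly this finiteness step. An infinite direct sum of finitely generated ideals is not automatically excluded, since the tilting condition only constrains $\Add(I)$. If the argument via $\Add(I)=\Add(T')$ fails, I would instead combine hereditary with Remark~\ref{REM:3.8} (coherence) and with Proposition~\ref{2.5}: I would show that each ideal, being both projective and tilting with a finitely presented equivalent, is a summand of a finite sum of copies of $T'$, and hence is finitely generated.
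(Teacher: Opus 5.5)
The parts you get right: your split of the statement, classicality via Corollary~2.8 of \cite{BH}, the trivial direction, and semihereditary $\Rightarrow$ hereditary. For the last, each finitely presented summand $F_\lambda$ of $I$ is a finitely generated left ideal, hence projective. The paper reaches projectivity of ideals from ``finitely presented and flat'' instead.

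The genuine gap is the noetherian step, and neither of your routes closes it. Classicality of $I$ only gives $\Add(I)=\Add(T')$ with $T'$ finitely presented. So $I$ is a summand of $(T')^{(\kappa)}$ for some possibly infinite $\kappa$, not of a finite sum; for example, $R^{(\omega)}$ is classical. Cutting the coresolution $0\to R\to T_0\to T_1\to 0$ down to finitely many summands only reflects that $R$ is finitely generated. It puts no bound on the number of summands of $I$. Coherence does not help either, since regular rings are coherent. The paper's own proof fails at the same point: it asserts that every ideal is finitely presented \emph{because} every pure projective $1$-tilting module is classical. That is, it conflates ``classical'' with ``finitely presented''.

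In fact the step cannot be repaired, because the implication is false. Here ``every ideal'' must be read as ``every nonzero left ideal'', since $0$ is never tilting over $R\neq 0$.

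Let $k$ be a countable field and $R$ the direct limit of the rings $M_{2^n}(k)$ along $A\mapsto \operatorname{diag}(A,A)$. Then $R$ is simple, von Neumann regular and countable.
\begin{itemize}
\item \emph{Ideals are projective.} Every left ideal is countably generated, so it is the union of an ascending chain of finitely generated left ideals. Each of these is a direct summand of $R$, hence of the next one, so the union is projective.
\item \emph{Ideals are pure projective $1$-tilting.} Since $R$ is simple, the trace ideal of a nonzero projective left ideal is $R$. So every nonzero left ideal is a projective generator, hence pure projective $1$-tilting.
\item \emph{The decomposition hypothesis holds.} Over a regular ring, pure projective modules are projective. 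By Kaplansky's theorem plus the same summand argument, every projective module is a direct sum of finitely generated ones. Consequently every pure projective $1$-tilting module is classical (equivalent to $R$).
\item \emph{$R$ is semihereditary but not noetherian.} $R$ is hereditary, in particular semihereditary. But the images $e_n\in R$ of the matrix units $E_{11}\in M_{2^n}(k)$ form a strictly decreasing chain of idempotents. So $\bigoplus_{n} R(e_n-e_{n+1})$ is a left ideal that is not finitely generated, yet it is pure projective $1$-tilting.
\end{itemize}
This is exactly the configuration your finiteness step was meant to exclude. So the noetherian conclusion does not follow from the stated hypotheses, and no argument along your lines (or the paper's) can supply it.
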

 
 \begin{proof}
 By hypothesis, every ideal is pure projective $1$-tilting. We assume that $R$ is a semihereditary ring and every pure projective $1$-tilting module is classical. It follows that every ideal is finitely presented (hence pure projective) flat module. Thus, $R$ is hereditary noetherian. The converse part is clear by \cite[Corollary ~2.8]{BH} and \cite[Corollary ~6.5]{PP:2005}.
 \end{proof}
 
  Note that a commutative ring $R$ is coherent if and only if every $R$-module has a flat preenvelope \cite[Theorem ~2.5.1]{xu}. Then we have the following
 
 \begin{theorem}\label{2.2}
 Let $R$ be a commutative ring. Then $R$ is a  pure $1$-tilting hereditary ring if and only if $R$ is hereditary noetherian. 
 \end{theorem}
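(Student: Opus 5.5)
For the forward direction, assume $R$ is commutative and pure $1$-tilting hereditary, so every ideal $I$ of $R$ is pure projective and $1$-tilting. The first step is coherence. By Remark~\ref{REM:3.8}, every pure $1$-tilting hereditary ring is pure hereditary and therefore coherent. By \cite[Theorem~2.5.1]{xu}, quoted just before the statement, every $R$-module then has a flat preenvelope, which we keep in reserve.

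The second step, and the one I expect to be the main obstacle, is to show that every ideal is projective. A $1$-tilting module has projective dimension at most $1$, so each ideal $I$ admits a projective resolution $0\to P_1\to P_0\to I\to 0$. Since $R$ is commutative, the torsion class $\Gen(I)=I^{\perp}$ contains $R$ by condition (3) of the tilting definition, so $I$ generates $R$ up to $\Add(I)$.

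I would first try to show that $I$ is flat. For a finitely generated ideal this is immediate: by Proposition~\ref{2.5} it is finitely presented, and its tilting class is of finite type. The flat preenvelope $I\to F$ should then factor through the inclusion $I\hookrightarrow R$, making $I$ a pure submodule. Once flatness holds, the argument of Theorem~\ref{2.3} applies verbatim. A flat pure projective module is projective, because any free presentation $0\to K\to F\to I\to 0$ is pure exact and so splits by Lemma~\ref{2.1}. Hence $R$ is hereditary. If the flat-preenvelope route fails, my fallback is to use commutativity: a projective-dimension-one tilting ideal over a commutative coherent ring should be locally principal, hence invertible, hence projective.

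The third step is Noetherianity. With $R$ hereditary, hence semihereditary, every pure projective module is a direct summand of a direct sum of finitely presented modules. I would then invoke the preceding Proposition together with \cite[Corollary~2.8]{BH}: every pure projective $1$-tilting module, in particular every ideal, is classical. A classical tilting module is finitely presented, so every ideal is finitely generated and $R$ is Noetherian. The technical point to check is that the decomposition hypothesis of \cite{BH} holds, namely that pure projectives decompose into finitely presented pieces. I would justify this via Lemma~\ref{LMA:3.1}; to avoid circularity, one can apply Lemma~\ref{LMA:3.1} only after localizing at each prime and using that $R_\mathfrak p$ is a valuation domain or a field.

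For the converse, let $R$ be commutative hereditary Noetherian. Every ideal $I$ is finitely generated and projective, hence finitely presented and pure projective by Proposition~\ref{2.5}, and $\pd_R I\le 1$. Since $I$ is projective, $\Ext^1_R(I,I^{(\kappa)})=0$. It remains to produce a sequence $0\to R\to T_0\to T_1\to 0$ with $T_i\in\Add(I)$. For nonzero $I$ in a Dedekind-type domain, $I$ is invertible and hence a progenerator, so $R\in\Add(I)$ and the sequence is trivial. In general, $R$ is a finite product of Dedekind domains and fields, and I would work componentwise, checking that each ideal meets every component, so that $I$ is a generator, and concluding that $I$ is tilting.
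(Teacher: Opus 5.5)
Your forward direction has a genuine gap at Step 2, which is where the work lies.

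\textbf{Step 2 (projectivity of ideals).} The assertion that $R\in\Gen(I)=I^{\perp}$ ``by condition (3)'' is not justified. Condition (3) only embeds $R$ into some $T_0\in\Add(I)$. By contrast, $R\in I^{\perp}$ would give $\Ext^1_R(I,F)=0$ for every free $F$, and together with $\pd I\le 1$ this already makes $I$ projective. So that assertion is equivalent to your goal, not a consequence of the definition.

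The flat-preenvelope step is not an argument either. The preenvelope property factors the inclusion $I\hookrightarrow R$ through $I\to F$, not the other way round, and nothing in your argument produces purity. Worse, it would prove too much: if a finitely generated $I$ were pure in $R$, then $R/I$ would be finitely presented and flat, hence projective. Every finitely generated ideal would then be a summand, so $R$ would be von Neumann regular. That excludes $\mathbb Z$, whose nonzero ideals are all pure projective $1$-tilting.

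The ``locally principal'' fallback is only a hope. The missing ingredient is \cite[Theorem~3.7]{BH}: over a commutative ring every pure projective $1$-tilting module is projective. The paper applies this directly to the ideals and gets $R$ hereditary in one line.

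\textbf{Step 3 (Noetherianity).} This also fails as written. In \cite[Corollary~2.8]{BH}, ``classical'' means $\Add$-equivalent to a finitely presented tilting module; for instance, $R^{(\mathbb N)}$ is classical. So that corollary gives no finite generation. Its direct-sum decomposition hypothesis also cannot be obtained by localizing, since decompositions do not glue.

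A correct finish runs as follows. A projective $1$-tilting $I$ is a generator, because $0\to R\to T_0\to T_1\to 0$ splits when $T_1$ is projective. Hence $\sum_j f_j(x_j)=1$ for some $f_j\in\Hom_R(I,R)$ and $x_j\in I$. Commutativity then gives $y=\sum_j yf_j(x_j)=\sum_j f_j(x_jy)=\sum_j x_jf_j(y)$ for all $y\in I$. Thus $I=(x_1,\dots,x_n)$, and $R$ is Noetherian.

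\textbf{The converse.} This cannot be completed, because the step you postpone is false, and with it the converse as stated. That step is the claim that every ideal meets every factor of $R$ and is therefore a generator. Let $R=k\times k$ for a field $k$; it is commutative and semisimple, hence hereditary Noetherian. The ideal $I=k\times 0$ is projective, but every module in $\Add(I)$ is killed by $(0,1)$. So $R$ embeds in no $T_0\in\Add(I)$, and $I$ is not tilting. Likewise the zero ideal is never tilting over a nonzero ring.

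Over a hereditary ring an ideal is $1$-tilting exactly when it is a projective generator. So the implication can hold only when every nonzero ideal is a generator, e.g.\ for Dedekind domains, where your invertibility argument is fine. The paper's own converse does not get around this. It claims that hereditary Noetherian rings are von Neumann regular, which is false for $\mathbb Z$. It then tacitly treats every ideal as a projective generator, which is false for $k\times 0\subset k\times k$.
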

 
 \begin{proof}
 We assume that $R$ is a pure $1$-tilting hereditary ring. By \cite[Theorem ~3.7]{BH}, every pure projective $1$-tilting module is projective. By hypothesis, every ideal is projective. Hence $R$ is a hereditary ring. Also, $R$ is noetherian since every ideal is finitely presented by \cite[Theorem ~3.7]{BH}. Conversely, if $R$ is a hereditary noetherian ring, then $w.gl. \dim (R) = gl.\dim (R) \leq \infty.$ By \cite[Theorem ~2.5.1, Theorem ~4.5.3]{xu} and Proposition \ref{2.0.2}, $R$ is a von Neumann regular ring. Then the class of all tilting modules coincide with the class of all projective generators. Thus, every ideal is pure projective tilting.  The semihereditary ring coincide with the hereditary ring over a noetherian ring. It follows that every tilting module has projective dimension is $\leq 1.$  Hence every ideal is pure projective $1$-tilting.
 \end{proof}

 According to Theorem \ref{2.2}, every  pure-tilting hereditary ring is necessarily pure $1$-tilting hereditary on a commutative ring. Moreover, a semisimple ring is exactly a Noetherian von Neumann regular ring. These facts lead to the following result.
 
 \begin{corollary}\label{2.0.3}
 Let $R$ be a commutative ring. Then a ring $R$ is pure $1$-tilting hereditary  if and only if $R$ is semisimple.  
 \end{corollary}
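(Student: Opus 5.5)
The plan is to combine Theorem~\ref{2.2} with the characterization of semisimple rings as exactly the Noetherian von Neumann regular rings, as indicated just before the statement.

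For the forward direction, assume $R$ is a commutative pure $1$-tilting hereditary ring. Theorem~\ref{2.2} then gives that $R$ is hereditary Noetherian. The proof of Theorem~\ref{2.2} in fact goes further. Over a commutative hereditary Noetherian ring, it uses \cite[Theorem~2.5.1, Theorem~4.5.3]{xu} together with Proposition~\ref{2.0.2} (coherence) to conclude that $R$ is von Neumann regular. I would extract this step explicitly, so that $R$ is both Noetherian and von Neumann regular.

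It remains to show that such a ring is semisimple. Every ideal $I$ is finitely generated because $R$ is Noetherian. In a von Neumann regular ring every finitely generated ideal is generated by an idempotent, so $I=Re$ is a direct summand of $R$. Hence $R$ is semisimple.

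For the converse, assume $R$ is semisimple. Then $R$ is Noetherian and von Neumann regular, so the Example following the definition shows that $R$ is pure-tilting hereditary. Every ideal is a direct summand of $R$, hence finitely generated projective, hence pure projective, and it has projective dimension $0\le 1$. One also needs each ideal to be a generator, so that it is a ($1$-)tilting module in the sense of the definition. Here I would rely on the fact used in Theorem~\ref{2.2}: over a von Neumann regular ring, tilting modules coincide with projective generators \cite[Corollary~6.2.4]{RGT:2006}. Alternatively, one can simply invoke Theorem~\ref{2.2}'s converse, since a semisimple ring is hereditary Noetherian.

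The main obstacle is the forward step from ``hereditary Noetherian'' to ``von Neumann regular.'' This is not true in general, since $\mathbb{Z}$ is a counterexample. So the argument must genuinely use the content of Theorem~\ref{2.2}'s proof, or \cite[Theorem~3.7]{BH}. That theorem says pure projective $1$-tilting modules are projective, and each ideal is tilting, hence in particular a generator-type object.

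I would try to show that every cyclic module $R/I$ is flat, which gives von Neumann regularity. The idea is to use that each ideal $I$ is a projective tilting module, so $\Add(I)$ contains $R$. Combined with the commutative Noetherian structure, this should force $I$ to be a direct summand of $R$. If that route fails, I would fall back on citing Theorem~\ref{2.2}'s proof verbatim, as the paper's remark preceding the corollary suggests.
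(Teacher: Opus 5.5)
Your plan is the paper's own route: Theorem~\ref{2.2} combined with ``semisimple $=$ Noetherian von Neumann regular''. It cannot be completed, because the statement is false in both directions. The obstacle you flagged is not a technicality.

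\emph{Forward direction.} You note that $\mathbb{Z}$ is hereditary Noetherian but not von Neumann regular. But $\mathbb{Z}$ also satisfies the hypothesis of the corollary. Every nonzero ideal $n\mathbb{Z}\cong\mathbb{Z}$ is free of rank one, hence pure projective. It has projective dimension $0\le 1$ and $\Ext^i_{\mathbb{Z}}(\mathbb{Z},\mathbb{Z}^{(\kappa)})=0$, and $0\to\mathbb{Z}\to\mathbb{Z}\to 0$ is the required coresolution. So $\mathbb{Z}$ is pure $1$-tilting hereditary and not semisimple. The paper's own $k[x]$ example and Lemma~\ref{3.17} also give counterexamples, since $0$-tilting implies $1$-tilting. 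The step you want to extract from the proof of Theorem~\ref{2.2} (hereditary Noetherian $\Rightarrow$ von Neumann regular) is false. Your proposed repair also fails: $2\mathbb{Z}$ is a projective tilting ideal with $\mathbb{Z}\in\Add(2\mathbb{Z})$, yet it is not a direct summand of $\mathbb{Z}$. Being a projective generator does not make an ideal a summand.

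\emph{Converse.} \cite[Corollary~6.2.4]{RGT:2006} says that tilting modules over a von Neumann regular ring are exactly the projective generators. It does not say that every projective ideal is a generator. In $R=k\times k$, take $e=(1,0)$. The ideal $Re$ is annihilated by $1-e$, hence so is every module in $\Add(Re)$. So $R$ cannot embed in any $T_0\in\Add(Re)$, and $Re$ is not tilting. Invoking the converse of Theorem~\ref{2.2} does not help, since $k\times k$ is hereditary Noetherian; that theorem inherits the same error.

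More generally, every tilting module is faithful. So if all nonzero ideals of a commutative ring are tilting, the ring is a domain: if $ab=0$ with $a\neq 0$, then $b$ kills $aR$, so $b=0$. The zero ideal has to be excluded from the definition, otherwise no nonzero ring qualifies.

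\emph{What is provable.} Granting \cite[Theorem~3.7]{BH} as the paper does (pure projective $1$-tilting modules over commutative rings are projective), a commutative ring is pure $1$-tilting hereditary if and only if every nonzero ideal is a projective generator. That holds if and only if it is a domain whose nonzero ideals are invertible, i.e.\ a Dedekind domain, fields included. Semisimple rings appear only in the overlap with fields.
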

 
 \begin{proposition}
 Let $R$ be a commutative ring. Then the following are equivalent:
 \begin{enumerate}
     \item $R$ is a pure $1$-tilting hereditary ring. 
     \item $R$ is a hereditary noetherian ring.
     \item $R$ is a hereditary Artinian ring.
     \item $R$ is a semisimple ring. 
 \end{enumerate}
 \end{proposition}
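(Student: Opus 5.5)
The plan is to assemble the four conditions into a cycle of implications, using Theorem~\ref{2.2} and Corollary~\ref{2.0.3} for the non-elementary steps and standard ring theory for the others. Concretely, I would prove $(1)\Leftrightarrow(2)$, then $(1)\Rightarrow(4)\Rightarrow(3)\Rightarrow(2)$. Together these close the loop.

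For $(1)\Leftrightarrow(2)$ I would simply invoke Theorem~\ref{2.2}, which is stated for an arbitrary commutative ring $R$. For $(1)\Rightarrow(4)$ I would quote Corollary~\ref{2.0.3}, which identifies commutative pure $1$-tilting hereditary rings with semisimple rings.

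For $(4)\Rightarrow(3)$, a semisimple ring is Artinian by Wedderburn--Artin. It is also hereditary, since every module over it is projective, and in particular every ideal is. For $(3)\Rightarrow(2)$, the Hopkins--Levitzki theorem gives that an Artinian ring is Noetherian, so a hereditary Artinian ring is hereditary Noetherian.

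The main obstacle is that the only route to $(4)$ from the other conditions runs through Corollary~\ref{2.0.3}. That corollary in turn rests on the argument in the proof of Theorem~\ref{2.2} that hereditary Noetherian forces von Neumann regularity. Since von Neumann regular plus Noetherian is exactly semisimple, that step carries the whole weight of $(2)\Rightarrow(4)$. I would take Corollary~\ref{2.0.3} as given, as the paper allows.

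I would nevertheless flag that this step needs care. A hereditary Noetherian domain such as $\mathbb Z$ is not semisimple, and its nonzero ideals $n\mathbb Z\cong\mathbb Z$ are finitely generated projective, hence pure projective tilting. A fully independent proof of $(2)\Rightarrow(4)$ would therefore have to use some additional hypothesis implicit in the definition, for instance that every ideal, including the zero ideal, be tilting; this would force $R$ to be semisimple.
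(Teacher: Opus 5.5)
Your decomposition is essentially the paper's. It also takes $(1)\Leftrightarrow(2)$ from Theorem~\ref{2.2} and $(1)\Leftrightarrow(4)$ from Corollary~\ref{2.0.3}. It differs only in citing \cite[Proposition~2.14]{MD} for $(3)\Leftrightarrow(4)$, where you use Wedderburn--Artin and Hopkins--Levitzki; that is fine for $(4)\Rightarrow(3)\Rightarrow(2)$.

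This still does not give a proof, because the proposition is false, and your own example shows it. $\mathbb{Z}$ is hereditary Noetherian and not semisimple, so $(2)\Rightarrow(4)$ fails outright. If (1) is read as a condition on nonzero ideals, $\mathbb{Z}$ also refutes $(1)\Rightarrow(4)$, exactly as you observe. The faulty step is in the converse half of the proof of Theorem~\ref{2.2}, which asserts that a hereditary Noetherian ring is von Neumann regular. That is false, since $\operatorname{w.gl.dim}\mathbb{Z}=1$. Once you have a counterexample to the conclusion, you cannot ``take Corollary~\ref{2.0.3} as given''. An argument resting on a lemma you have just refuted proves nothing, and the right verdict is that the statement cannot be proved as stated.

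Your proposed repair does not work either. Conditions (2) and (4) are purely ring-theoretic, so no reinterpretation of the definition in (1) can rescue $(2)\Rightarrow(4)$. Requiring the zero ideal to be tilting does not force semisimplicity. Over a nonzero ring the zero module is never tilting, since $R$ would need a coresolution by modules in $\Add(0)=\{0\}$. So that reading makes (1) hold only for $R=0$ and destroys $(4)\Rightarrow(1)$ for every field.

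The link $(2)\Rightarrow(1)$ that closes your cycle also fails under the natural reading (``every nonzero ideal''). For $R=k\times k$, every module in $\Add(k\times 0)$ is annihilated by $(0,1)$. So $R$ admits no monomorphism into such a module, and the ideal $k\times 0$ is not tilting, although $R$ is semisimple. Under that reading, \cite[Theorem~3.7]{BH} makes every nonzero ideal a projective generator, which forces $R$ to be a Dedekind domain (possibly a field). Thus (1) is strictly contained in (2) and incomparable with (4). Of the claimed equivalences only $(3)\Leftrightarrow(4)$ survives, together with $(1)\Rightarrow(2)$ and $(4)\Rightarrow(2)$.
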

 
 \begin{proof}
  $(1) \Leftrightarrow (2)$ follows from Theorem \ref{2.2},  $(1) \Leftrightarrow (4)$ follows from Corollary \ref{2.0.3} and $(3) \Leftrightarrow (4)$ follows by \cite[Propositoin ~2.14]{MD}.
 \end{proof}

 \begin{lemma}\label{3.17}
Let $R$ be a commutative domain. Then $R$ is a Dedekind domain if and only if $R$ is a pure $0$-tilting hereditary ring.
\end{lemma}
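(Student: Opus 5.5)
The plan is to prove both directions directly from the definition of $0$-tilting module, following the convention of the Jordan quiver example above: ``every ideal'' is read as ``every nonzero ideal'', since the zero ideal is never a generator. We first record what $n=0$ means in the definition of $n$-tilting. Condition (1) says $\pd_R(T)=0$, so $T$ is projective. Condition (2) is then automatic. Condition (3) asks for an exact sequence $0\to R\to T_0\to\cdots\to T_r\to 0$ with all $T_i\in\Add(T)$. Since these $T_i$ are projective, the sequence splits from the right, so $R$ is a direct summand of $T_0$. Hence $T$ is a projective generator. Conversely, a projective generator $T$ is $0$-tilting: if $R$ is a summand of $T^{(\kappa)}$, the split sequence $0\to R\to T^{(\kappa)}\to T^{(\kappa)}/R\to 0$ has both nonzero terms in $\Add(T)$. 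So a commutative domain $R$ is pure $0$-tilting hereditary exactly when every nonzero ideal is a pure projective projective generator.

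($\Rightarrow$) Let $R$ be Dedekind and $0\neq I$ an ideal. Then $I$ is invertible, $II^{-1}=R$. So there are $a_1,\dots,a_n\in I$ and $b_1,\dots,b_n\in I^{-1}$ with $\sum a_ib_i=1$. The maps $f_i\colon I\to R$, $x\mapsto b_ix$, together with the $a_i$ form a finite dual basis. Hence $I$ is finitely generated projective, and in particular pure projective by Lemma~\ref{2.1}(3). The map $I^n\to R$, $(x_i)\mapsto \sum b_ix_i$, is surjective because it hits $1$. Since $R$ is projective, this map splits, so $R$ is a direct summand of $I^n$ and $I$ is a generator. By the preliminary observation, $I$ is a pure projective $0$-tilting module.

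($\Leftarrow$) Suppose every nonzero ideal $I$ of the domain $R$ is pure projective $0$-tilting. By the preliminary observation, $I$ is projective. A nonzero projective ideal of a domain is invertible, by the standard dual basis argument. Take a dual basis $\{x_j, f_j\}$. Each $f_j\colon I\to R$ is multiplication by an element $q_j$ of the fraction field, namely $q_j=f_j(x)/x$ for any $0\neq x\in I$. Only finitely many $q_j$ are nonzero, because for a fixed $x\neq 0$ only finitely many $f_j(x)$ are nonzero. Then $x=\sum_j q_jx\,x_j$ gives $\sum_j q_jx_j=1$. So $q_j\in I^{-1}$ and $II^{-1}=R$. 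Thus every nonzero ideal is invertible, and $R$ is a Dedekind domain. Note that invertibility also yields finite generation, so the Noetherian property need not be checked separately.

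The main obstacle is foundational rather than technical. One must fix the convention on the zero ideal, and justify carefully that the long sequence in condition (3) with projective terms forces $R\in\operatorname{add}/\Add(T)$. With this in hand, the remaining steps are the classical equivalence ``invertible $\Leftrightarrow$ projective'' for nonzero ideals of a domain. Pure projectivity plays essentially no role in this argument: it follows from projectivity in the forward direction and is not needed in the converse.
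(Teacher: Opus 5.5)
Your proof is correct and follows the same route as the paper: identify $0$-tilting modules with projective generators, then use that a nonzero ideal of a domain is projective (and a generator) exactly when it is invertible, with pure projectivity playing no real role. You fill in details the paper only cites. Your description of $0$-tilting modules as projective generators is more accurate than the paper's ``progenerators,'' and your converse needs only projectivity, with finite generation coming out of invertibility.
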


\begin{proof}
Recall that a module is $0$-tilting if and only if it is a progenerator. Suppose that $R$ is a Dedekind domain. Then every nonzero ideal of $R$ is invertible. In particular, every nonzero ideal is finitely generated and projective, and satisfies $IJ = R$ for some ideal $J$. Hence, every nonzero ideal is a generator of $R$-Mod. Therefore, every nonzero ideal is a progenerator and thus a $0$-tilting module. Since every ideal of a Dedekind domain is projective, it is also pure-projective. Hence $R$ is a pure $0$-tilting hereditary ring. Conversely, assume that $R$ is a pure $0$-tilting hereditary ring, i.e., every nonzero ideal of $R$ is a pure-projective $0$-tilting module. In particular, every nonzero ideal is a progenerator. Hence, every nonzero ideal is finitely generated, projective, and a generator. In particular, every nonzero ideal is invertible. It is a standard characterization that a commutative domain in which every nonzero ideal is invertible is a Dedekind domain. Hence, $R$ is a Dedekind domain. This completes the proof.
\end{proof}

In the previous lemma, we established equivalent conditions for a hereditary ring to be pure $0$-tilting, as well as for a ring to be a Dedekind domain. We now turn our attention to the general case of pure-tilting hereditary rings.

\begin{proposition}\label{3.18}
Let $R$ be a commutative domain. Then $R$ is a pure-tilting hereditary ring if and only if $R$ is a Dedekind domain.
\end{proposition}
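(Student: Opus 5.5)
The argument splits into the two implications, with Lemma~\ref{3.17} as the bridge. For the easy direction, suppose $R$ is a Dedekind domain. By Lemma~\ref{3.17}, $R$ is pure $0$-tilting hereditary: every nonzero ideal is a finitely generated projective generator, hence pure projective and $0$-tilting. A $0$-tilting module satisfies the definition of an $n$-tilting module for every $n$. Condition (1) holds since projective dimension is $0$. Condition (2) holds since $\Ext$ vanishes out of a projective. Condition (3) holds via the sequence $0\to R\to R^m\to P\to 0$; alternatively one notes $R\in\Add(I)$ because $I$ is a generator and finitely generated projective. Hence every nonzero ideal is pure projective tilting, and $R$ is pure-tilting hereditary. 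The zero ideal is handled by the same convention the paper tacitly uses in Lemma~\ref{3.17}, namely that only nonzero ideals are considered.

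For the converse, assume $R$ is a commutative domain that is pure-tilting hereditary, and fix a nonzero ideal $I$. The strategy is to show that $I$ is finitely generated projective, hence invertible, so that the standard characterization quoted in Lemma~\ref{3.17} applies.

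\begin{itemize}
\item[(a)] By Remark~\ref{REM:3.8}, $R$ is coherent. Since $I$ is pure projective, it is a direct summand of a direct sum of finitely presented modules.
\item[(b)] To get projectivity, I would use that $I$ is a tilting module over a commutative ring. By \cite[Theorem~3.7]{BH}, pure projective tilting modules over commutative rings are projective; this is the same input used in Theorem~\ref{2.2}, there in the $1$-tilting case. Since $I$ is an ideal, the tilting class $I^{\perp}$ and the pure-projectivity should let the same argument run in general. So $I$ is projective.
\item[(c)] A projective ideal of a domain is invertible, and invertible ideals are finitely generated.
\item[(d)] Hence every nonzero ideal is invertible, and $R$ is Dedekind.
\end{itemize}

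Alternatively, once $I$ is known to be projective of rank one, it is already a progenerator, so $R$ is pure $0$-tilting hereditary and Lemma~\ref{3.17} finishes the proof directly.

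The main obstacle is step (b): passing from ``pure projective tilting'' to ``projective'' for an ideal of a domain. The cited result of \cite{BH} is stated for $1$-tilting modules. My first attempt would be to reduce to that case, arguing that a nonzero ideal of a domain is torsion-free of rank one, so its tilting class is determined by a $1$-dimensional localization datum. Failing that, I would argue directly. Writing $I=\bigoplus$ of finitely presented summands up to a summand, I would use that $I$ is indecomposable of rank one over a domain to show $I$ is a summand of a single finitely presented module, hence finitely generated. Then a finitely generated flat-like tilting ideal over the coherent domain is finitely presented, and the tilting condition $\Ext^1_R(I,I^{(\kappa)})=0$ together with rank considerations forces local freeness, hence projectivity.
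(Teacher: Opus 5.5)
\textbf{The forward implication has a gap at step (b).} Your reverse implication is fine. It is also more direct than the paper's, which goes through Theorem~\ref{2.2}. A nonzero ideal $I$ of a Dedekind domain is a finitely generated projective generator, so $R\in\Add(I)$, and $I$ is pure projective and $0$-tilting. (The sequence $0\to R\to R^m\to P\to 0$ you first mention is not what is needed, since its terms must lie in $\Add(I)$. The observation $R\in\Add(I)$ is the right justification.)

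The problem in step (b) is this. The hypothesis only says that each nonzero ideal is tilting of \emph{some} finite projective dimension. The result you invoke, \cite[Theorem~3.7]{BH}, is quoted in the paper only for $1$-tilting modules, as you note yourself. Saying ``the same argument should run in general'' does not show that $I$ is projective. Following the paper does not close this either. Its proof gets ``hereditary noetherian'' from Theorem~\ref{2.2}, whose hypothesis is pure $1$-tilting hereditary, so it relies on the same unproved passage from tilting to $1$-tilting.

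\textbf{How your fallback can be completed.} Your fallback is the right route and avoids \cite{BH} altogether, but both of its steps are asserted rather than proved.

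First step. Indecomposability gives no finiteness by itself. The operative fact is that every endomorphism of a nonzero ideal of a domain is multiplication by an element of the fraction field.
\begin{itemize}
\item Take $\iota\colon I\to\bigoplus_j F_j$ and $\pi\colon\bigoplus_j F_j\to I$ with $F_j$ finitely presented and $\pi\iota=1_I$.
\item Fix $0\neq a\in I$. Let $F'$ be the finite subsum containing $\iota(a)$, with projection $p$ and inclusion $e$, and set $\phi=\pi e p\iota\in\End_R(I)$.
\item Then $\phi(a)=a$, and for $x\in I$ you get $a\phi(x)=\phi(ax)=x\phi(a)=xa$. Hence $\phi=1_I$, and $I$ is a direct summand of $F'$.
\end{itemize}
Thus every nonzero ideal is finitely presented, and $R$ is noetherian.

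Second step. Replace ``rank considerations'' by an Ext-nonvanishing argument.
\begin{itemize}
\item For a maximal ideal $\mathfrak m$, put $k=R_{\mathfrak m}/\mathfrak m R_{\mathfrak m}$. Then $M=I_{\mathfrak m}$ is finitely generated over the noetherian local ring $R_{\mathfrak m}$.
\item Tilting condition (1) gives $p=\pd M<\infty$. Condition (2) with $\kappa=1$ gives $\Ext^i(M,M)\cong\Ext^i_R(I,I)_{\mathfrak m}=0$ for $i\geq 1$.
\item Suppose $p\geq 1$. The sequence $0\to\mathfrak m M\to M\to M/\mathfrak m M\to 0$, together with $\Ext^{p+1}(M,-)=0$, yields a surjection $\Ext^p(M,M)\to\Ext^p(M,k)^{\mu}$ with $\mu\geq 1$.
\item Here $\Ext^p(M,k)\neq 0$ by minimality of the free resolution, which is a contradiction.
\end{itemize}
Hence $I$ is locally principal, so invertible, and your steps (c)--(d) finish the proof.
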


\begin{proof}
Assume first that $R$ is a  pure-tilting hereditary ring. By Theorem~\ref{2.2}, $R$ is hereditary and noetherian. Since $R$ is commutative and hereditary, every ideal of $R$ is projective; hence $R$ is a Pr\"ufer domain. Moreover, hereditary rings have a weak global dimension at most one, so $\operatorname{w.gl.dim}(R)\leq 1.$
Because $R$ is also noetherian, it follows that $\operatorname{Kdim}(R)\leq 1.$ Therefore, $R$ is a Noetherian Pr\"ufer domain of Krull dimension at most one. Equivalently, $R$ is a Dedekind domain. Conversely, assume that $R$ is a Dedekind domain. Since every ideal of a Dedekind domain is projective, $R$ is hereditary. Moreover, Dedekind domains are noetherian domains. Hence, $R$ is hereditary noetherian. Applying Theorem~\ref{2.2}, we conclude that $R$ is pure-tilting hereditary.
\end{proof}

Combining Lemma~\ref{3.17} with Proposition~\ref{3.18}, we obtain the following characterization of the Dedekind domains.

\begin{corollary}
    Let $R$ be a commutative domain. Then the following conditions are equivalent.
\begin{enumerate}
    \item $R$ is a Dedekind domain;
    \item $R$ is a pure-tilting hereditary ring;
    \item $R$ is a pure $0$-tilting hereditary ring.
\end{enumerate}
\end{corollary}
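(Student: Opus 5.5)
The plan is to close the triangle of implications $(1)\Rightarrow(2)\Rightarrow(3)\Rightarrow(1)$, using almost nothing beyond Lemma~\ref{3.17} and Proposition~\ref{3.18}.

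\emph{Step one: $(1)\Leftrightarrow(2)$.} Proposition~\ref{3.18} states this verbatim for a commutative domain, so we simply quote it.

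\emph{Step two: $(1)\Leftrightarrow(3)$.} Lemma~\ref{3.17} gives this verbatim under the same standing hypothesis that $R$ is a commutative domain, so again we quote it.

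\emph{Step three: assemble.} We now have $(2)\Rightarrow(1)\Rightarrow(3)$ and $(3)\Rightarrow(1)\Rightarrow(2)$, so all three conditions are equivalent. In particular $(2)\Leftrightarrow(3)$ is obtained by passing through $(1)$, and no new argument is needed.

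There is no real obstacle here; the only point of care is that both cited results carry the hypothesis ``commutative domain,'' which is exactly the hypothesis of the corollary.

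One consistency check is worth flagging. Condition (3) is phrased in terms of \emph{nonzero} ideals being $0$-tilting. The zero ideal is pure projective but not a generator, so the ``every ideal'' definition of pure-tilting hereditary must also be read as ``every nonzero ideal.'' This reading is implicit in the proofs of Lemma~\ref{3.17} and Proposition~\ref{3.18}, and we adopt the same convention throughout so that the three conditions are compared consistently.

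If one wanted a direct argument for $(2)\Rightarrow(3)$ instead of routing through $(1)$, it would go as follows. In a hereditary ring every ideal is projective. Over a commutative domain this makes every nonzero ideal invertible, hence finitely generated projective with trace ideal $R$. That is exactly a progenerator, which is the same as being $0$-tilting.
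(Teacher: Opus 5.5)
Your proposal is correct and is exactly the paper's argument: the corollary is obtained by combining Lemma~\ref{3.17}, which gives (1)$\Leftrightarrow$(3), with Proposition~\ref{3.18}, which gives (1)$\Leftrightarrow$(2). Your reading of ``every ideal'' as ``every nonzero ideal'' is a correct clarification that the paper leaves implicit. However, your optional ``direct'' argument for (2)$\Rightarrow$(3) assumes from the outset that $R$ is hereditary, which is precisely the nontrivial content of Proposition~\ref{3.18}, so it does not actually avoid passing through (1).
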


\title{Grothendieck Category}

\subsection{Relation with Grothendieck Category}

A central question is understanding how the structural property of pure-tilting hereditary rings affect the torsion pair generated by the tilting module and, consequently, the nature of the associated Happel–Reiten–Smalø heart $\mathcal H_I,$ which arises as the heart of the $t$-structure determined by the torsion pair in the derived category $D(R).$

The following theorem shows that for a pure $1$-tilting hereditary ring, the tilting-induced torsion-free class is definable, the induced torsion pair is controlled by finitely presented objects (that is, it is of finite type), and these properties guarantee that the corresponding heart inherits the well-behaved structure of a Grothendieck category.

\begin{theorem} \label{GROTH}
Let $R$ be a pure 1-tilting hereditary ring. Suppose $(\mathcal T,\mathcal F)=(\operatorname{Gen}(I),I^{\perp})$ is the induced torsion pair in the Grothendieck category $R\text{-}\Mod$. Then:
\begin{enumerate}
    \item the class  $I^{\perp} = \{M\in R\text{-Mod}\mid \operatorname{Ext}_R^1(I,M)=0\}$
    is a definable subcategory of $R\text{-}\Mod$;
    \item the torsion pair $(\mathcal T,\mathcal F)$ is of finite type;
    \item the associated Happel--Reiten--Smal\o{} heart $\mathcal H_I$ is a Grothendieck category.
\end{enumerate}
\end{theorem}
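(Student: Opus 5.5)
Throughout, $I$ is a pure projective $1$-tilting ideal of $R$, which exists by hypothesis. The aim is to reduce everything to the case where $I$ is a \emph{classical} (finitely presented) $1$-tilting module; after that, the three statements are standard consequences of tilting theory.

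\textbf{Step 1: reduction to the classical case.} By Remark~\ref{REM:3.8}, $R$ is coherent. In the commutative setting, Theorem~\ref{2.2} shows that $R$ is hereditary noetherian. Lemma~\ref{LMA:3.1} then says every pure projective module is a direct sum of finitely generated modules, and these are finitely presented by Proposition~\ref{2.5}. So \cite[Corollary~2.8]{BH} applies: $I$ is classical. Concretely, $I$ is equivalent to a finitely presented $1$-tilting module $T$ with $\Add(T)=\Add(I)$. Hence $\Gen(I)=\Gen(T)$ and $I^{\perp}=T^{\perp}$.

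This step is where I expect the main difficulty. For a noncommutative ring we only know that $R$ is coherent and pure hereditary. We must still guarantee the decomposition of pure projectives needed for \cite[Corollary~2.8]{BH}. If that fails, I would fall back on \cite[Theorem~3.7]{BH}, as in the proof of Theorem~\ref{2.2}, to get that $I$ is projective and finitely generated. Then $I^{\perp}=R\text{-}\Mod$ trivially.

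\textbf{Step 2: proof of (1).} Since $T$ is finitely presented with $\pd T\le1$, take a presentation $0\to P_1\to P_0\to T\to0$ with $P_0,P_1$ finitely generated projective. This is possible because $R$ is coherent, so the syzygy is finitely presented, and it is projective because $\pd T\le 1$. Then $\Ext^1_R(T,-)$ is the cokernel of $\Hom(P_0,-)\to\Hom(P_1,-)$. Both functors commute with direct limits and direct products, since $P_0,P_1$ are finitely presented. So $\Ext^1_R(T,-)$ commutes with direct limits and products, and $T^{\perp}$ is closed under both.

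For closure under pure submodules, let $N\subseteq M$ be pure with $M\in T^{\perp}$. The sequence $0\to N\to M\to M/N\to0$ stays exact under $\Hom(T,-)$ because $T$ is finitely presented. The long exact sequence then gives an injection $\Ext^1(T,N)\hookrightarrow\Ext^1(T,M)=0$. Hence $I^{\perp}=T^{\perp}$ is definable.

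\textbf{Step 3: proof of (2).} By tilting theory, $\Gen(T)=T^{\perp}$ for a $1$-tilting module. The paper's notation $(\Gen(I),I^{\perp})$ should be read via this identification, the torsion-free class being $\Ker\Hom(T,-)$. Since $T$ is finitely presented, $\Hom(T,-)$ commutes with direct limits. So both $\Gen(T)$ and $\Ker\Hom(T,-)$ are closed under direct limits. Closure of the torsion-free class under direct limits is precisely the definition of finite type.

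\textbf{Step 4: proof of (3).} I would invoke the known criterion (Parra--Saorín; Colpi--Gregorio--Mantese for the tilting case): for a torsion pair in a Grothendieck category, the Happel--Reiten--Smal\o{} heart is Grothendieck if and only if the torsion-free class is closed under direct limits. By (2) this holds, so $\mathcal H_I$ is a Grothendieck category. Alternatively, one can use that the heart of a tilting $t$-structure induced by a classical (hence finite type) tilting module is equivalent to a module category twisted by the derived equivalence $\mathcal D(R)\simeq\mathcal D(\End T)$. This gives the conclusion directly.
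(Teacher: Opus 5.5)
\textbf{The gap is Step~1, and Steps~2--4 all depend on it.} For a general noncommutative $R$ you have no argument that $I$ is classical. Theorem~\ref{2.2} is a commutative statement. The paper establishes hereditary noetherianity, which is what \cite[Corollary~6.5]{PP:2005} needs in order to feed \cite[Corollary~2.8]{BH}, only in the commutative and von Neumann regular cases. Your fallback \cite[Theorem~3.7]{BH} is itself a theorem about commutative rings, so it is not available either. Whether a pure projective $1$-tilting module must be classical is exactly Saor\'{\i}n's question, which \cite{BH} settle only under such extra hypotheses, so this cannot be filled in routinely. Your later steps use finite presentation of $T$ essentially: Step~2 uses a finitely generated projective presentation of $T$, Step~3 uses that $\Hom_R(T,-)$ commutes with direct limits, and the alternative in Step~4 goes through $\End(T)$ for classical $T$. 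So as written, the proposal proves the theorem only in the commutative case. There it is correct but degenerate: $I$ is projective, hence a projective generator, so $\mathcal F=0$.

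\textbf{The missing idea is that no reduction is needed.} Pure projectivity of $I$ does all the work, and the paper's proof also argues directly from it rather than through classicality.

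For (2), let $M=\varinjlim M_j$ with $\Hom_R(I,M_j)=0$ for all $j$. The canonical map $\bigoplus_j M_j\to M$ is a pure epimorphism. Hence any $f\colon I\to M$ lifts to $I\to\bigoplus_j M_j\subseteq\prod_j M_j$. Since $\Hom_R(I,\prod_j M_j)\cong\prod_j\Hom_R(I,M_j)=0$, we get $f=0$. So $\mathcal F=\ker\Hom_R(I,-)$ is closed under direct limits.

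For (1), $I^{\perp}=\Gen(I)$ is closed under direct sums and epimorphic images, hence under direct limits, and it is closed under products for any $I$. Your pure-submodule argument goes through verbatim with ``finitely presented'' replaced by ``pure projective'', since pure projectivity is exactly what makes $\Hom_R(I,M)\to\Hom_R(I,M/N)$ surjective. Alternatively, every tilting class is definable by Bazzoni--Herbera and Bazzoni--\v{S}\v{t}ov\'{\i}\v{c}ek.

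Then (3) follows from the Parra--Saor\'{\i}n criterion exactly as in your Step~4.

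One point in your favour: the paper's proof takes $\mathcal F=I^{\perp}$ and derives finite type from definability of $I^{\perp}$. That cannot be literally right, since $\Gen(I)=I^{\perp}$ for $1$-tilting $I$. Your reading $\mathcal F=\ker\Hom_R(I,-)$ is the correct one, and for that class it is the lifting argument above, not definability of $I^{\perp}$, that yields finite type.
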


\begin{proof}
Since $I$ is a $1$-tilting module, the pair $(\mathcal T,\mathcal F) = (\operatorname{Gen}(I),I^{\perp})$ is a torsion pair in $R\text{-Mod}$. Moreover, every tilting class is closed under extensions, products, direct sums, and epimorphic images. Because $I$ is assumed to be pure projective, it follows from standard results in tilting theory that the tilting class $I^{\perp} = \ker \operatorname{Ext}_R^1(I,-)$ is definable. Hence $I^{\perp}$ is closed under direct products, direct limits, and pure submodules. Since definable classes are closed under direct limits, the torsion-free class $\mathcal F=I^{\perp}$ is closed under direct limits. Therefore the torsion pair $(\mathcal T,\mathcal F)$ is of finite type. Now consider the Happel--Reiten--Smal\o{} heart $\mathcal H_I$ associated with the torsion pair $(\mathcal T,\mathcal F)$. By the theorem of Colpi, Gregorio, and Mantese on hearts of torsion pairs induced by tilting modules, the heart $\mathcal H_I$ is a Grothendieck category whenever the torsion-free class $\mathcal F$ is closed under direct limits. Since $\mathcal F=I^{\perp}$ is definable, it is closed under direct limits, and consequently $\mathcal H_I$ is a Grothendieck category.
\end{proof}

\begin{corollary}
Let $R$ be a pure $1$-tilting hereditary ring. Then the induced torsion pair $({\rm Gen}(I),I^{\perp})$ is a tilting torsion pair of finite type in the category $R\text{-Mod}$.
\end{corollary}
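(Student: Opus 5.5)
The statement is a corollary of Theorem~\ref{GROTH}, so I would read it off that theorem and check that the pair is genuinely a tilting torsion pair. Here $I$ is an ideal of the pure $1$-tilting hereditary ring $R$, so by definition $I$ is a pure projective $1$-tilting module.

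\textbf{Step 1: identify the tilting class.} For a $1$-tilting module $T$ one has $\operatorname{Gen}(T)=T^{\perp_1}$. This uses the standard characterization of $1$-tilting modules (Colpi--Trlifaj): $T$ is $1$-tilting if and only if $\operatorname{Gen}(T)=T^{\perp_1}$. It follows that $\operatorname{Gen}(I)$ is a torsion class, since it is closed under quotients, direct sums and extensions. It is also the tilting class attached to $I$.

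\textbf{Step 2: pin down the torsion-free class.} The torsion-free class paired with $\operatorname{Gen}(I)$ is
\[
\{M \mid \Hom_R(I,M)=0\}.
\]
I would take the paper's notation $I^{\perp}$ to denote this pair. The pair $(\operatorname{Gen}(I),\mathcal F)$ is then the torsion pair induced by the tilting module $I$, which is what ``tilting torsion pair'' means.

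\textbf{Step 3: finite type.} Finite type means that $\mathcal F$ is closed under direct limits. This is item (2) of Theorem~\ref{GROTH}, which rests on item (1), the definability of the relevant class coming from pure projectivity of $I$. An alternative route is to use that every $1$-tilting module is of finite type (Bazzoni--Herbera), and that the torsion-free class $\{M\mid \Hom_R(I,M)=0\}$ is closed under direct limits once the tilting class is definable. This follows from the Hoshino--Kato--Miyachi/Colpi--Gregorio--Mantese criterion for $1$-tilting torsion pairs.

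\textbf{Main obstacle.} The only delicate point is Step 2. The symbol $I^{\perp}$ appears in Theorem~\ref{GROTH} written as $\Ext^1$-orthogonal, but for the pair to be a torsion pair the torsion-free class has to be $\Hom$-orthogonal. I would resolve this by working throughout with the $\Hom$-orthogonal class. I would then invoke the known fact that for a $1$-tilting module this class is closed under direct limits, which is exactly the finite type property recorded in Theorem~\ref{GROTH}(2). With that in place, the corollary follows immediately.
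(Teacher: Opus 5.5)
\textbf{The gap is in Step 3, the only substantive step.} Your Steps 1 and 2 are fine. You are right that $\{M\mid \Ext^1_R(I,M)=0\}$ is the tilting class $\operatorname{Gen}(I)$ itself, so the torsion-free class must be $\mathcal F=\{M\mid \Hom_R(I,M)=0\}$.

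Once you switch to this $\Hom$-orthogonal class, you can no longer cite Theorem~\ref{GROTH}(2). Its argument only shows that $\ker\Ext^1_R(I,-)$ is closed under direct limits, so it is a statement about the torsion class, not about $\mathcal F$. The paper's proof of that item conflates the two classes in exactly the way you noticed.

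Your alternative route, and the ``known fact'' in your last paragraph, also fail. Definability of the tilting class holds for every $1$-tilting module, with or without pure projectivity. That is what ``finite type'' means in Bazzoni--Herbera, a different notion from a torsion pair of finite type. It does not imply that $\ker\Hom_R(T,-)$ is closed under direct limits, and no criterion can give this, because it is false in general. Over $\mathbb Z$, take $T=\mathbb Q\oplus\mathbb Q/\mathbb Z$:
\begin{itemize}
\item $T$ is $1$-tilting, and $\operatorname{Gen}(T)$ is the definable class of divisible groups;
\item $\mathcal F$ is the class of reduced groups;
\item $\mathbb Q=\varinjlim(\mathbb Z\xrightarrow{2}\mathbb Z\xrightarrow{3}\mathbb Z\xrightarrow{4}\cdots)$ is a direct limit of reduced groups, so $\mathcal F$ is not closed under direct limits.
\end{itemize}

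\textbf{The missing idea is to apply pure projectivity of $I$ directly to $\mathcal F$.} Let $(M_j)$ be a direct system in $\mathcal F$, and let $\pi\colon\bigoplus_j M_j\to\varinjlim_j M_j$ be the canonical map, which is a pure epimorphism. Since $I$ is pure projective, every $f\colon I\to\varinjlim_j M_j$ factors as $f=\pi g$ for some $g\colon I\to\bigoplus_j M_j$. Now $\mathcal F$ is closed under products and submodules, and $\bigoplus_j M_j\subseteq\prod_j M_j$, so $\bigoplus_j M_j\in\mathcal F$. Hence $g=0$, and therefore $f=0$. So $\mathcal F$ is closed under direct limits, which is the finite type condition. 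The $\mathbb Z$ example, where $\mathbb Q$ is not pure projective, shows this hypothesis cannot be dropped.
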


\begin{corollary}
Let $R$ be pure $1$-tilting hereditary ring. Then for every $1$-tilting ideal $I$, the heart $\mathcal H_t$ of the $t$-structure in $\mathcal D(\Mod\text{-}R)$ induced by the torsion pair $(\Gen(I),\mathcal F)$ is a Grothendieck category.
\end{corollary}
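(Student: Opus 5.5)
The plan is to obtain the corollary from Theorem~\ref{GROTH}, once we check that the heart $\mathcal H_t$ in the statement is the Happel--Reiten--Smal\o{} heart $\mathcal H_I$ considered there. Recall the construction. For a torsion pair $(\mathcal T,\mathcal F)$ in the Grothendieck category $R\text{-}\Mod$, the associated $t$-structure on the derived category has heart
\[
\mathcal H_t=\{X\in\mathcal D(R)\mid H^0(X)\in\mathcal T,\ H^{-1}(X)\in\mathcal F,\ H^i(X)=0 \text{ for } i\neq 0,-1\}.
\]
This is, by definition, the HRS heart $\mathcal H_I$ when $\mathcal T=\operatorname{Gen}(I)$. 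So the first step is purely a matter of unwinding definitions. It identifies $\mathcal H_t=\mathcal H_I$ for each $1$-tilting ideal $I$, which by hypothesis is pure projective.

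The second step is to invoke the criterion of Parra and Saor\'in, which sharpens the Colpi--Gregorio--Mantese result quoted in the proof of Theorem~\ref{GROTH}. It says that the HRS heart of a torsion pair in a Grothendieck category is Grothendieck if and only if the torsion-free class $\mathcal F$ is closed under direct limits, i.e.\ the pair is of finite type. By Theorem~\ref{GROTH}(2), and equivalently by the preceding corollary, the pair $(\operatorname{Gen}(I),\mathcal F)$ is a tilting torsion pair of finite type. Hence $\mathcal H_t$ is a Grothendieck category, and since $I$ was an arbitrary $1$-tilting ideal, the corollary follows.

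The main obstacle is to make sure that the finite type condition really concerns the torsion-free class. For a $1$-tilting module the torsion class is $\operatorname{Gen}(I)=I^{\perp}$, and the torsion-free class is $\mathcal F=\ker\Hom_R(I,-)$. Definability of $I^{\perp}$ is automatic for tilting classes, but closure of $\ker\Hom_R(I,-)$ under direct limits needs an additional argument. I would argue as follows.
\begin{enumerate}
    \item By \cite[Corollary~2.8]{BH}, together with the discussion after Lemma~\ref{LMA:3.1}, the pure projective $1$-tilting module $I$ is classical.
    \item Being classical means there is a finitely presented $1$-tilting module $T'$ with $\Add(T')=\Add(I)$.
    \item Consequently $\ker\Hom_R(I,-)=\ker\Hom_R(T',-)$.
    \item Since $T'$ is finitely presented, $\Hom_R(T',-)$ commutes with direct limits, so $\ker\Hom_R(T',-)$ is closed under direct limits.
\end{enumerate}
If the classicality step were unavailable in some case, I would fall back on the standard fact that $1$-tilting torsion pairs in module categories always have a Grothendieck heart, which is due to Colpi--Gregorio--Mantese and Bazzoni. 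Either route yields the same conclusion.
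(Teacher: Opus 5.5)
Your reduction is right: identify $\mathcal H_t$ with the HRS heart, then apply the Parra--Saor\'in criterion. You are also right that the finite-type condition concerns $\mathcal F=\ker\Hom_R(I,-)$ and not $I^{\perp}=\Gen(I)$, which is the torsion class. The paper reads this corollary off Theorem~\ref{GROTH}, whose proof checks direct limits on $I^{\perp}$ and so misses exactly this point. Citing Theorem~\ref{GROTH}(2) therefore gives you nothing, and everything rests on your own verification.

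That verification has a gap at your first step. \cite[Corollary~2.8]{BH} requires every pure-projective $R$-module to be a direct sum of finitely presented modules. Your only source for this is the remark after Lemma~\ref{LMA:3.1}, which simply asserts that $R$ is hereditary noetherian. The paper never proves this for an arbitrary, possibly non-commutative, pure $1$-tilting hereditary ring; Theorem~\ref{2.2} treats only commutative rings. Whether pure-projective $1$-tilting modules are classical is precisely what \cite{BH} settles only under extra hypotheses on $R$, so you cannot assume it.

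Your fallback is false. Over $\mathbb Z$, the module $\mathbb Q\oplus\mathbb Q/\mathbb Z$ is $1$-tilting, and its torsion-free class is the class of reduced groups. That class is not closed under direct limits, since $\mathbb Q=\varinjlim(\mathbb Z\xrightarrow{2}\mathbb Z\xrightarrow{3}\mathbb Z\to\cdots)$. So by Parra--Saor\'in its heart is not Grothendieck. The Colpi--Gregorio--Mantese result you have in mind concerns cotilting torsion pairs.

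You do not need classicality at all. Let $M=\varinjlim M_j$ with every $M_j\in\mathcal F$.
\begin{enumerate}
\item The canonical map $p\colon\bigoplus_j M_j\to M$ is a pure epimorphism, because every map from a finitely presented module into $M$ factors through some $M_j$.
\item Since $I$ is pure projective, any $f\colon I\to M$ lifts to some $\tilde f\colon I\to\bigoplus_j M_j$ with $p\tilde f=f$.
\item Each component $\pi_j\tilde f\colon I\to M_j$ vanishes because $M_j\in\ker\Hom_R(I,-)$. Hence $\tilde f=0$, and so $f=0$.
\end{enumerate}
Thus $\mathcal F$ is closed under direct limits. The Parra--Saor\'in criterion then finishes the proof, using only the pure projectivity of $I$, which is exactly what the hypothesis provides.
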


\begin{remark}
 Every ideal induces a definable torsion pair of finite type, and each corresponding HRS heart is a Grothendieck category. Consequently, the module category $R\text{-Mod}$ admits a highly controlled purity structure over a pure $1$-tilting hereditary ring $R$.
\end{remark}
 
\section{Ext-right orthogonal class of pure projective tilting Modules}

In this section, we prove the results by using homological properties. We begin with the following
 \begin{definition}
 Let $\mathscr{T}$ be the class of all pure projective tilting modules. A left $R$-module $M$ is called a $\mathscr{T}$-injective if $\Ext_R^1(T, M) = 0$ for all $T \in \mathscr{T}.$ We denote $\mathscr{T}^{\bot}$ by the class of all $\mathscr{T}$-injective modules.
 \end{definition}

 \begin{remark}
     Every injective $R$-module is $\mathscr{T}$-injective, but the converse does not hold. Furthermore, if $\mathscr{T}$ denotes a classical tilting class, then every $\fp$-injective $R$-module is also $\mathscr{T}$-injective, although the converse implication is not valid, in general. An $R$-module $G$ is $\mathscr{T}$-injective if and only if $G$ has injective property for each exact sequence $0 \rightarrow K \rightarrow M \rightarrow T \rightarrow 0$ of $R$-modules, where $T$ is pure projective tilting.
 \end{remark}

\begin{example}
Let \(R\) be an Artin algebra and let $\mathscr{T}$ be the class of all pure projective tilting $R$-modules. Since every finitely generated \(R\)-module over an Artin algebra is pure projective, every finitely generated tilting module belongs to \(\mathscr{T}\). For a tilting module \(T\in\mathscr{T}\), the associated tilting class is
$T^{\perp}
=
\{\, M\in \mathrm{Mod}\text{-}R
\mid
\operatorname{Ext}^1_R(T,M)=0
\,\}.
$
By the Brenner--Butler tilting theorem, $T^{\perp}=\operatorname{Gen}(T).$ Hence, every module generated by \(T\) is {\(T\)}-injective. In particular, any module belonging to
$\bigcap_{T\in\mathscr{T}} T^{\perp}$ is \(\mathscr{T}\)-injective.
\end{example}

We now illustrate the above concepts through an example arising from the representation theory of quivers and path algebras.
\begin{example}
Let \(R=kQ\) be the path algebra of the quiver $Q:\quad 1 \longrightarrow 2,$ where \(k\) is a field, and let 
$\mathscr{T}
=
\{\text{all pure projective tilting }R\text{-modules}\}.
$
Consider the tilting module $T=P(1)\oplus S(1),$ where \(P(1)\) denotes the indecomposable projective module corresponding to the vertex \(1\), and \(S(1)\) is the simple module at vertex \(1\). Since \(R\) is a finite-dimensional hereditary algebra, every finitely generated module is pure projective; hence \(T\in\mathscr{T}\). By tilting theory, $T^{\perp}=\operatorname{Gen}(T).$ Therefore, every module generated by \(T\) is \(T\)-injective. In particular, any module belonging to $\bigcap_{T\in\mathscr{T}} T^{\perp}$ is \(\mathscr{T}\)-injective.
\end{example}

 \begin{example}
 Let $(R, \mathfrak{m})$ be a local commutative ring. By \cite[Theorem ~3.7]{BH}, every pure projective $1$-tilting $R$-module is projective. Then the residue field $k = R/\mathfrak{m}$ is $\mathscr{T}$-injective since \cite[Proposition ~3.4]{BH}. Consider an exact sequence $0 \rightarrow k \rightarrow E(k) \rightarrow E(k)/k \rightarrow 0,$ where $E(k)$ is an injective envelope of $k.$ Then $k \oplus E(k)$ is $\mathscr{T}$-injective. 
 \end{example}
 
 \begin{proposition}\label{PRO:5.1}
 \begin{enumerate}
     \item Every injective $R$-module is $\mathcal{P}$-injective where $\mathcal{P}$ is the class of all pure projective $R$-modules. Converse part is hold over a noetherian ring.
     
     \item The class of all $\mathscr{T}$-injective modules coincide with the class of all injective modules over a noetherian ring and which the class of all pure projective tilting modules coincide with the class of all finitely generated modules.
 \end{enumerate}
 
 \end{proposition}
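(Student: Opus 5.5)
The plan for part (1) begins with the easy direction. Let $E$ be injective. Then $\Ext_R^1(P,E)=0$ for every module $P$, and in particular for every $P\in\mathcal{P}$. So $E$ is $\mathcal{P}$-injective.

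For the converse over a noetherian ring, I will use Baer's criterion. Let $M$ be $\mathcal{P}$-injective and let $I$ be a left ideal. Since $R$ is noetherian, $I$ is finitely generated. Hence $R/I$ is finitely presented, and so $R/I\in\mathcal{P}$ by Lemma~\ref{2.1}. Thus $\Ext_R^1(R/I,M)=0$. Apply $\Hom_R(-,M)$ to $0\to I\to R\to R/I\to 0$. The resulting long exact sequence shows that $\Hom_R(R,M)\to\Hom_R(I,M)$ is surjective. So every map $I\to M$ extends to $R$, and Baer's criterion gives that $M$ is injective.

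For part (2), I read the hypothesis as: $R$ is noetherian, and the class $\mathscr{T}$ of pure projective tilting modules coincides with the class of finitely generated modules. One inclusion is immediate: injective modules lie in $\mathscr{T}^{\perp}$, as in the remark preceding the examples. For the other, let $M\in\mathscr{T}^{\perp}$ and $I$ a left ideal. Then $R/I$ is finitely generated, hence lies in $\mathscr{T}$ by hypothesis. Therefore $\Ext_R^1(R/I,M)=0$, and the same Baer argument as in (1) shows $M$ is injective. This argument only needs the cyclic modules $R/I$ to lie in $\mathscr{T}$, and that is exactly what the hypothesis supplies.

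The main obstacle is interpretive rather than technical. The hypothesis in (2) is strong: over a noetherian ring, a finitely generated module is finitely presented, hence pure projective by Proposition~\ref{2.5}. However, a finitely generated module need not be tilting, since $R/I$ typically fails to generate a resolution of $R$. So the argument must use the stated coincidence as given and not try to derive it. If one instead wanted to weaken the hypothesis, the key step to salvage would be showing that every cyclic $R/I$ lies in $\mathscr{T}$. The first fallback would be to replace $R/I$ by a tilting module $T$ having $R/I$ as a direct summand, for instance $T=R\oplus R/I$, after checking the tilting conditions. Without extra assumptions I expect this to fail, so I would keep the hypothesis as stated.
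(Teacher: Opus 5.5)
Your proof is correct and is essentially the paper's argument. In both parts one gets $\Ext_R^1(R/I,-)=0$ because each cyclic $R/I$ is finitely presented and hence in $\mathcal{P}$ (resp.\ lies in $\mathscr{T}$ by the stated coincidence), concludes by Baer's criterion, and the paper merely inserts a redundant detour through the pure exactness of $0\to G\to E(G)\to E(G)/G\to 0$. You are right to treat the coincidence in (2) as a hypothesis rather than derive it: the paper's appeal to Proposition~\ref{2.5} cannot yield it, and for $R\neq 0$ it never holds (the zero module is finitely generated but not tilting), so (2) is true only vacuously.
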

 \begin{proof}
  (1). Let $R$ be a noetherian ring. Then every cyclic module is pure projective. We show that every $\mathcal{P}$-injective module is injective. Let $G$ be an $\mathcal{P}$-injective module. Consider an exact sequence $0 \rightarrow G \rightarrow E(G) \rightarrow E(G)/G \rightarrow 0,$ where $E(G)$ is an injective envelope of $G.$ This sequence is pure exact since all finitely presented modules are pure projective and $G$ is $\mathcal{P}$-injective. It follows that we get an exact sequence $\Hom_R(R/I, E(G)) \rightarrow \Hom_R(R/I, E(G)/G) \rightarrow \Ext_R^1(R/I, G) \rightarrow 0$ when applying the functor $\Hom_R(R/I, -).$ Since $\Hom_R(R/I, E(G)) \rightarrow \Hom_R(R/I, E(G)/G)$ is surjective, $\Ext_R^1(R/I, G) = 0$. Hence $G$ is injective, as required.\\
  
  (2). By Proposition \ref{2.5}, the class of all pure projective tilting modules coincide with the class of all finitely presented modules. Hence we prove the remains with similar from the part $(1)$.
 \end{proof}

 Bazzoni et al. in \cite[Theorem ~3.7]{BH} proved that every pure projective $1$-tilting module is projective over a commutative ring. This result motivates us to prove the question of when all pure projective tilting modules are projective in the following: 
 
 \begin{proposition}
 Let $R$ be a ring. Then every cotorsion module is $\mathscr{T}$-injective if and only if all pure projective tilting modules are projective, and all $R$-modules are $\mathscr{T}$-injective.
\end{proposition}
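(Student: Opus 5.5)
The plan is to observe that the right-hand condition collapses to a single statement, and then to pass through flatness using the flat cotorsion pair. Indeed, if every pure projective tilting module is projective, then $\Ext_R^1(T,M)=0$ for every $T\in\mathscr{T}$ and every $R$-module $M$. So ``all $R$-modules are $\mathscr{T}$-injective'' follows automatically, and it suffices to show
\[
\text{every cotorsion module is } \mathscr{T}\text{-injective} \iff \text{every } T\in\mathscr{T} \text{ is projective}.
\]

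\emph{($\Leftarrow$).} If every $T\in\mathscr{T}$ is projective, then $\Ext_R^1(T,-)=0$ identically. In particular every module, and hence every cotorsion module, lies in $\mathscr{T}^{\perp}$. This direction needs no further work.

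\emph{($\Rightarrow$).} Assume every cotorsion module is $\mathscr{T}$-injective, and fix a pure projective tilting module $T$. By hypothesis, $\Ext_R^1(T,C)=0$ for every cotorsion module $C$, so $T\in{}^{\perp}\mathcal{C}$, where $\mathcal{C}$ is the class of cotorsion modules. I would now invoke the flat cotorsion pair $(\mathcal{F},\mathcal{C})$ of Bican--El Bashir--Enochs, which gives ${}^{\perp}\mathcal{C}=\mathcal{F}$, the class of flat modules; hence $T$ is flat.

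Next I would take a free presentation $0\to K\to F\to T\to 0$. Since $T$ is flat, this sequence is pure exact. Since $T$ is pure projective, Lemma~\ref{2.1}(2) shows that it splits, exactly as in the proof of Theorem~\ref{2.3}. Therefore $T$ is a direct summand of $F$, so it is projective. By the opening remark, every $R$-module is then $\mathscr{T}$-injective, which completes the equivalence.

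The main obstacle is the identification ${}^{\perp}\mathcal{C}=\mathcal{F}$. It is not a formal consequence of the definition of cotorsion modules as $\mathcal{F}^{\perp}$; it requires the fact that $(\mathcal{F},\mathcal{C})$ is a complete cotorsion pair, i.e.\ the flat cover conjecture. If I wanted to avoid quoting it in that form, I would argue directly. Take a special flat precover $0\to C\to F'\to T\to 0$ with $C$ cotorsion. Since $\Ext_R^1(T,C)=0$, this sequence splits, so $T$ is a direct summand of the flat module $F'$ and hence is flat.
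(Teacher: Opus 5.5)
Your proof is correct, but it is organized differently from the paper's.

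\textbf{The paper's route.} The paper never shows directly that $T$ is flat. It fixes an arbitrary module $M$ and takes a cotorsion envelope $0\to M\to C(M)\to C(M)/M\to 0$ with $C(M)/M$ flat. It notes that this sequence is pure exact, so pure projectivity of $T$ makes $\Hom_R(T,C(M))\to\Hom_R(T,C(M)/M)$ surjective. Hence $\Ext_R^1(T,M)$ embeds in $\Ext_R^1(T,C(M))=0$. Thus every module is $\mathscr{T}$-injective, and projectivity of $T$ follows from that.

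\textbf{Your route.} You first prove $T$ flat. You then reuse the splitting argument from the proof of Theorem~\ref{2.3}: pure projective and flat implies projective. This isolates the actual mechanism more cleanly than the paper does.

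\textbf{What each route needs.} The paper's argument genuinely depends on the existence of cotorsion envelopes with flat cokernel, that is, on the Bican--El Bashir--Enochs theorem. Your argument does not, contrary to your own assessment of the ``main obstacle.''
\begin{itemize}
\item The equality ${}^{\perp}\mathcal{C}=\mathcal{F}$ is elementary and needs no completeness.
\item For every right module $N$, the character module $N^{+}$ is pure-injective, hence cotorsion.
\item Moreover $\Ext_R^1(T,N^{+})\cong \Tor_1^R(N,T)^{+}$.
\item So vanishing of $\Ext_R^1(T,-)$ on cotorsion modules already forces $\Tor_1^R(-,T)=0$, i.e.\ $T$ is flat.
\end{itemize}

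So your main argument works without the flat cover theorem. It even shows that the hypothesis only needs to be tested on pure-injective modules. Your fallback via special flat precovers is valid, but it reintroduces the deep result unnecessarily.
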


\begin{proof}
 First we assume that every cotorsion module is $\mathscr{T}$-injective. Let $M$ be an arbitrary $R$-module. Then we consider the exact sequence $0 \rightarrow M \rightarrow C(M) \stackrel{g}{\rightarrow} C(M)/M \rightarrow 0,$ where $C(M)$ is a cotorsion envelope of $M$ and $C(M)/M$ is a flat $R$-module. Then the sequence is pure exact. For any pure projective tilting $R$-module $T$ and applying the functor $\Hom_R(T, -),$ we get the exact sequence \[\Hom_R(T, C(M)) \stackrel{g^\star}{\rightarrow} \Hom_R(T, C(M)/M) \rightarrow \Ext_R^1(T, M) \rightarrow \Ext_R^1(T, C(M)).\] By hypothesis,  $\Ext_R(T, C(M)) = 0.$ It follows that $\Ext_R^1(T, M) = 0$ since $g^\star$ is surjective. This implies that $T$ is projective and all $R$-modules are $\mathscr{T}$-injective. Converse part is clear.
 \end{proof}
 
 \begin{proposition}
 Let $R$ be a Noetherian ring. Then the  following conditions are equivalent:
 \begin{enumerate}
     \item [(1)] $R$ is a  pure-tilting hereditary ring and every cotorsion module is $\mathscr{T}$-injective;
     \item [(2)] Every pure projective tilting module is injective.
 \end{enumerate}
 \end{proposition}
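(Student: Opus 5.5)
I would prove the two implications separately, relying on the immediately preceding proposition (every cotorsion module is $\mathscr{T}$-injective if and only if all pure projective tilting modules are projective and every module is $\mathscr{T}$-injective) and on the Noetherian hypothesis. For a Noetherian ring, Proposition~\ref{2.4.0} gives that $R$ is pure hereditary and hence coherent. By Proposition~\ref{2.5}, a finitely generated module is pure projective exactly when it is finitely presented, and over a Noetherian ring this holds for every finitely generated module.

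\textbf{Direction (1)$\Rightarrow$(2).} Assume $R$ is pure-tilting hereditary and every cotorsion module is $\mathscr{T}$-injective. By the preceding proposition, every pure projective tilting module $T$ is projective and every $R$-module is $\mathscr{T}$-injective. Thus $\Ext_R^1(T,M)=0$ for all modules $M$.

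I then want to show that $T$ is injective. Since $R$ is pure-tilting hereditary, every ideal $I$ is pure projective tilting, and hence projective. To show $\Ext^1_R(R/I,T)=0$ for all ideals $I$, I would try to show that $R/I$ is itself pure projective tilting, or at least lies in $\Add$ of such modules. That would give $\Ext^1_R(R/I,T)=0$, since every module, in particular $T$, is $\mathscr{T}$-injective, and Baer's criterion would then finish the argument.

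A cleaner route is the following. Because every module is $\mathscr{T}$-injective and $R/I$ is finitely presented, Proposition~\ref{PRO:5.1}(2) says that $\mathscr{T}^\perp$ coincides with the class of injective modules over a Noetherian ring, at least when pure projective tilting modules coincide with finitely generated modules. So every module, in particular $T$, is injective.

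\textbf{Direction (2)$\Rightarrow$(1).} Assume every pure projective tilting module is injective. Since $R$ is a tilting module, $R$ is self-injective, and Noetherian, so $R$ is quasi-Frobenius.

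\begin{itemize}
\item To show every ideal is pure projective tilting, I note that ideals are finitely generated, hence finitely presented, hence pure projective (Proposition~\ref{2.5}). It remains to show they are tilting. Over a QF ring the tilting modules have projective dimension $0$, so they are the projective generators. I would therefore need every ideal to be a projective generator, and this is doubtful in general.
\item For the cotorsion condition, take $T$ pure projective tilting and $C$ cotorsion. Since $T$ is projective in this setting, $\Ext^1_R(T,C)=0$ follows directly.
\end{itemize}

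\textbf{Main obstacle.} The hard step is the tilting property of arbitrary ideals in (2)$\Rightarrow$(1), together with identifying $\mathscr{T}^\perp$ with the injectives in (1)$\Rightarrow$(2). For the first, I would try to use the structure of QF rings (a finite product of indecomposable QF rings) and argue summand by summand. I expect that an extra hypothesis, such as semisimplicity, may actually be needed. In that case the argument reduces to the semisimple case via Theorem~\ref{2.3} and Corollary~\ref{2.0.3}.
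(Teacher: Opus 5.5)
Your proposal does not prove the statement. The two steps you flag as obstacles cannot be completed, because the proposition is false in both directions. This reads ``ideal'' as ``nonzero ideal'', as the paper does in Lemma~\ref{3.17}. Read literally, the zero ideal is never tilting, so (1) fails for every nonzero ring and (2)$\Rightarrow$(1) already fails for a field.

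\textbf{Direction (1)$\Rightarrow$(2).} Your first step is fine. Condition (1) forces every pure projective tilting module to be projective, since a flat pure projective module is projective. Hence every module is $\mathscr{T}$-injective. The step to injectivity is where the argument breaks.
\begin{itemize}
\item Your first idea needs $R/I\in\mathscr{T}$.
\item Your ``cleaner route'' applies Proposition~\ref{PRO:5.1}(2) under the hypothesis that pure projective tilting modules are exactly the finitely generated ones, which you do not have.
\item Its conclusion, that every module is injective, would make $R$ semisimple, which (1) does not force.
\end{itemize}
Concretely, $R=\mathbb{Z}$ satisfies (1):
\begin{itemize}
\item Each nonzero ideal $n\mathbb{Z}\cong\mathbb{Z}$ is a finitely generated progenerator, hence pure projective $0$-tilting.
\item A pure projective abelian group is a direct sum of cyclic groups. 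A summand $\mathbb{Z}/n$ with $n\geq 2$ is ruled out by $\Ext^1_{\mathbb{Z}}(\mathbb{Z}/n,\mathbb{Z}/n)\cong\mathbb{Z}/n\neq 0$. So every pure projective tilting group is free, and every module, cotorsion or not, is $\mathscr{T}$-injective.
\end{itemize}
But $\mathbb{Z}$ itself lies in $\mathscr{T}$ and is not injective. In general, (2) forces $R$ to be self-injective because $R\in\mathscr{T}$, while (1) imposes nothing of the kind.

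\textbf{Direction (2)$\Rightarrow$(1).} Your reasoning is correct as far as it goes, and your doubt is justified. As you show, (2) makes $R$ quasi-Frobenius. Conversely, over a QF ring every tilting module has finite projective dimension, hence is projective, hence is injective. So for Noetherian $R$, (2) is equivalent to $R$ being QF. QF rings need not be pure-tilting hereditary: in $k[x]/(x^2)$ the ideal $(x)\cong k$ has infinite projective dimension, so it is not tilting, although (2) holds. Your proposed extra hypothesis of semisimplicity does not rescue this direction either. In $k\times k$ the ideal $k\times 0$ is not a generator, so $R$ cannot embed in any module of $\Add(k\times 0)$, and the ideal is not tilting.

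\textbf{The paper's proof.} It does not get around either problem. It declares (1)$\Rightarrow$(2) clear. For (2)$\Rightarrow$(1), it splits $0\to I\to R\to R/I\to 0$ because ``$I$ is injective'' and concludes that $R$ is von Neumann regular. But (2) yields injectivity of $I$ only once $I$ is known to be pure projective tilting, which is part of what (1) asserts. The conclusion also fails for $k[x]/(x^2)$.

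\textbf{What can be salvaged.} What your arguments actually establish is that, for Noetherian $R$, (2) holds if and only if $R$ is QF. They also show that the cotorsion part of (1) is equivalent to every pure projective tilting module being projective.
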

 
 \begin{proof}
  $(1) \Rightarrow (2)$ is clear.\\
  
  $(2) \Rightarrow (1).$ Let $I$ be a finitely generated ideal of $R.$ Hence the exact sequence $0 \rightarrow I \rightarrow R \rightarrow R/I \rightarrow 0$ splits because $I$ is injective. It follows that every finitely generated ideal is a direct summand of $R.$ Then $R$ is a von Neumann regular ring.
 \end{proof}
 

\begin{proposition} \label{PROP:4.4}
Let $R$ be a  pure-tilting hereditary ring. Then every $\mathscr{T}$-injective $R$-module has injective dimension at most $1.$
\end{proposition}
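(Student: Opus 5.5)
Let $M$ be a $\mathscr{T}$-injective module. The plan is to embed $M$ in its injective envelope and show that the cokernel is already injective. Consider
\[0 \rightarrow M \rightarrow E(M) \rightarrow E(M)/M \rightarrow 0 .\]
It then suffices to prove that $E(M)/M$ is injective. By Baer's criterion, this amounts to showing that $\Ext_R^1(R/I, E(M)/M)=0$ for every (left) ideal $I$ of $R$.

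Fix an ideal $I$ and apply $\Hom_R(-,E(M)/M)$ to $0\rightarrow I\rightarrow R\rightarrow R/I\rightarrow 0$. This gives
\[\Ext_R^1(R/I,E(M)/M)\cong \Ext_R^2(R/I,E(M)/M)\text{-shifted}\ \cong \Ext_R^1(I,E(M)/M)\]
after dimension shifting, using that $R$ is projective. Next apply $\Hom_R(I,-)$ to the envelope sequence. This gives
\[\Ext_R^1(I,E(M)/M)\cong \Ext_R^2(I,M),\]
because $E(M)$ is injective. So the whole problem reduces to the vanishing of $\Ext_R^2(I,M)$ for every ideal $I$.

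Here the hypothesis enters. Since $R$ is pure-tilting hereditary, each ideal $I$ lies in $\mathscr{T}$. Hence $\Ext_R^1(I,M)=0$ by $\mathscr{T}$-injectivity of $M$. To get $\Ext^2$, I would use the tilting axiom $\pd_R(I)\le n$ together with the syzygy. Take a projective presentation $0\rightarrow K\rightarrow P\rightarrow I\rightarrow 0$, so that $\Ext_R^2(I,M)\cong \Ext_R^1(K,M)$.

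The key point is then to show that $K$ is again pure projective tilting, or at least in $\mathscr{T}$. One route is to note that $R$ is coherent by Remark~\ref{REM:3.8}. Then ideals that are pure projective are summands of sums of finitely presented modules, and their syzygies stay pure projective. The other option, which I would try first, is cleaner: show that every ideal is in fact projective, so $\pd_R(I)=0$ and $\Ext_R^2(I,M)=0$ trivially. This would follow if the tilting ideals force hereditary behaviour. In the commutative case this is Theorem~\ref{2.2}, and in the von Neumann regular case it is Theorem~\ref{2.3}.

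The main obstacle is precisely this step: controlling $\Ext^2_R(I,M)$ for a general (noncommutative) pure-tilting hereditary ring. An ideal that is a $1$-tilting module need not be projective, and its syzygy need not belong to $\mathscr{T}$. I expect to need either the structure result (every pure-tilting hereditary ring is hereditary, so $\pd_R(I)\le 0$ for all ideals $I$, hence $\operatorname{gl.dim}R\le 1$ and the claim is immediate) or the closure of $\mathscr{T}$ under syzygies in the coherent setting. If the hereditary structure is available, the conclusion $\id_R(M)\le 1$ in fact holds for every module $M$, not only the $\mathscr{T}$-injective ones.
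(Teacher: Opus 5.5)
There is a genuine gap, caused by an index error in your first dimension shift. Applying $\Hom_R(-,N)$, with $N=E(M)/M$, to $0\to I\to R\to R/I\to 0$ does not give $\Ext^1_R(R/I,N)\cong\Ext^1_R(I,N)$. Since $R$ is projective, the long exact sequence gives $\Ext^1_R(I,N)\cong\Ext^2_R(R/I,N)$, while $\Ext^1_R(R/I,N)$ is only the cokernel of $\Hom_R(R,N)\to\Hom_R(I,N)$. For example, with $R=\mathbb{Z}$, $I=2\mathbb{Z}$, $N=\mathbb{Z}$ one has $\Ext^1(\mathbb{Z}/2,\mathbb{Z})\neq 0=\Ext^1(2\mathbb{Z},\mathbb{Z})$.

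Because of this you end up needing $\Ext^2_R(I,M)=0$, one degree too high. Neither route you propose for it is available:
\begin{itemize}
\item There is no reason the syzygy $K$ of a tilting ideal should again be tilting. Pure projectivity of $K$ over a coherent ring says nothing about the tilting axioms.
\item The paper shows that pure-tilting hereditary rings are hereditary only under extra hypotheses (commutative, Theorem~\ref{2.2}; von Neumann regular, Theorem~\ref{2.3}), not in general.
\end{itemize}
So as written, the argument stops exactly at the step you flag as the main obstacle.

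The obstacle disappears if you shift in the other order:
\begin{itemize}
\item Applying $\Hom_R(R/I,-)$ to the envelope sequence gives $\Ext^1_R(R/I,E(M)/M)\cong\Ext^2_R(R/I,M)$, since $E(M)$ is injective.
\item Applying $\Hom_R(-,M)$ to $0\to I\to R\to R/I\to 0$ gives $\Ext^2_R(R/I,M)\cong\Ext^1_R(I,M)$, since $R$ is projective.
\item This last group vanishes because $I\in\mathscr{T}$ and $M$ is $\mathscr{T}$-injective, which you already observed.
\end{itemize}
Baer's criterion then makes $E(M)/M$ injective, so $\id_R(M)\le 1$. This is exactly the paper's argument: it notes $\Ext^1_R(I,A)\cong\Ext^2_R(R/I,A)$, uses $\mathscr{T}$-injectivity to kill the left side for every ideal $I$, and concludes $\id_R(A)\le 1$. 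No hereditary structure and no closure of $\mathscr{T}$ under syzygies is needed.
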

\begin{proof}
 Suppose $A$ is $\mathscr{T}$-injective, that is, $\Ext_R^i(G, A) = 0$ for all $G \in \mathscr{T}$ and $i \geq 1.$ For any $I \subseteq R,$ consider an exact sequence $0 \rightarrow I \rightarrow R \rightarrow R/I \rightarrow 0.$ This implies that the exact sequence $0 \rightarrow \Ext_R^1(I, A) \rightarrow \Ext_R^2(R/I, A) \rightarrow 0.$ Therefore $\Ext_R^1(I, A) = 0$ since $R$ is pure hereditary and also all pure projective $n$-tilting modules. Hence $\id(A) \leq 1.$ 
\end{proof}

\begin{proposition}\label{6.3.1}
The class $\mathscr{T}^{\bot}$ of all $\mathscr{T}$-injective modules is closed under pure submodules.
\end{proposition}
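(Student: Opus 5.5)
Let $0 \rightarrow A \rightarrow B \rightarrow B/A \rightarrow 0$ be a pure exact sequence with $B \in \mathscr{T}^{\bot}$. We want to show $\Ext_R^1(T, A) = 0$ for every pure projective tilting module $T$. The plan is to compare the long exact $\Ext$-sequences in the first variable $T$, using that $T$ is pure projective.

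\emph{Step 1.} Apply $\Hom_R(T,-)$ to the sequence. This gives the exact sequence
\[
\Hom_R(T,B) \stackrel{\pi_\ast}{\longrightarrow} \Hom_R(T,B/A) \longrightarrow \Ext_R^1(T,A) \longrightarrow \Ext_R^1(T,B).
\]

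\emph{Step 2.} By Lemma~\ref{2.1}, $T$ is pure projective, hence projective with respect to pure exact sequences. Therefore $\pi_\ast$ is surjective. Concretely, write $T$ as a summand of $\bigoplus M_i$ with each $M_i$ finitely presented. The definition of purity gives surjectivity of $\Hom_R(M_i,B) \rightarrow \Hom_R(M_i,B/A)$ for each $i$. This surjectivity passes to the direct sum, since $\Hom$ converts sums to products, and then to the summand $T$.

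\emph{Step 3.} Since $B \in \mathscr{T}^{\bot}$, we have $\Ext_R^1(T,B)=0$. Combined with the surjectivity of $\pi_\ast$, exactness forces $\Ext_R^1(T,A)=0$. Hence $A \in \mathscr{T}^{\bot}$.

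This argument gives closure under pure submodules only for the defining condition $\Ext^1$. If $\mathscr{T}$-injectivity is read with all $\Ext^i$, $i\ge1$, as in the $^\perp$ notation of the Preliminaries, the argument must be extended to higher degrees. The long exact sequence gives $\Ext^{i-1}_R(T,B/A) \rightarrow \Ext^i_R(T,A) \rightarrow \Ext^i_R(T,B)=0$. So in degree $i\ge2$ one needs control of $\Ext^{i-1}(T,B/A)$. I would try to get this by dimension shifting along a projective resolution of $T$, whose syzygies are again pure projective when $T$ has finite projective dimension in the hereditary-type settings. This step is the main obstacle: the pure quotient $B/A$ need not itself lie in $\mathscr{T}^{\bot}$, so the higher-degree case is not automatic in general.

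For the statement as defined in this section, which uses $\Ext^1$ only, the proof is complete after Step 3.
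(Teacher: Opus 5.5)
Your proof is correct and is essentially the paper's argument: apply $\Hom_R(T,-)$ to the pure exact sequence $0\to A\to B\to B/A\to 0$, use pure projectivity of $T$ to get surjectivity of $\Hom_R(T,B)\to\Hom_R(T,B/A)$, and conclude $\Ext_R^1(T,A)=0$ from $\Ext_R^1(T,B)=0$. Your explicit justification of that surjectivity via direct sums of finitely presented modules is a welcome addition. Your caveat is also accurate: the argument only covers the $\Ext^1$ definition of $\mathscr{T}$-injectivity used in this section, which is the only case the paper's proof treats as well.
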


\begin{proof}
Let $A$ be a pure submodule of a $\mathscr{T}$-injective module $G$. Then there is a pure exact sequence $0 \rightarrow A \rightarrow G \rightarrow M/A \rightarrow 0$ and a functor $\Hom_R(T, -)$ preserves this sequence is exact whenever $G \in \mathscr{T}.$ This implies that the sequence $0 \rightarrow \Hom_R(T, A) \rightarrow \Hom_R(T, G) \rightarrow \Hom_R(T, G/A) \rightarrow \Ext_R^1(T, A) \rightarrow 0$ is also exact for all $T \in \mathscr{T}$. It follows that $\Ext_R^1(T, A) = 0$ for all $T \in \mathscr{T}$. 
\end{proof}

Clearly, the class of all $\mathcal{T}$-injective modules is closed under direct products. By the above Proposition \ref{6.3.1}, the class of all $\mathscr{T}$-injective modules is closed under pure submodules. By \cite[Corollary ~6.5]{PP:2005}, pure projective tilting module is a direct sum of finitely presented modules over a hereditary noetherian ring. By \cite[Lemma ~3.1.6, Lemma ~3.1.10]{tri1}, we immediately get the following remark.

\begin{remark}
Let $R$ be a  hereditary noetherian ring and let $\mathcal{T}$ be the class of all classical pure projective tilting modules. Then the class of all $\mathscr{T}$-injective modules is definable. Consequently, a module is $\mathscr{T}$-injective if and only if its pure injective envelope is $\mathscr{T}$-injective.
\end{remark}

\begin{proposition}\label{4.2.0}
Let $R$ be a  pure-tilting hereditary ring. Then the class $\mathscr{T}^{\bot}$ of all $\mathscr{T}$-injective modules is coresolving.
\end{proposition}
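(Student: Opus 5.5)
To show that $\mathscr{T}^{\bot}$ is coresolving, I will verify the three defining properties in order. First, $\mathscr{T}^{\bot}$ contains every injective module. Second, it is closed under extensions. Third, it is closed under cokernels of monomorphisms: if $0\to A\to B\to C\to 0$ is exact with $A,B\in\mathscr{T}^{\bot}$, then $C\in\mathscr{T}^{\bot}$.

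The first two properties are routine. Injectives lie in $\mathscr{T}^{\bot}$ because $\Ext_R^1(T,E)=0$ for every $T$ whenever $E$ is injective; this was already noted in the remark following the definition. For extensions, take $0\to A\to B\to C\to 0$ with $A,C\in\mathscr{T}^{\bot}$ and apply $\Hom_R(T,-)$ for $T\in\mathscr{T}$. The long exact sequence contains the segment $\Ext_R^1(T,A)\to\Ext_R^1(T,B)\to\Ext_R^1(T,C)$, whose outer terms vanish, so $\Ext_R^1(T,B)=0$.

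The third property is where the pure-tilting hereditary hypothesis is needed. Take $0\to A\to B\to C\to 0$ with $A,B\in\mathscr{T}^{\bot}$. The long exact sequence gives
\[
\Ext_R^1(T,B)\to\Ext_R^1(T,C)\to\Ext_R^2(T,A),
\]
so it suffices to show $\Ext_R^2(T,A)=0$ for all $T\in\mathscr{T}$. I will follow the convention used in the proof of Proposition~\ref{PROP:4.4}, where $\mathscr{T}$-injectivity is read as $\Ext_R^i(G,A)=0$ for all $i\ge 1$. Under that convention, Proposition~\ref{PROP:4.4} applies and gives $\id(A)\le 1$ over a pure-tilting hereditary ring. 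Then $\Ext_R^2(-,A)$ vanishes identically, so $\Ext_R^1(T,C)=0$ and $C\in\mathscr{T}^{\bot}$.

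The main obstacle is justifying the vanishing of $\Ext_R^2(T,A)$. If $\mathscr{T}^{\bot}$ is understood only through $\Ext^1$, I would argue directly by dimension shifting along $0\to I\to R\to R/I\to 0$. Every ideal $I$ lies in $\mathscr{T}$, so $\Ext_R^1(I,A)=0$, and hence $\Ext_R^2(R/I,A)\cong\Ext_R^1(I,A)=0$ for all ideals $I$. By a Baer-type criterion in degree $2$, this gives $\id(A)\le 1$ and hence $\Ext_R^2(T,A)=0$. The criterion itself is obtained by applying Baer's criterion to a cosyzygy $\Omega^{-1}A$, since $\Ext_R^1(R/I,\Omega^{-1}A)\cong\Ext_R^2(R/I,A)$.

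If this step fails, the fallback is to use the fact, coming from the tilting hypothesis, that $\pd(T)\le 1$ for the relevant $T$. For $1$-tilting modules this kills $\Ext_R^2(T,-)$ directly and is enough for the pure $1$-tilting case.
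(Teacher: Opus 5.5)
Your proposal is correct and follows essentially the same route as the paper. The paper also starts from $0\to M_1\to M_2\to M_3\to 0$, uses the segment $\Ext_R^i(G,M_2)\to\Ext_R^i(G,M_3)\to\Ext_R^{i+1}(G,M_1)$, and kills the outer terms via Proposition~\ref{PROP:4.4} ($\id(M_1)\le 1$); your checks that injectives lie in $\mathscr{T}^{\bot}$ and that the class is closed under extensions, together with your dimension shift along $0\to I\to R\to R/I\to 0$ plus Baer's criterion on a cosyzygy, simply make explicit what the paper leaves implicit or delegates to Proposition~\ref{PROP:4.4}.
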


\begin{proof}
Let $0 \rightarrow M_{1} \stackrel{\phi}{\rightarrow} M_{2} \stackrel{\psi}{\rightarrow} M_{3} \rightarrow 0$ be an exact sequence of left $R$-modules with $M_{1}, M_{2} \in  \mathscr{T}^{\bot}$. Let $G \in \mathscr{T}.$ Then the associated the long exact sequence of $\Ext$ groups yields, for each $i\geq 1,$ $\Ext_R^i(G, M_2) \rightarrow \Ext_R^i(G, M_3) \rightarrow \Ext_R^{i+1}(G, M_1)$ is exact. By Proposition \ref{PROP:4.4} and the definition of $\mathscr{T}$-injective modules, we have, $\Ext_R^i(G, M_1) = \Ext_R^i(G, M_2) = 0$ for all $i \geq 1$ and all $G \in \mathscr{T}.$ It follows immediately from the above exact sequence that $\Ext_R^i(G, M_3) = 0$ for all $i \geq 1$ and all $G \in \mathscr{T}$. Hence $M_{3}$ is $\mathscr{T}$-injective. This completes the proof.
\end{proof}

\begin{proposition}\label{4.2-1}
Let $R$ be a  pure-tilting hereditary ring. Then every factor module of an $\mathscr{T}$-injective $R$-module by a pure submodule is $\mathscr{T}$-injective.
\end{proposition}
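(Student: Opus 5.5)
Let $G$ be $\mathscr{T}$-injective and let $A \subseteq G$ be a pure submodule. The plan is to show that $G/A$ is $\mathscr{T}$-injective by using the pure exact sequence
\[
0 \rightarrow A \rightarrow G \rightarrow G/A \rightarrow 0
\]
and the long exact sequence of $\Ext$ obtained by applying $\Hom_R(T,-)$ for each $T\in\mathscr{T}$. The relevant segment is
\[
\Ext_R^1(T,G)\rightarrow \Ext_R^1(T,G/A)\rightarrow \Ext_R^2(T,A)\rightarrow \Ext_R^2(T,G).
\]
The outer terms are controlled by the hypothesis on $G$, so everything reduces to understanding $A$.

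The first step is to show that $A$ is $\mathscr{T}$-injective. This is exactly Proposition \ref{6.3.1}: the class $\mathscr{T}^{\bot}$ is closed under pure submodules. Hence $\Ext_R^1(T,A)=0$ for all $T\in\mathscr{T}$.

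The second step kills $\Ext_R^2(T,A)$. Since $R$ is pure-tilting hereditary, Proposition \ref{PROP:4.4} gives $\id(A)\le 1$ for every $\mathscr{T}$-injective module $A$. Therefore $\Ext_R^2(T,A)=0$, and likewise all higher $\Ext_R^{i}(T,A)$ with $i\ge 2$ vanish. Together with $\Ext_R^1(T,G)=0$, the displayed segment forces $\Ext_R^1(T,G/A)=0$ for every $T\in\mathscr{T}$, so $G/A\in\mathscr{T}^{\bot}$.

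An alternative, and perhaps cleaner, route is to invoke Proposition \ref{4.2.0} directly. Both $A$ and $G$ lie in $\mathscr{T}^{\bot}$ (the former by Proposition \ref{6.3.1}), and that proposition shows the cokernel of such a short exact sequence is again $\mathscr{T}$-injective. The conclusion follows at once.

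The main point to check is that Proposition \ref{PROP:4.4} genuinely applies to $A$. This requires $A$ to be $\mathscr{T}$-injective in the sense used there (vanishing of $\Ext^1$, with the higher vanishing deduced from $\id(A)\le1$), and this is supplied by the pure-submodule closure of Proposition \ref{6.3.1}. Purity of $A$ in $G$ is used only there; the rest is the long exact sequence.
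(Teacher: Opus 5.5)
Your proof is correct and is essentially the paper's: the paper argues exactly as in your alternative route, obtaining $A\in\mathscr{T}^{\bot}$ from Proposition~\ref{6.3.1} and then concluding by the coresolving property in Proposition~\ref{4.2.0}. Your main route just inlines the proof of Proposition~\ref{4.2.0}, using Proposition~\ref{PROP:4.4} to get $\Ext_R^2(T,A)=0$.
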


\begin{proof}
 Let $A$ be a pure submodule of $\mathscr{T}$-injective $R$-module $B.$ By Proposition \ref{6.3.1} and Proposition \ref{4.2.0}, $B/A$ is $\mathscr{T}$-injective.
\end{proof}

The study of preenvelopes and precovers originated in the work of E. Enochs, who introduced these concepts in connection with approximation theory and relative homological algebra. The existence of preenvelopes plays an important role in relative homological algebra and approximation theory. The following theorem establishes the existence of $\mathcal{T}$-injective preenvelopes in our present setting.

\begin{theorem}\label{6.3.2}
Let $R$ be a  pure-tilting hereditary ring. Then every $R$-module has an $\mathscr{T}$-injective preenvelope. \end{theorem}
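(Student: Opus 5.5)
The plan is to obtain preenvelopes from the closure properties of $\mathscr{T}^{\bot}$, using the standard criterion that a class closed under direct products and pure submodules is preenveloping. Two of these properties are already available. The class $\mathscr{T}^{\bot}$ is closed under direct products, since $\Ext_R^1(T,\prod M_i)\cong\prod\Ext_R^1(T,M_i)$. It is closed under pure submodules by Proposition~\ref{6.3.1}. The argument then runs through a cardinality bound, in the spirit of Enochs' and Rada--Saorín's construction.

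\textbf{Key steps.}
\begin{enumerate}
\item Fix a module $M$. Find a cardinal $\kappa$, depending only on $|M|$ and $|R|$, with the following property: every homomorphism $f\colon M\to G$ with $G\in\mathscr{T}^{\bot}$ factors through a pure submodule $G'\subseteq G$ containing $f(M)$ with $|G'|\le\kappa$. This is the standard Löwenheim--Skolem-type fact that any subset $S$ of a module lies in a pure submodule of cardinality at most $|S|+|R|+\aleph_0$.
\item Such a $G'$ is again $\mathscr{T}$-injective by Proposition~\ref{6.3.1}.
\item Choose a set of representatives $\{(G_j,f_j)\}_{j\in J}$ for the isomorphism classes of pairs in which $G_j\in\mathscr{T}^{\bot}$, $|G_j|\le\kappa$, and $f_j\colon M\to G_j$. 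This is a set because of the cardinality bound.
\item Define $\varphi=(f_j)_j\colon M\to\prod_j G_j$. The target lies in $\mathscr{T}^{\bot}$ by closure under products.
\item To check that $\varphi$ is a preenvelope, take any $f\colon M\to G$ with $G\in\mathscr{T}^{\bot}$. It factors as $M\to G'\hookrightarrow G$ with $(G',f)\cong(G_j,f_j)$ for some $j$. Then the composite of the projection onto $G_j$, the isomorphism to $G'$ and the inclusion into $G$ gives the required factorization through $\varphi$.
\end{enumerate}

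The pure-tilting hereditary hypothesis is used only to place us in the setting of the earlier results. By Remark~\ref{REM:3.8} the ring is coherent, and Propositions~\ref{4.2.0} and~\ref{4.2-1} show the class is coresolving and closed under pure quotients. This makes $\mathscr{T}^{\bot}$ close to definable, but closure under products and pure submodules alone suffices for preenveloping. If one prefers, one can instead argue via \cite[Lemma~3.1.6, Lemma~3.1.10]{tri1} as in the preceding remark, or view $\mathscr{T}^{\bot}$ as $\mathcal{S}^{\perp}$ for a suitable set $\mathcal{S}$ and apply Eklof--Trlifaj to get even a special preenvelope.

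\textbf{Main obstacle.} The delicate point is that $\mathscr{T}$ is a proper class, so one might worry whether $\mathscr{T}^{\bot}$ is controlled by a set. The argument above avoids this entirely, because it only needs closure under products and pure submodules, not a generating set. The step needing care is step 1: the pure submodule $G'$ must be built by a countable iteration that adds solutions of finite linear systems with parameters in the current subset. I would cite this standard construction rather than reprove it.
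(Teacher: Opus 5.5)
Your proposal is correct and is essentially the paper's proof: both use closure of $\mathscr{T}^{\bot}$ under direct products and under pure submodules (Proposition~\ref{6.3.1}), together with the cardinality bound of \cite[Lemma~5.3.12]{enochs}. The paper then quotes the preenvelope criterion from Enochs--Jenda \cite{enochs}, while you write out the product-of-representatives construction behind that criterion. As you observe, neither argument actually uses the pure-tilting hereditary hypothesis.
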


\begin{proof}
Let $\mathcal{C}=\mathcal{T}^{\perp}$ be the class of all $\mathcal{T}$-injective $R$-modules. By \cite[Lemma ~5.3.12]{enochs}, there exists a cardinal $\aleph_{\alpha}$ such that for every homomorphism $\phi : M \to G,$ where $G\in\mathcal{C}$, there exists a pure submodule $A\subseteq G$ satisfying $\phi(M)\subseteq A \quad\text{and}\quad |A|\leq\aleph_{\alpha}.$

Since $\mathcal{C}$ is closed under arbitrary direct products and, by Proposition 4.5, pure submodules of modules in $\mathcal{C}$ again belong to $\mathcal{C}$, it follows that $A\in\mathcal{C}$. Hence every morphism $\phi:M\to G$ factors through some module $A\in\mathcal{C}$ with cardinality at most $\aleph_{\alpha}$.

Therefore, the class $\mathcal{C}$ satisfies the assumptions of [9, Proposition 6.2.1]. Consequently, $\mathcal{C}$ is preenveloping. Equivalently, every $R$-module admits a $\mathcal{T}$-injective preenvelope.
\end{proof}

\begin{remark} \label{REMK:5.1}
Let $R$ be a  pure-tilting hereditary ring. Then, by Theorem \ref{6.3.2}, every module has a special  $\mathscr{T}^\bot$-preenvelope. By \cite[Salce Lemma ~2.2.6]{tri1}, every module has a $^\bot(\mathscr{T}^\bot)$-precover.
\end{remark}
\begin{remark}\label{REMK:5.2}
Let $R$ be a von Neumann regular ring. Then every $R$-module is $\mathscr{T}$-injective. Further, every pure-tilting hereditary ring is hereditary.
\end{remark}

\begin{proof}
 By Theorem \ref{6.3.2}, we consider a short exact sequence $0 \rightarrow M \rightarrow T(E) \rightarrow T(E)/M \rightarrow,$ where $T(E)$ is an $\mathscr{T}$-injective preenvelope of $M.$ This sequence is a pure exact sequence since $R$ is von Neumann regular. It follows that $\Hom_R(T^\prime, T(E) \rightarrow \Hom_R(T^\prime, T(E)/M)) \rightarrow 0$ is surjective for all pure projective tilting $R$-modules $T^\prime.$ Consequently, $\Ext_R^1(T^\prime, M) = 0.$ Thus, Every $R$-module is $\mathscr{T}$-injective and hence $\mathscr{T}^{\bot} = R \mbox{-} \Mod.$ It follows that every pure projective tilting module is projective. Therefore, every pure-tilting hereditary ring is hereditary. 
\end{proof}

The converse statement in Remark \ref{REMK:5.2} is valid provided that the class of pure classical tilting modules coincides with the class of finitely generated modules.

\begin{proposition}
 Let $R$ be a  pure-tilting hereditary ring. If every $R$-module has a special $\mathscr{T}$-injective cover, then $R$ is $\mathscr{T}$-injective.
\end{proposition}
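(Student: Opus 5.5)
The plan is to read the conclusion ``$R$ is $\mathscr{T}$-injective'' literally: $\Ext_R^1(T,R)=0$ for every pure projective tilting module $T$. The route goes through the special cover of a suitable module, specifically the left regular module $R$ itself, or more generally an arbitrary module $M$. By hypothesis $M$ has a special $\mathscr{T}^{\bot}$-cover. So there is an exact sequence
\[
0\longrightarrow K\longrightarrow G\stackrel{f}{\longrightarrow} M\longrightarrow 0
\]
with $G\in\mathscr{T}^{\bot}$ and $K\in{}^{\bot}(\mathscr{T}^{\bot})$. (This reads ``special cover'' by the usual convention for special precovers, with the kernel in the left orthogonal, dual to Remark~\ref{REMK:5.1}.)

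First, apply this with $M=R$. Since $R$ is projective, the epimorphism $f\colon G\to R$ splits. Hence $R$ is a direct summand of $G$.

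Next, show that $\mathscr{T}^{\bot}$ is closed under direct summands. This is immediate, since $\Ext_R^1(T,-)$ commutes with finite direct sums. It follows that $R\in\mathscr{T}^{\bot}$, i.e.\ $R$ is $\mathscr{T}$-injective. At this point the pure-tilting hereditary hypothesis is not yet needed. I would use it only to strengthen the conclusion, for instance to note via Proposition~\ref{PROP:4.4} that $\id(R)\le 1$ and via Proposition~\ref{4.2.0} that every module of finite $\mathscr{T}^{\bot}$-coresolution built from $R$ stays in $\mathscr{T}^{\bot}$.

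The main obstacle is interpretive rather than technical. If ``special $\mathscr{T}$-injective cover'' is instead meant with the kernel $K$ lying in $\mathscr{T}^{\bot}$ (a Wakamatsu-type condition), the splitting argument above still works unchanged, because projectivity of $R$ alone forces the splitting. So I would phrase the argument to depend only on the existence of some epimorphism $G\to R$ with $G\in\mathscr{T}^{\bot}$. The one point to check is that the cover map is surjective. For a special precover this is part of the definition. Otherwise surjectivity follows because, if injective modules lie in $\mathscr{T}^{\bot}$ (as noted in the Remark after the definition of $\mathscr{T}$-injectivity), then a precover of $R$ is surjective whenever every module is a quotient of a $\mathscr{T}^{\bot}$-module. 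That holds for instance when $R\text{-}\Mod$ has enough such objects, e.g.\ free modules over $R$ once $R\in\mathscr{T}^{\bot}$. Since that last step is circular, I would rely on the special-precover definition for surjectivity.
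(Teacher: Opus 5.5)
Your proof is correct, and it takes a different and shorter route than the paper's.

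\textbf{Your route.} You apply the hypothesis only to the module $R$. The special $\mathscr{T}^{\bot}$-cover $f\colon G\to R$ is an epimorphism onto a projective module, so it splits. Hence $R$ is a direct summand of $G\in\mathscr{T}^{\bot}$, and $\mathscr{T}^{\bot}$ is closed under direct summands because $\Ext_R^1(T,-)$ is additive.

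\textbf{The paper's route.} The paper takes a special cover $0\to K\to G\to S\to 0$ and asserts, without justification, that this sequence is pure exact. It then uses Proposition~\ref{6.3.1} (closure of $\mathscr{T}^{\bot}$ under pure submodules) to get $K\in\mathscr{T}^{\bot}$. Finally it uses Proposition~\ref{4.2.0} (the coresolving property, which is where the pure-tilting hereditary hypothesis enters) to get $S\in\mathscr{T}^{\bot}$.

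\textbf{Comparison.} If $S$ is meant to be arbitrary, the paper's route aims at the stronger conclusion that every module is $\mathscr{T}$-injective. But purity of a special precover is not available in general. For $S=R$ purity is automatic, precisely because the sequence splits, and then your direct-summand argument makes the detour through pure submodules and coresolving unnecessary. What your route buys is independence from both the purity claim and the pure-tilting hereditary hypothesis. It shows, over an arbitrary ring, that any projective module which is an epimorphic image of a $\mathscr{T}$-injective module is itself $\mathscr{T}$-injective.

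\textbf{A correction to your setup.} By the standard convention, a special $\mathcal{C}$-precover has its kernel in $\mathcal{C}^{\bot}$. So here $K\in(\mathscr{T}^{\bot})^{\bot}$, not ${}^{\bot}(\mathscr{T}^{\bot})$; the left orthogonal is where cokernels of special preenvelopes lie, as in Remark~\ref{REMK:5.1}. Likewise, Wakamatsu's lemma gives $\Ext_R^1(G',K)=0$ for all $G'\in\mathscr{T}^{\bot}$, not $K\in\mathscr{T}^{\bot}$. Since you never use the kernel condition, none of this affects the argument. Your final paragraph on surjectivity can be reduced to the single remark that special precovers are epimorphisms by definition.
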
 

\begin{proof}
  
 Assume that every module has a special $\mathscr{T}$-injective cover. We consider the following exact sequence $\epsilon \colon 0 \rightarrow K \rightarrow G \rightarrow S \rightarrow 0$, where $G$ is an $\mathscr{T}$-injective module and $K = \ker (K \rightarrow G).$ Clearly, $\epsilon$ is a pure exact sequence. By Proposition \ref{6.3.1}, $K$ is $\mathscr{T}$-injective, and therefore $S$ is $\mathscr{T}$-injective by Proposition \ref{4.2.0}.  
\end{proof}
 
\begin{theorem}
Let $R$ be a  pure-tilting hereditary ring. Then the following conditions are hold:
\begin{enumerate}
    \item Every factor module of an $\mathscr{T}$-injective module by an absolutely pure submodule is $\mathscr{T}$-injective.
    \item Every absolutely pure module is $\mathscr{T}$-injective.
    \item Every absolutely pure module has injective dimension at most $1.$
    \end{enumerate}
\end{theorem}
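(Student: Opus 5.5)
The plan is to prove (2) first, then obtain (1) and (3) from it together with results already established in this section.

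For (2), let $A$ be absolutely pure and embed it in its injective envelope, giving
\[0 \rightarrow A \rightarrow E(A) \rightarrow E(A)/A \rightarrow 0.\]
Because $A$ is absolutely pure, this sequence is pure exact. The module $E(A)$ is injective, so it is $\mathscr{T}$-injective. Proposition~\ref{6.3.1} says $\mathscr{T}^{\bot}$ is closed under pure submodules, hence $A \in \mathscr{T}^{\bot}$. This step uses neither the pure-tilting hereditary hypothesis nor any condition on the tilting modules; the pure-projectivity of the modules in $\mathscr{T}$ enters only through Proposition~\ref{6.3.1}.

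For (1), let $B$ be $\mathscr{T}$-injective and $A \subseteq B$ an absolutely pure submodule. An absolutely pure submodule is pure in any overmodule, so $0 \rightarrow A \rightarrow B \rightarrow B/A \rightarrow 0$ is pure exact. Proposition~\ref{4.2-1} then gives $B/A \in \mathscr{T}^{\bot}$ directly. Alternatively, $A$ is $\mathscr{T}$-injective by (2), and the coresolving property of Proposition~\ref{4.2.0} (closure under cokernels of monomorphisms between members) again gives $B/A \in \mathscr{T}^{\bot}$. Here the pure-tilting hereditary hypothesis is used, since Proposition~\ref{4.2.0} depends on it through Proposition~\ref{PROP:4.4}.

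For (3), combine (2) with Proposition~\ref{PROP:4.4}: every $\mathscr{T}$-injective module over a pure-tilting hereditary ring has injective dimension at most $1$, so every absolutely pure module does as well.

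The main obstacle I expect is in (3). Proposition~\ref{PROP:4.4} reads $\mathscr{T}$-injectivity as the vanishing of $\Ext^i_R(G,-)$ for all $i \geq 1$, whereas the definition only requires $\Ext^1_R$ to vanish. If that gap needs closing, I would argue directly. For each ideal $I$, the module $I$ is pure projective, and $\Ext^1_R(P,A)=0$ for any pure projective $P$ and absolutely pure $A$: apply $\Hom_R(P,-)$ to the pure exact sequence above and use Lemma~\ref{2.1}(3). Then the long exact sequence for $0 \rightarrow I \rightarrow R \rightarrow R/I \rightarrow 0$ gives $\Ext^2_R(R/I,A) \cong \Ext^1_R(I,A)=0$. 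By a Baer-type criterion for injective dimension this yields $\id(A) \leq 1$.
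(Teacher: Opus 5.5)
Your proposal is correct and is essentially the paper's argument: (1) uses the purity of absolutely pure submodules together with Proposition~\ref{4.2-1}, (2) uses Proposition~\ref{6.3.1}, and (3) uses the dimension shift $\Ext^2_R(R/I,A)\cong\Ext^1_R(I,A)=0$, which is exactly what the paper's proof and Proposition~\ref{PROP:4.4} do; that shift only needs $\Ext^1$-vanishing, so the gap you worried about does not arise. The only differences are that you embed $A$ in $E(A)$ rather than in the $\mathscr{T}$-injective preenvelope of Theorem~\ref{6.3.2}, so that theorem is not needed, and that you prove (1) for an arbitrary $\mathscr{T}$-injective $B$ rather than only for that preenvelope.
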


\begin{proof}
 $(1).$ Let $A$ be an absolutely pure module. By Theorem \ref{6.3.2}, there is an exact sequence $\eta: 0 \rightarrow A \rightarrow G \rightarrow G/A \rightarrow 0$ of $R$-modules with $A$ is absolutely pure and $G$ is $\mathscr{T}$-injective. The above sequence $\eta$ is pure exact since $A$ is absolutely pure. Hence $G/A$ is $\mathscr{T}$-injective by Proposition \ref{4.2-1}.
 
    $(2).$ By Proposition \ref{6.3.1}, $A$ is $\mathscr{T}$-injective from the sequence $\eta.$
 
    $(3).$ Consider an exact sequence $0 \rightarrow I \rightarrow R \rightarrow R/I \rightarrow 0.$ For any absolutely module $A,$ the sequence $0 \rightarrow \Ext_R^1(I, A) \rightarrow \Ext_R^2(R/I, A) \rightarrow 0$ is exact. From $(2), A$ is $\mathscr{T}$-injective and hence $\Ext_R^1(I, A) = 0$ since $R$ is pure-tilting hereditary . It follows that $\Ext_R^2(R/I, A) = 0$ since $R$ is pure-tilting hereditary. Hence, $\id_R(A) \leq 1.$
\end{proof}

\section{Gorenstein pure projective titling modules}

In this section, we introduce the Gorenstein pure projective tilting modules and discuss the main result of the section, the connection between Gorenstein pure projective tilting modules and strict stationary modules.

An $R$-module $M$ is called \textit{Gorenstein pure projective tilting} provided that $M$ occurs as a syzygy in an exact complex of projective modules 
\begin{equation}
   \mathrm{P}_\bullet \colon \cdots \rightarrow P_1 \rightarrow P_0 \rightarrow P^0 \rightarrow P^1 \cdots 
\end{equation}
which remains exact after applying the functor $\Hom_R(-, T)$ for all $T \in \mathscr{T}.$ 

 An immediate consequence is that, by symmetry of the complex $\mathrm{P}_\bullet$ arising from a complete projective resolution, every syzygy is a Gorenstein pure projective tilting module. It follows that all kernels and cokernels appearing in the complex are also Gorenstein pure projective tilting. Moreover, every projective module is, in particular, Gorenstein pure projective tilting. Furthermore, every Gorenstein projective module is Gorenstein pure projective $0$-tilting, though not necessarily Gorenstein pure projective tilting, since pure projective tilting modules are not, in general, projective.
We now ask when the converse statement holds. Suppose that all modules are $\mathscr{T}$-injective. Then $\mathscr{T}^\perp = R\text{-}\mathrm{Mod}$, and consequently every pure projective tilting module is projective. By Lemma~\ref{REMK:5.2}, over a von Neumann regular ring, every Gorenstein projective module is Gorenstein pure projective projective tilting. Hence, a natural question is under what conditions the converse implication is valid.
Suppose further that every projective module is a progenerator (and hence tilting). Under this assumption, the converse holds; that is, the class of pure projective tilting modules coincides with the class of projective modules. In particular, it is automatically satisfied over a simple Artinian ring, since every projective module is a generator in this setting. Consequently, we obtain the following result.

 \begin{proposition}
Let $R$ be a simple Artinian ring. Then an $R$-module $M$ is Goresntein pure projective tilting if and only if it is Goresntein projective. 
 \end{proposition}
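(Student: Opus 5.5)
Over a simple Artinian ring $R$, every module is semisimple, and hence projective and injective. I would first show that over such a ring the class $\mathscr{T}$ of pure projective tilting modules coincides with the class of nonzero projective modules, and then compare the two defining test classes.

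\textbf{Step 1: identify $\mathscr{T}$.} Since $R\cong M_n(D)$ for a division ring $D$, every module is a direct sum of copies of the unique simple module $S$. Hence every module is projective, and so pure projective. Every nonzero module $P$ generates $S$, and therefore generates $R$, because $R\cong S^{n}$ is a summand of $P^{(n)}$. So every nonzero module is a progenerator of $\Add$-type. It has projective dimension $0$, has no self-extensions, and admits the sequence $0\to R\to T_0\to 0$ with $T_0\in\Add(P)$. Thus every nonzero module is a ($0$-)tilting module, and $\mathscr{T}$ is exactly the class of nonzero projective modules. This is the situation described in the paragraph preceding the statement: every projective module is a progenerator.

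\textbf{Step 2: compare the definitions.} A module $M$ is Gorenstein pure projective tilting if it is a syzygy of an exact complex of projectives $\mathrm{P}_\bullet$ that stays exact under $\Hom_R(-,T)$ for all $T\in\mathscr{T}$. It is Gorenstein projective if the same holds with $T$ ranging over all projectives. By Step 1 the two test classes differ only by the zero module, for which $\Hom_R(\mathrm{P}_\bullet,0)=0$ is trivially exact. So the conditions are literally the same, and the equivalence follows.

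As a sanity check, both classes in fact equal $R$-$\Mod$. Every module $M$ is projective, so the complex $0\to M\xrightarrow{=}M\to 0$, which is split exact, exhibits $M$ as a syzygy. A split exact complex remains exact under any additive functor, in particular under $\Hom_R(-,T)$.

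\textbf{Main obstacle.} The only delicate point is Step 1. One must make sure the paper's definition of tilting, in particular condition (3), is met by every nonzero projective, and note that $0\notin\mathscr{T}$ is harmless. I would handle this by explicitly using semisimplicity to produce $R$ as a direct summand of $P^{(n)}$.
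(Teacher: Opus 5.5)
Your proof is correct and follows essentially the same route as the paper. The paper's justification is the paragraph just before the statement: over a simple Artinian ring every projective module is a generator and hence tilting, and every pure projective tilting module is projective, so the test classes in the two definitions coincide. Your version is slightly cleaner in two ways: you get projectivity of every module directly from semisimplicity rather than through $\mathscr{T}$-injectivity over von Neumann regular rings, and you correctly note that the zero module, which is projective but not tilting, does not affect the comparison.
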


By definition, we immediately obtain the following proposition, which will be used in the proof of the main theorem of this section.

\begin{proposition} \label{PRO:6.2}
    An $R$-module $M$ is Gorenstein pure projective tilting if and only if $M$ belongs to the left orthogonal class $^\perp\mathscr{T}$ and $M$ admits a co-proper right projective resolution. 
\end{proposition}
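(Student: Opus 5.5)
We prove the two implications of Proposition~\ref{PRO:6.2} by unpacking the definition of a Gorenstein pure projective tilting module. We split the complete complex $\mathrm{P}_\bullet$ at $M$ into two halves. The left half $\cdots\to P_1\to P_0\to M\to 0$ is an ordinary projective resolution of $M$. The right half $0\to M\to P^0\to P^1\to\cdots$ is a right projective resolution of $M$.

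Here ``co-proper'' means $\Hom_R(-,T)$-exact for every $T\in\mathscr{T}$. The guiding observation is that exactness of $\Hom_R(\mathrm{P}_\bullet,T)$ localizes to each half:
\begin{itemize}
\item on the left half it is equivalent to the vanishing of $\Ext_R^i(M,T)$;
\item on the right half it is exactly the co-properness condition.
\end{itemize}

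\emph{($\Rightarrow$).} Suppose $M$ is a syzygy of a complex $\mathrm{P}_\bullet$ that stays exact under $\Hom_R(-,T)$ for all $T\in\mathscr{T}$.
\begin{enumerate}
\item The left half is a projective resolution of $M$. The cohomology of $\Hom_R(P_\bullet,T)$ in positive degrees computes $\Ext_R^i(M,T)$.
\item Since the full complex $\Hom_R(\mathrm{P}_\bullet,T)$ is exact, these groups vanish for all $i\ge1$. Hence $M\in{}^\perp\mathscr{T}$.
\item For the right half, write $K^j$ for the kernels, with $K^0=M$. Each short exact piece $0\to K^j\to P^j\to K^{j+1}\to0$ consists of syzygies of $\mathrm{P}_\bullet$, so the $\Ext$-vanishing of the previous step also applies to every $K^{j+1}$.
\item Thus $\Hom_R(P^j,T)\to\Hom_R(K^j,T)$ is surjective for every $j$. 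Splicing these surjections shows that $0\to M\to P^0\to P^1\to\cdots$ is $\Hom_R(-,T)$-exact, i.e. co-proper.
\end{enumerate}

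\emph{($\Leftarrow$).} Suppose $M\in{}^\perp\mathscr{T}$ and $M$ has a co-proper right projective resolution.
\begin{enumerate}
\item Take any projective resolution of $M$ and splice it with the co-proper right resolution at $M$. This gives an acyclic complex $\mathrm{P}_\bullet$ of projectives having $M$ as a syzygy.
\item On the right half, $\Hom_R(-,T)$-exactness holds by co-properness.
\item On the left half, exactness at the positive-degree terms is equivalent to $\Ext_R^i(M,T)=0$ for $i\ge1$, which is exactly $M\in{}^\perp\mathscr{T}$.
\item At the junction, $\Hom_R(P^0,T)\to\Hom_R(M,T)\to\Hom_R(P_0,T)$ is exact. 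The first map is surjective by co-properness, and $\Hom_R(M,T)\to\Hom_R(P_0,T)$ is injective because $P_0\to M$ is an epimorphism. This gives exactness at the two terms adjacent to $M$.
\end{enumerate}

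\emph{Main obstacle.} The delicate point is the second half of ($\Rightarrow$). Co-properness of the right resolution needs surjectivity of $\Hom_R(P^j,T)\to\Hom_R(K^j,T)$ at every stage, not only at $M$ itself. That is why we first use the symmetry remark preceding the definition: every kernel and cokernel of $\mathrm{P}_\bullet$ is again Gorenstein pure projective tilting. This lets us apply the left-orthogonality argument uniformly to every $K^j$, after which the dimension-shifting check is routine.

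We also follow the paper's convention that the orthogonal in ${}^\perp\mathscr{T}$ is taken for all $i\ge1$, so the $\Ext$-vanishing is required in every positive degree.
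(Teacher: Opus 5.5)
Your proof is correct and is exactly the unpacking of the definition that the paper has in mind when it says the proposition follows immediately ``by definition'' (the paper gives no further argument). Splitting the complete complex at $M$, the left half gives $\Ext_R^{i}(M,T)=0$ for $i\geq 1$, and applying the same argument to each kernel $K^j$ gives co-properness of the right half; conversely, splicing any projective resolution with a co-proper right resolution recovers the complete complex. Your reading of ``co-proper'' as $\Hom_R(-,T)$-exactness for all $T\in\mathscr{T}$, which the paper leaves implicit, is the one under which the statement is literally a restatement of the definition. Your check at the junction terms $\Hom_R(P^0,T)$ and $\Hom_R(P_0,T)$ supplies the only detail the paper omits.
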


\begin{example}
     Let $R = \mathbb{Z}$ and let $T = \mathbb{Z}.$ Since  $\mathbb{Z}$ is projective and hence a pure projective $1$-tilting $\mathbb{Z}$-module, $\mathbb{Z}\in \mathscr{T}$. Now we consider the exact complex $\mathbf{P}_\bullet \colon \cdots \rightarrow 0 \rightarrow \mathbb{Z} \xrightarrow{1} \mathbb{Z} \rightarrow 0 \rightarrow \cdots$ which is exact and consists of projective modules. Applying the functor $\Hom_R(-, \mathbb{Z})$ preserves exactness because $\mathbb{Z}$ is projective. Hence $\mathbb{Z}$ is a trivial example for Gorenstein pure projective tilting module. 
 \end{example}

 \begin{example}
     Let $R=k[x]/(x^2)$, where $k$ is a field. We consider a simple module $S = R/(x).$ Since $R$ is a local non-semisimple Artin algebra, $S$ is not projective. Nevertheless, $S$ is Goresntein projective. Clearly, $S$ admits the complete projective resolution $\mathbf{P}_\bullet \colon \cdots \xrightarrow{x} R \xrightarrow{x} R \xrightarrow{x} R \xrightarrow{x} \cdots .$ Because $x^2 = 0,$ we have $\ker (x) = xR \cong R/(x) = S$ and $\im (x) = xR$ and hence the complex $\mathbf{P}_\bullet$ is exact. Now, let $T$ be any pure injective tilting module. Applying the functor $\Hom_R(-, T)$ to $\mathbf{P}_\bullet$ yield the complex $\cdots \xleftarrow{x} T \xleftarrow{x} T \xleftarrow{x} T \xleftarrow{x} T \leftarrow \cdots.$ Since $R$ is a self-injective Artin algebra, every totally acyclic projective complexes remains exact after applying $\Hom_R(-, M)$ for every module $M$ of finite projective dimension. In particular, this holds for tilting modules. Hence $\Hom_R(\mathbf{P}_\bullet, T)$ is exact. Consequently, $S = R/(x)$ is Gorenstein pure projectie tilting module, but it is not projective. 
 \end{example}

\begin{example}
    Consider the cyclic quiver $Q \colon 1 \xrightarrow{\alpha} 2 \xrightarrow{\beta} 1$ with relations $\alpha \beta = 0$ and $\beta \alpha = 0.$ Then $R = kQ/(\alpha \beta, \beta \alpha)$ is a self injective Nakayama Algebra. Denote by $S_1$ and $S_2$ the simple $R$-modules corresponding to the vertices $1$ and $2,$ respectively. Neither $S_1$ nor $S_2$ is projective.  Each simple module admits a periodic projective resolution. We consider a minimal projective resolution $\cdots \rightarrow P_1 \rightarrow P_2\rightarrow P_1 \rightarrow P_2 \rightarrow S_1 \rightarrow0$ and similarly for $S_2$. Since the resolutions are periodic, they extend naturally to bi-infinite exact complexes of projective modules $\mathbf{P}_\bullet \colon \cdots  P_1 \rightarrow P_2\rightarrow P_1 \rightarrow P_2 \rightarrow \cdots$ whose cycles module is precisely the simple modules $S_1$ and $S_2$. Hence $\mathbf{P}_\bullet$ is a totally acyclic complex of projective modules. Since $R$ is self-injective, $\mathbf{P}_\bullet$ remains exact under $\Hom_R(-, T)$ for every pure projective tilting module $T.$ Hence each simple module $S_i (i=1,2)$ is Gorenstein pure projective tilting.
\end{example}

We prove the following main theorem in the section

\begin{theorem}\label{MAIN:GOREN}
Let $\mathscr{T}$ be the class of all finitely presented tilting modules. Then every Gorenstein pure projective tilting module is Gorenstein flat if and only if every Gorenstein pure projective tilting module is a strict $\mathscr{T}$-stationary module. 
\end{theorem}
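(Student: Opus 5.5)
The plan is to translate both conditions into statements about the natural maps $\Phi^{(n)}$ of Proposition~\ref{P1}, evaluated on the members of $\mathscr{T}$. The key input is that, by Proposition~\ref{PRO:6.2}, a Gorenstein pure projective tilting module $M$ lies in ${}^\perp\mathscr{T}$, so $\Ext^n_R(M,T)=0$ for all $n\ge 1$ and $T\in\mathscr{T}$. For such $M$ and any divisible abelian group $D$, the target of $\Phi^{(n)}_M$ vanishes. Hence injectivity of $\Phi^{(n)}_M$ is equivalent to
\[
\Tor^R_n(\Hom_{\mathbb Z}(T,D),M)=0 .
\]
A second useful fact is that the class of Gorenstein pure projective tilting modules is closed under syzygies and cosyzygies in the defining complex $\mathrm P_\bullet$. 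So every such module is the $n$th syzygy of another such module, and in particular is the first syzygy of one.

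\emph{($\Rightarrow$)} Assume every Gorenstein pure projective tilting module is Gorenstein flat, and let $M$ be one. Write $M\cong\Omega^n(N)$ with $N$ Gorenstein pure projective tilting. The right module $\Hom_{\mathbb Z}(T,D)$ with $T\in\mathscr{T}$ and $D$ divisible should be handled as follows.
\begin{enumerate}
\item Since $T$ is finitely presented and $D$ is injective over $\mathbb Z$, rewrite $\Hom_{\mathbb Z}(T,D)\cong \Hom_{\mathbb Z}(T\otimes_R R,D)$ and reduce to character modules of finitely presented modules.
\item Observe that such character modules are pure-injective. Over the relevant rings they have finite flat-injective behaviour, so Tor against a Gorenstein flat module vanishes in positive degrees.
\end{enumerate}
With $\Tor^R_n(\Hom_{\mathbb Z}(T,D),N)=0$, the map $\Phi^{(n)}_N$ is injective. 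Proposition~\ref{P1} then says that $\Omega^n(N)=M$ is strict Mittag--Leffler over each $T\in\mathscr{T}$, i.e.\ $M$ is strict $\mathscr{T}$-stationary.

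\emph{($\Leftarrow$)} Assume every Gorenstein pure projective tilting module is strict $\mathscr{T}$-stationary, and let $M$ be one, with defining complex $\mathrm P_\bullet$. To show $M$ is Gorenstein flat I would show that $E\otimes_R\mathrm P_\bullet$ is exact for every injective right module $E$. This is equivalent to $\Tor^R_1(E,K)=0$ for every syzygy $K$ of $\mathrm P_\bullet$, since all such $K$ are again of our type.
\begin{enumerate}
\item Apply the hypothesis to $K$. Because $R\in\mathscr{T}$ is a finitely presented tilting module, $K$ is strict $R$-stationary.
\item By Lemma~\ref{P3}, $K$ is then $R^+=\Hom_{\mathbb Z}(R,\mathbb Q/\mathbb Z)$-Mittag--Leffler.
\item By Lemma~\ref{P2}, the map $\Tor^R_1((R^+)^\kappa,K)\to \Tor^R_1(R^+,K)^\kappa$ is injective for every cardinal $\kappa$.
\item Compute $\Tor^R_1(R^+,K)\cong \Ext^1_R(K,R)^+$ (flatness of $R^+$ would do, or Proposition~\ref{P1} with $N=R$). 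This vanishes since $K\in{}^\perp\mathscr{T}$ and $R\in\mathscr{T}$, so $\Tor^R_1((R^+)^\kappa,K)=0$.
\item Every injective right module $E$ is a direct summand of some $(R^+)^\kappa$, so $\Tor^R_1(E,K)=0$, as required.
\end{enumerate}

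The main obstacle is the step in ($\Rightarrow$) showing that $\Tor^R_n(\Hom_{\mathbb Z}(T,D),N)$ vanishes for Gorenstein flat $N$. Gorenstein flat modules are only guaranteed Tor-orthogonal to injective right modules, and $\Hom_{\mathbb Z}(T,D)$ is injective only when $T$ is flat. I would first try to resolve $T$ by its finite projective resolution of finitely generated projectives: $T$ is finitely presented tilting, hence of finite projective dimension with a finitely generated projective resolution. Dualizing this resolution gives a finite resolution of $\Hom_{\mathbb Z}(T,D)$ by the injective right modules $\Hom_{\mathbb Z}(R^m,D)$. Dimension shifting along it should then yield the vanishing of Tor against Gorenstein flat $N$ in all positive degrees.
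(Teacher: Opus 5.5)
Your forward implication follows the paper's argument. The paper also writes $M=\Omega^1(N)$ with $N$ a Gorenstein pure projective tilting, hence Gorenstein flat, cosyzygy. It uses $\pd_R T<\infty$ to get $\id_R(T^+)<\infty$, deduces $\Tor^R_i(T^+,N)=0$ for $i\ge 1$, and concludes with Proposition~\ref{P1}. In your version the operative step is the last paragraph: dualize a finite projective resolution of $T$ and dimension-shift. The pure-injectivity remarks in items (1)--(2) play no role. You also do not need the projectives to be finitely generated, since $\Hom_{\mathbb Z}(P,D)$ is an injective right module for every projective $P$ and every divisible $D$.

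The converse is where you take a genuinely different and shorter route. The paper fixes an arbitrary $T\in\mathscr{T}$ and uses strict $\Add(T)$-stationarity, Lemmas~\ref{P2}--\ref{P3} and Proposition~\ref{P1} to kill $\Tor$ against $\Prod(T^+)$. It then transports this to $(R^+)^\kappa$ through the dual $0\to T_r^+\to\cdots\to T_0^+\to R^+\to 0$ of the tilting coresolution of $R$. You instead observe that $R$ is itself a finitely presented ($0$-)tilting module, so the hypothesis applies with $T=R$ directly. This removes the tilting coresolution entirely, together with the dimension shift along its dual, which would in any case require vanishing of higher $\Tor$'s and not only $\Tor_1$. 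The paper's mechanism would only matter for a class that coresolves $R$ without containing it, which is not the situation here.

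Two small repairs are needed.
\begin{itemize}
\item Lemma~\ref{P2} and Proposition~\ref{P1} in degree $1$ are statements about $\Omega^1(K)$, not about $K$. So in steps 1--4 you should apply the hypothesis to $\Omega^1(K)$. This is harmless, since $\Omega^1(K)$ is again a syzygy of $\mathrm{P}_\bullet$ and hence of the same type.
\item In step 4, $R^+$ need not be flat. Proposition~\ref{P1} gives only injectivity of $\Tor^R_1(R^+,K)\to\Ext^1_R(K,R)^+$, not an isomorphism. Injectivity suffices, because the target vanishes.
\end{itemize}
In fact, Proposition~\ref{P1} with $N=R$ and arbitrary divisible $D$ already gives $\Tor^R_1(\Hom_{\mathbb Z}(R,D),K)=0$. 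Every injective right module is a direct summand of such a $\Hom_{\mathbb Z}(R,D)$, so your steps 2, 3 and 5 could be dropped altogether.
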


\begin{proof}
Let $M$ be a Gorenstein pure projective tilting module and let $T\in \mathcal T$. Since $M$ is Gorenstein pure projective, there exists a complete projective resolution $\mathbf P=\cdots \longrightarrow P_1 \longrightarrow P_0
\longrightarrow P_{-1}\longrightarrow \cdots$ such that $M \cong \Omega^1(\mathbf P) = \ker(P_0\to P_{-1}).$ Put $N=\ker(P_{-1}\to P_{-2})$. Then $M\cong \Omega^1(N)$ and $N$ is Gorenstein projective. By assumption, every Gorenstein pure projective tilting module is Gorenstein flat; hence $N$ is Gorenstein flat. Since $T$ is a tilting module, there exists an integer $n\geq 0$ such that $\pd_R(T)\leq n$. Therefore the character module $T^+=\Hom_{\mathbb Z}(T,\mathbb Q/\mathbb Z)$ satisfies $\id_R(T^+) \leq n.$
As $N$ is Gorenstein flat, we have $\Tor_i^R(T^+,N)=0 \qquad \text{for all } i\geq 1.$
Hence, by Proposition~2.4, the syzygy module $M = \Omega^1(N)$ is strict $\mathcal T$-stationary. Therefore, every Gorenstein pure projective tilting module is strict $\mathcal T$-stationary. Conversely, suppose that every Gorenstein pure projective tilting module is strict $\mathcal T$-stationary. Let $\mathbf{P}_\bullet \colon \cdots  P_1 \rightarrow P_2\rightarrow P_{-1} \rightarrow P_{-2} \rightarrow \cdots$ be a complete projective resolution. The $i^{th}$-syzygy of the complex $\Omega^i(\mathbf{P}_\bullet)$ is $\ker(P_i \rightarrow P_{i-1}).$ Assume that every syzygy $\Omega^i(\mathbf{P}_\bullet)$ is Gorenstein pure projective tilting. By hypothesis, each $\Omega^i(\mathbf{P}_\bullet)$ is therefore a strict $\mathscr{T}$-stationary module. We now show that each $i,$ $\Omega^i(\mathbf{P}_\bullet)$ is Gorenstein flat. Equivalently, it suffices to prove that the complex $E \otimes \mathbf{P}_\bullet$ is exact for every injective $R$-module $E.$ By standard homological criteria, this is equivalent to showing that $\Tor^R_1(E, \Omega^i(M)) = 0$ for every $i \geq 1.$ Since all syzygies occurring in a complete projective resolution satisfy the same homological properties, it is enough to verify the assertion for a single syzygy, say $\Omega^i(\mathbf{P}_\bullet).$ Consequently proving $\Tor^R_1(E, \Omega^i(\mathbf{P}_\bullet)) = 0$ implies the corresponding vanishing for every $\Omega^i(\mathbf{P}_\bullet),$ and hence establishes that all syzygies are Gorenstein flat. Fix $i \in \mathbb{Z}.$ By \cite[Corollary~8.5]{LH:2008}, $\Omega^i(\mathbf P)$ is strict $\Add(T)$-stationary. Hence, by Proposition~\ref{P1}, for every $T'\in \Add(T)$, the canonical map
\[
\Phi_{\Omega^i(\mathbf{P}_\bullet)}^{(i)}:
\Tor_i^R(\Hom_R(T',\mathbb Q/\mathbb Z),\Omega^i(\mathbf{P}_\bullet)))
\longrightarrow
\Hom_{\mathbb Z}(\Ext_R^i(\Omega^i(\mathbf{P}_\bullet)),T'),\mathbb Q/\mathbb Z)
\]
is injective. Moreover, every module in $\Add (T)$ is a pure projective tilting. Hence, $\Ext_R^m(\Omega^i(\mathbf{P}_\bullet)), T^\prime) = 0$ for all $m \geq 1$ by Proposition \ref{PRO:6.2}. Hence, $\Ext_R^m(\Omega^i(\mathbf{P}_\bullet)), T^\prime) = 0$ for all $m \geq 1$ and all $T^\prime \in \Add (T).$ On the other hand, since $T$ is tilting, there are $r \geq 0$ and a long exact sequence \[0 \longrightarrow R \longrightarrow T_0 \longrightarrow T_1 \longrightarrow \cdots \longrightarrow T_r \longrightarrow 0,\] where $T_i \in \Add (T)$ for all $i \leq r.$ Applying the character module functor $(-)^+$ yields the exact sequence 
\begin{equation} \label{eq:dual-sequence}
    0 \longrightarrow T_r^+ \longrightarrow T_{r-1}^+ \longrightarrow \cdots \longrightarrow T_0^+ \longrightarrow R^+ \longrightarrow 0,
\end{equation}
where $T_m^+ \in \Prod (T^+)$ for each $0 \leq m \leq r.$ Furthermore, for any cardinal $\kappa,$ we have \[\Tor^R_i((T^+)^\kappa, \Omega^i(\mathbf{P}_\bullet)) \cong \Tor^R_i((T^{(\kappa)})^+, \Omega^i(\mathbf{P}_\bullet)) = 0.\] Therefore, since $T_m^+ \in \Prod (T^+)$ for every $0 \leq m \leq r,$ it follows that $\Tor^R_1(T_m^+, \Omega^i(\mathbf{P}_\bullet)) = 0.$ Taking $\kappa$-fold products in \eqref{eq:dual-sequence}, we obtain the exact sequence \[0 \longrightarrow (T_r^+)^\kappa \longrightarrow (T_{r-1}^+)^\kappa \longrightarrow \cdots \longrightarrow (T_0^+)^\kappa \longrightarrow (R^+)^\kappa \longrightarrow 0,\] Since $\Omega^i(\mathbf{P}_\bullet))$ is a strict $\mathscr{T}$-stationary module, it follows from Lemma \ref{P3} that $\Omega^i(\mathbf{P}_\bullet))$ is a $\mathcal{C}$-Mittag-Leffler module, where $\mathcal{C}$ is the class of all cotilting modules (that is, character modules of pure projective tilting modules). It is clear that $\Omega^i(\mathbf P)$ is also a $\Prod (T^+)$-Mittag-Leffler module. This shows that $\Omega^i(\mathbf P)$ is $T_m$-Mittag-Leffler module for any $0 \leq m \leq r.$ By Lemma \ref{P2}, we have \[\eta \colon \Tor^R_{i+1}((T_m^+)^\kappa, \Omega^i(\mathbf P)) \cong (\Tor^R_{i+1}(T_m^+, \Omega^i(\mathbf P))^{(\kappa)} = 0.\] is injective. Hence, $\Tor^R_1((T_m^+)^\kappa = 0$ for any $0 \leq m \leq r.$ By the definition of Gorenstein pure projective tilting modules, it is easy to check that \[\Tor^R_1((R^+)^\kappa, \Omega^i(\mathbf P)) \cong \Tor^R_1((T_r)^\kappa, \Omega^i(\mathbf P)) = 0.\] Since any injective $R$-module is a direct summand of a direct product of copies of $R^+,$ we have that $\Tor^R_1(E, \Omega^i(\mathbf P)) = 0$ for any injective $R$-module $E.$ This proves that every Gorenstein pure projective tilting module is Gorenstein flat. 
\end{proof}

\section*{acknowledgment}
    The author gratefully acknowledges the financial support provided by the NBHM grant (No. 02011/14/2025 NBHM (R. P)/R\&D II/2172) and partial support from the Prof. T. R. Rajagopalan Start-up Grant (SASTRA-TRR-SASHE-2-25012025). The author also thanks Institut des Hautes Études Scientifiques (IHES), Université Paris-Saclay, for providing a stimulating research environment during the preparation of this work, and the European Mathematical Society for supporting the visit to IHES. The author is sincerely grateful to Prof. Keller for his valuable suggestions and insightful comments, which significantly improved the quality of this article.


\begin{thebibliography}{99}

\bibitem{and}
F. ~W. Anderson \and K. ~R. Fuller, {\it Rings and Categories of Modules}, Springer-Verlag, New York, 2nd ed. (1992).

  

\bibitem{LA}
L. ~Angeleri H\"ugel, {\it On some precovers and preenvelopes}, 2000.

\bibitem{LH:2008}
L. ~Angeleri H\"ugel \and D. ~Herbera, Mittag-Leffler conditions on modules, {\it Indiana Univ. Math. J.}, 57 (2008) 2459--2517.

\bibitem{ASS:2010}
I. ~Assem,  A. ~Skowronski \and D. ~Simson, Elements of the Representation Theory of Associative Algebras, {\it London Math. Soc.}, 65 (2010).

\bibitem{BH} 
 S. Bazzoni, I. Herzog, P. Příhoda, J. Šaroch, \and J. Trlifaj, Pure projective tilting modules, {\it  Documenta Math.},  25 (2020) 401--424.

\bibitem{Ding} 
N. ~Q. Ding, On envelopes with the unique mapping property, {\it Comm. Algebra}, 24(4) (1996) 1459--1470.

\bibitem{ET:2011}
 I. ~Emmanouil \and O. ~Talelli, On the flat length of injective modules,{\it J. Lond. Math. Soc.}, 84 (2011) 408--432.

\bibitem{Eno0}
E. ~E. ~Enochs, A note on absolutely pure modules, {\it Canad. Math. Bull.}, 19 (3), (1976) 361--362.

\bibitem{Eno1} 
E. ~E. ~Enochs, Injective and flat covers, envelopes and resolvents, {\it  Isrel J. Math.}, 39 (1981) 189--209.

\bibitem{enochs} 
E. ~E. ~Enochs \and O. ~M. ~G. ~Jenda, {\it Relative Homological Algebra}, de Gruyter Exp. Math. 30, Walter de Gruyter: Berlin-New York, (2011).

\bibitem{FS:1974}
J. ~W. Fisher \and R. ~L. Snider, On the Von Neumann Regularity of rings with regular prime fator rings, {\it Pacifi J. Math.}, 54(1) (1974) 135--144.

\bibitem{GJ:1975} 
 S. ~C. ~Goel, S. ~K. ~Jain \and S ~Singh, Rings whose cyclic modules are injective or projective, {\it Proc. Amer. Math. Soc.}, 53 (1) (1975) 16--18.

\bibitem{RGT:2006} 
R. ~G\"obel \and J. ~Trlifaj, {\it Approximations and Endomorphisms Algebras of Modules}, de Gruyter, Berlin, (2000).

\bibitem{HM:2004} 
H. ~Holm, Gorenstein homological dimensions, {\it J. Pure Appl. Algebra}, 189 (2004), 167--193.

\bibitem{KAP:1970}
I. ~Kaplansky, {\it Algebraic and Analytic Aspects of Operator Algebras}, Amer. Math. Soc, Providance, R. I., (1970).

\bibitem{LM:1999} 
T. ~Y. ~Lam, {\it Lectures on Modules and Rings}, Graduate Texts in Mathematics, 189, Springer-Verlag, New York, 1999.

\bibitem{mao1} 
L. ~Mao \and N. ~Q. ~Ding, $\mathcal{L}$-injective hulls of modules, {\it Bull. Aus. Math. Soc.}, 74 (2006) 37--44.

\bibitem{CM}
C. ~Megibben, Absolutely pure modules, {\it Proc. Amer. Math. Soc.}, 26, (1970) 561-566.

\bibitem{MD:2017}
A. ~Moradzadeh-Dehkordi \and S. ~H. ~Shojee,
Rings in which every ideal is pure-projective or FP-projective, {\it J. Algebra}, 478 (2017) 419--436.

\bibitem{MD}
A. ~Moradzadeh-Dehkordi,
On the structure of pure-projective modules and some applications, {\it J. Pure Appl. Algebra}, 221 (2017) 935--947.

\bibitem{OS:1971}
U. ~Oberst \and H. ~J. ~Schneider, Die Struktur von projektiven Moduln, {\it Invent. Math.}, 13 (1971) 295-304.

\bibitem{pru} 
H. ~Pr\"ufer, Studies on the dismantling of countable primary abelian groups, {\it Math. Zeit.}, 17(1) (1923) 35--61.

\bibitem{PP:2005} 
G. ~Puninski \and P. ~Rothmaler, pure projective modules, {\it J. London Math. Soc.}, 71(2) (2005) 304--320.

\bibitem{rot} 
J. ~J. ~Rotman, {\it An Introduction to Homological Algebra}, Academic Press, New York, (1979).

\bibitem{uma} 
A. ~Umamaheswaran, R. ~Udhayakumar, C. ~Selvaraj, K. ~Tamilvanan, \and M. ~J. ~Kabeto,  Existence of covers and envelopes of a left orthogonal class and its right orthogonal class of modules, {\it Journal of Mathematics}, 2022 (2023) 12 pages.

\bibitem{sim} 
D. ~Simson, Pure-periodic modules and a structure of pure-projective resolutions, {\it Pacific journal of Mathematics}, 207 (1) (2002) 235--256. 

\bibitem{stns} 
 B. ~Stenstr\"om, Coherent Rings and $\fp$-injective modules, {\it J. London Math. Soc.}, 2 (1970), 323--329.

\bibitem{BOS:1975}
B. ~Stenstr\"om, {\it Grothendieck Categories. In: Rings of Quotients. Die Grundlehren der mathematischen Wissenschaften}, Springer, Berlin, Heidelberg, 217 (1975).

\bibitem{tri1} 
J. Trlifaj, {\it Infinite dimensional tilting modules and cotorsion pairs}, Hand book of tilting theory, Lect. Notes, 322, London Math. Soc., Cambridge, (2007) 279--321.

\bibitem{WAR:1969}
  R.~B.~Warfield, Purity and algebraic compactness for modules, {\it Pacific. J. Math.}, 28 (1969) 699-719.

\bibitem{xu} 
 J. Xu, {\it Flat covers of Modules}, Lecture Notes in Mathematics. Vol 1634 Springer-Verlag, Germany, (1996).

\bibitem{lam}
T. ~Y. ~Lam, {\it Lectures on Modules and Rings}, Graduate Texts in
Math., 189 (Springer-Verlag, Berlin, New York, Heidelberg), (1999).
\bibitem{vam}
P. Vamos, Rings with duality, {\it Proc. London Math. Soc.}, {\bf 35}
(1977) 275--289.

\end{thebibliography}
\end{document}